\theoremstyle{plain}
\newtheorem{teo}{Theorem}[section]
\newtheorem{defi}{Definition}[section]
\newtheorem{cor}{Corollary}[section]
\newtheorem{lem}{Lemma}[section]
\newtheorem{prop}{Proposition}[section]
\newtheorem{obs}{Observation}[section]
\numberwithin{equation}{section}
\begin{document}

\title[Variable Gaussian Besov-Lipschitz \&   Triebel-Lizorkin spaces ] {Some results on variable Gaussian Besov-Lipschitz and variable  Gaussian Triebel-Lizorkin spaces.}

\author{Ebner Pineda}
\address{Escuela Superior Polit\'ecnica del Litoral. ESPOL, FCNM, Campus Gustavo Galindo Km. 30.5 V\'ia Perimetral, P.O. Box 09-01-5863, Guayaquil, ECUADOR.}
\email{epineda@espol.edu.ec}
\author{Luz Rodriguez}
\address{Escuela Superior Polit\'ecnica del Litoral. ESPOL, FCNM, Campus Gustavo Galindo Km. 30.5 V\'ia Perimetral, P.O. Box 09-01-5863, Guayaquil, ECUADOR.}
\email{luzeurod@espol.edu.ec}
\author{Wilfredo~O.~Urbina}
\address{Department of Mathematics, Actuarial Sciences and Economics, Roosevelt University, Chicago, IL,
   60605, USA.}
\email{wurbinaromero@roosevelt.edu}

\subjclass[2010] {Primary 42B25, 42B35; Secondary 46E30, 47G10}

\keywords{Hermite expansions, variable exponent, Besov-Lipschitz,  Triebel-Lizorkin ,  Gaussian measure.}

\begin{abstract}
In a previous paper \cite{Pinurb} two of the authors introduced and study  Gaussian  Besov-Lipschitz spaces  $B_{p,q}^{\alpha}(\gamma_{d})$ and Gaussian  Triebel-Lizorkin spaces $F_{p,q}^{\alpha}(\gamma_{d})$. Now,
in this paper we introduce the variable  Gaussian  Besov-Lipschitz spaces $B_{p(\cdot),q(\cdot)}^{\alpha}(\gamma_{d})$ and  the variable Gaussian  Triebel-Lizorkin spaces $F_{p(\cdot),q(\cdot)}^{\alpha}(\gamma_{d}),$ that is to say,  Gaussian  Besov-Lipschitz and Triebel-Lizorkin spaces with variable exponents, following  \cite{st1} and \cite{Pinurb} under certain additional regularity conditions on the exponents $p(\cdot)$ and $q(\cdot)$ introduced by Dalmasso and Scotto in  \cite{st1}. Trivially, they include the Gaussian  Besov-Lipschitz spaces  $B_{p,q}^{\alpha}(\gamma_{d})$ and Gaussian\\  Triebel-Lizorkin spaces $F_{p,q}^{\alpha}(\gamma_{d})$. We consider some inclusion relations of those spaces and finally we also prove some interpolation results for them.\end{abstract}

\maketitle

\section{Introduction and Preliminaries}
 Let us consider the Gaussian measure
\begin{equation}
\gamma_d(dx)=\frac{e^{-\|x\|^2}%
}{\pi^{d/2}} dx, \, x\in\mathbb{R}^d
\end{equation}
on  $\mathbb{R}^d$  and the Ornstein-Uhlenbeck
differential operator
\begin{equation}\label{OUop}
L=\frac12\triangle_x-\left\langle x,\nabla _x\right\rangle.
\end{equation}

Let $\nu=(\nu _1,...,\nu_d)$ be a
multi-index such that $\nu _i \geq 0, i= 1, \cdots, d$,  let $\nu
!=\prod_{i=1}^d\nu _i!,$ $\left| \nu \right| =\sum_{i=1}^d\nu _i,$ $%
\partial _i=\frac \partial {\partial x_i},$ for each $1\leq i\leq d$ and $%
\partial ^\nu =\partial _1^{\nu _1}...\partial _d^{\nu _d}$.\\
Consider the normalized Hermite polynomials of order $\nu$ in $d$ variables,
\begin{equation}
h_\nu (x)=\frac 1{\left( 2^{\left| \nu \right| }\nu
!\right)
^{1/2}}\prod_{i=1}^d(-1)^{\nu _i}e^{x_i^2}\partial _i^{\nu _i}(e^{-x_i^2}).
\end{equation}
 The {\em Ornstein-Uhlenbeck semigroup} on ${\mathbb{R}
^d}$ \label{semOU2}
 is defined by
\begin{eqnarray*}
 T_t f(x) &=&  \frac{1}{(1-e^{-2t})^{d/2}}\int_{\mathbb{R}^d}
e^{- \frac{e^{-2t}(|x|^2 +|y|^2) - 2 e^{-t} \langle x,y \rangle
}{1-e^{-2t}}} f(y) \gamma_d(dy) \quad .
\end{eqnarray*}
Using
the {\em Bochner subordination formula}
\begin{equation} \label{bochner}
 e^{-\lambda} = \frac{1}{\sqrt \pi} \int_0^{\infty} \frac{e^{-u}}{\sqrt u} e^{-\lambda^2/4u} du,
\end{equation}
we introduce the {\em
Poisson-Hermite semigroup} \label{semPH1} by
\begin{eqnarray}
P_t f(x) & = & \frac{1}{\sqrt \pi} \int_0^{\infty}
\frac{e^{-u}}{\sqrt u} T_{t^2/4u}f(x) du.
\end{eqnarray}
 Now, taking the change of
variables $s=\displaystyle\frac{t^2}{4u}$ , $ P_{t}f(x)$ can be written as
\begin{equation}\label{02}
P_t f(x) = \int^{\infty}_0 T_sf(x)  \mu^{(1/2)}_t(ds),
\end{equation}
where $$\mu^{(1/2)}_t(ds) = \frac{t}{2\sqrt{\pi}} e^{-t^2/4s} s^{-3/2} ds,$$\label{onesided1/2}
is {\em the  one-sided stable measure on $(0, \infty)$ of order $1/2$}.\\
It is easy to see that $\mu^{(1/2)}_t$ is a  probability measure on $(0, \infty)$.

It is well known, that the Hermite polynomials are
eigenfunctions of the operator $L$,
\begin{equation}\label{eigen}
L h_{\nu}(x)=-\left|\nu \right|h_\nu(x).
\end{equation}
In consequence
\begin{equation}\label{OUHerm}
T_t h_\nu(x)=e^{-t\left| \nu\right|}h_\nu(x),
\end{equation}
and
\begin{equation}\label{PHHerm}
 P_t h_\nu(x)=e^{-t\sqrt{\left| \nu\right|}}h_\nu(x),
\end{equation}
i.e. the Hermite polynomials are also eigenfunctions of $T_t$ and $P_t$ for any $t \geq 0$, for more details, see \cite{urbina2019}. \\

Next, we present some technical results for the measure $\mu^{(1/2)}_t$  needed  in what follows. First, as $\mu^{(1/2)}_t(ds) = \displaystyle\frac t{2\sqrt{\pi}}\frac{e^{-t^2/4s}}{s^{3/2}}ds = g(t,s) ds$, for any $k\in \mathbb{N}$,
we use the notation $\frac{\partial^{k}}{\partial t^{k}}\mu_{t}^{(1/2)}(ds)$ for
 \begin{equation}
\frac{\partial^{k}}{\partial t^{k}}\mu_{t}^{(1/2)}(ds) := \frac{\partial^{k} g(t,s)}{\partial t^{k}}ds.
\end{equation}

\begin{lem} \label{lematec1} Given $k\in \mathbb{N}$
\begin{equation}
 \frac{\partial^{k}\mu_{t}^{(1/2)}}{\partial
t^{k}}(ds)=\left(\sum_{i,j}a_{i,j}\frac{t^{i}}{s^{j}}\right)\mu_{t}^{(1/2)}(ds)
\end{equation}
where $\{a_{i,j}\}$ is a finite set of constants and the indexes $i\in
\mathbb{Z}$, $j\in \mathbb{N}$ verifies the ecuation $2j-i=k$.
\end{lem}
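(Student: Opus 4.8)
The natural approach is induction on $k$, exploiting the fact that the logarithmic derivative of $g(t,s)=\tfrac{t}{2\sqrt\pi}\,s^{-3/2}e^{-t^2/4s}$ in the variable $t$ is itself of the required form. Concretely, one first records the base case $k=1$ by a direct computation:
\begin{equation*}
\frac{\partial g}{\partial t}(t,s)=\left(\frac{1}{t}-\frac{t}{2s}\right)g(t,s),
\end{equation*}
so that $\frac{\partial\mu_t^{(1/2)}}{\partial t}(ds)=\big(a_{-1,0}\,t^{-1}+a_{1,1}\,t\,s^{-1}\big)\mu_t^{(1/2)}(ds)$ with $a_{-1,0}=1$, $a_{1,1}=-\tfrac12$; here the two exponent pairs $(i,j)=(-1,0)$ and $(1,1)$ both satisfy $2j-i=1=k$. (Note this already shows why one must allow $i\in\mathbb{Z}$ and $j\ge 0$.)

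For the inductive step, assume
\begin{equation*}
\frac{\partial^{k}g}{\partial t^{k}}(t,s)=\Big(\sum_{i,j}a_{i,j}\,\frac{t^{i}}{s^{j}}\Big)g(t,s),
\end{equation*}
the sum being finite and every term obeying $2j-i=k$. Differentiating once more and using the product rule together with the base-case identity $\partial_t g=(t^{-1}-\tfrac{t}{2s})\,g$ gives
\begin{equation*}
\frac{\partial^{k+1}g}{\partial t^{k+1}}=\sum_{i,j}a_{i,j}\Big(i\,\frac{t^{i-1}}{s^{j}}+\frac{t^{i-1}}{s^{j}}-\frac12\,\frac{t^{i+1}}{s^{j+1}}\Big)g(t,s).
\end{equation*}
One then checks that each of the three resulting exponent pairs — $(i-1,j)$ from differentiating the rational prefactor, $(i-1,j)$ from the $t^{-1}$ term of $\partial_t g/g$, and $(i+1,j+1)$ from the $-\tfrac{t}{2s}$ term — satisfies the updated relation $2j-i=k+1$: indeed $2j-(i-1)=(2j-i)+1=k+1$ and $2(j+1)-(i+1)=(2j-i)+1=k+1$. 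Collecting terms with equal exponent pairs (which only merges finitely many coefficients) produces a new finite family $\{a'_{i',j'}\}$ of constants with $2j'-i'=k+1$, completing the induction.

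I do not anticipate any genuine analytic obstacle here: the argument is purely algebraic bookkeeping once the identity $\partial_t g=(t^{-1}-\tfrac{t}{2s})g$ is in hand. The one point requiring mild care is the index accounting — verifying that the affine invariant $2j-i$ is incremented by exactly one under each of the three operations above, and that $j$ never becomes negative (it can only stay fixed or increase), while $i$ may drift to negative values, which is why the statement quantifies $i$ over $\mathbb{Z}$ and $j$ over $\mathbb{N}$. Finiteness of $\{a_{i,j}\}$ is automatic because the number of terms is multiplied by at most $3$ at each step, hence stays finite for every fixed $k$.
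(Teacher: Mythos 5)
Your induction is correct: the identity $\partial_t g=\bigl(t^{-1}-\tfrac{t}{2s}\bigr)g$ holds, each of the exponent shifts $(i,j)\mapsto(i-1,j)$ and $(i,j)\mapsto(i+1,j+1)$ raises the invariant $2j-i$ by exactly one while keeping $j\in\mathbb{N}$, and finiteness is clear. The paper itself gives no proof, deferring to \cite{Pinurb} and \cite{urbina2019}, where the argument is exactly this induction on $k$ based on the same first-derivative identity (which the paper also invokes explicitly in the proof of Lemma \ref{TLind}), so your route coincides with the intended one.
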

\begin{lem}\label{lematec2}  Given $k\in \mathbb{N}$ and $t>0$
$$
\int_{0}^{+\infty}\frac{1}{s^{k}}\mu_{t}^{(1/2)}(ds)=\frac{C_{k}}{t^{2k}},$$
where
$C_{k}=\frac{2^{2k}\Gamma(k+\frac{1}{2})}{\pi^{\frac{1}{2}}}.$
\end{lem}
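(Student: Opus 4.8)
The plan is to evaluate the integral directly from the explicit formula for the one-sided stable density. Recall that $\mu_t^{(1/2)}(ds) = \frac{t}{2\sqrt{\pi}} e^{-t^2/4s} s^{-3/2}\, ds$, so that
$$\int_{0}^{+\infty} \frac{1}{s^{k}}\,\mu_{t}^{(1/2)}(ds) = \frac{t}{2\sqrt{\pi}}\int_{0}^{+\infty} s^{-k-3/2}\, e^{-t^2/4s}\, ds.$$
The key step is the change of variables $u = t^2/(4s)$, i.e. $s = t^2/(4u)$ — the very substitution (read backwards) used above to pass from the Bochner subordination formula \eqref{bochner} to the representation \eqref{02}. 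Under it $ds = -\tfrac{t^2}{4u^2}\,du$, the limits $s\to 0^+$ and $s\to +\infty$ turn into $u\to +\infty$ and $u\to 0^+$, and collecting the powers of $t$ and of $4$ gives $s^{-k-3/2}\, ds = \tfrac{4^{k+1/2}}{t^{2k+1}}\, u^{k-1/2}\, du$, with the sign absorbed by reversing the orientation.

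Substituting back, the integral becomes
$$\frac{t}{2\sqrt{\pi}}\cdot\frac{4^{k+1/2}}{t^{2k+1}}\int_{0}^{+\infty} u^{k-1/2}\, e^{-u}\, du = \frac{4^{k+1/2}}{2\sqrt{\pi}\, t^{2k}}\,\Gamma\!\left(k+\tfrac12\right),$$
where in the last step we recognized the Euler Gamma integral $\int_{0}^{+\infty} u^{k-1/2} e^{-u}\, du = \Gamma(k+\tfrac12)$, which converges because $k\ge 1$, hence $k-\tfrac12 > -1$. Finally, since $4^{k+1/2} = 2^{2k+1}$ we have $\tfrac{4^{k+1/2}}{2} = 2^{2k}$, and therefore the integral equals $\dfrac{2^{2k}\,\Gamma(k+\tfrac12)}{\sqrt{\pi}\, t^{2k}} = \dfrac{C_{k}}{t^{2k}}$, which is the claimed identity.

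I do not expect any real obstacle here: the statement reduces to one well-chosen substitution together with the definition of the Gamma function. The only point needing a little care is the bookkeeping of the powers of $t$ and the numerical constants, so that the final constant comes out exactly as $C_{k} = 2^{2k}\Gamma(k+\tfrac12)/\sqrt{\pi}$; one should also note that interchanging the change of variables with the (improper) limits of integration is legitimate because the integrand is nonnegative and the integral is absolutely convergent for every $k\in\mathbb{N}$ and $t>0$.
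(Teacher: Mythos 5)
Your computation is correct, and it is exactly the standard argument: the paper itself defers the proof to \cite{Pinurb} and \cite{urbina2019}, where the identity is obtained by the same substitution $u=t^{2}/(4s)$ reducing the integral to $\Gamma(k+\tfrac12)$. The constant bookkeeping checks out, so nothing further is needed.
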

\begin{cor}\label{corol1} Given $k\in \mathbb{N}$ and $t>0$
\begin{equation}
 \int_{0}^{+\infty}\left|\frac{\partial^{k}\mu_{t}^{(1/2)}}{\partial
t^{k}}\right|(ds)\leq\frac{C_{k}}{t^{k}}.
\end{equation}
\end{cor}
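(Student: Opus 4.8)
The plan is to deduce this estimate directly from the two preceding results. By Lemma \ref{lematec1} we may write
$$\frac{\partial^{k}\mu_{t}^{(1/2)}}{\partial t^{k}}(ds)=\left(\sum_{i,j}a_{i,j}\frac{t^{i}}{s^{j}}\right)\mu_{t}^{(1/2)}(ds),$$
where the sum runs over a finite index set and every pair $(i,j)$ occurring in it satisfies $2j-i=k$ with $j\in\mathbb{N}$. Since $t>0$ and $s>0$ on the domain of integration, every factor $t^{i}$, $s^{-j}$ and the measure $\mu_{t}^{(1/2)}$ is nonnegative, so by the triangle inequality the total variation of the signed measure on the left is bounded pointwise by $\bigl(\sum_{i,j}|a_{i,j}|\,t^{i}s^{-j}\bigr)\mu_{t}^{(1/2)}(ds)$.

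Next I would integrate term by term, which is legitimate because the index set is finite and all summands are nonnegative, obtaining
$$\int_{0}^{+\infty}\left|\frac{\partial^{k}\mu_{t}^{(1/2)}}{\partial t^{k}}\right|(ds)\le\sum_{i,j}|a_{i,j}|\,t^{i}\int_{0}^{+\infty}\frac{1}{s^{j}}\,\mu_{t}^{(1/2)}(ds).$$
Now apply Lemma \ref{lematec2} to each inner integral, which gives $\int_{0}^{+\infty}s^{-j}\mu_{t}^{(1/2)}(ds)=C_{j}\,t^{-2j}$. Substituting and invoking the relation $2j-i=k$ from Lemma \ref{lematec1}, each term becomes $|a_{i,j}|\,C_{j}\,t^{\,i-2j}=|a_{i,j}|\,C_{j}\,t^{-k}$, so the whole expression equals $t^{-k}\sum_{i,j}|a_{i,j}|\,C_{j}$. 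Renaming the finite, $k$-dependent constant $\sum_{i,j}|a_{i,j}|\,C_{j}$ as $C_{k}$ then yields the claimed bound $C_{k}/t^{k}$.

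I do not expect any genuine obstacle here: the entire argument hinges on the bookkeeping identity $i-2j=-k$, which is exactly what forces all the surviving powers of $t$ to collapse to $t^{-k}$ regardless of the sign of $i$. The only point that deserves an explicit word of justification is the passage to the total variation inside the integral, and that is taken care of by the positivity observations noted above.
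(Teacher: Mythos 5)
Your argument is correct and is essentially the standard proof: the paper itself defers this corollary to \cite{Pinurb} and \cite{urbina2019}, where it is obtained exactly as you do, by combining Lemma \ref{lematec1} with Lemma \ref{lematec2} and using the relation $2j-i=k$ to collapse every term to a constant times $t^{-k}$. No gaps; the positivity remark justifying the pointwise bound on the total variation is the only point needing care, and you handle it.
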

On the other hand, by considering the {\em maximal function of the Ornstein-Uhlenbeck  semigroup}
$$T^{\ast}f(x)=\displaystyle\sup_{t>0}|T_{t}f(x)|,$$
we obtain the inequaility:

\begin{lem}\label{lematec5}
Let $f\in L^{1}(\gamma_{d}), x\in \mathbb{R}^{d}$ and $k\in \mathbb{N}$
\begin{equation}
\left|\frac{\partial^{k}P_{t}f(x)}{\partial t^{k}}\right| \leq C_{k} \,
T^{\ast}f(x) t^{-k} \quad, \forall t>0 \quad .
\end{equation}
\end{lem}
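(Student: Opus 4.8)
The plan is to read the estimate off directly from the subordination identity \eqref{02},
$$
P_t f(x)=\int_0^{\infty} T_s f(x)\,\mu^{(1/2)}_t(ds)=\int_0^{\infty} T_s f(x)\, g(t,s)\,ds,
$$
by differentiating $k$ times in $t$ under the integral sign and then controlling the resulting kernel with Corollary \ref{corol1}. I would first dispose of the trivial case: if $T^{\ast}f(x)=+\infty$ the asserted inequality holds automatically, so I may fix $x$ with $T^{\ast}f(x)<+\infty$, in which case $|T_s f(x)|\le T^{\ast}f(x)$ for every $s>0$ (in particular each $T_s f(x)$ is finite, which is the standard fact that for fixed $x$ and $s>0$ the Mehler kernel, multiplied by the Gaussian density, is bounded by a Gaussian in the integration variable).

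The main step is to justify the interchange of $\partial_t^{k}$ with the integral. I would fix a compact interval $[a,b]\subset(0,\infty)$ and use Lemma \ref{lematec1} to write, for each $0\le m\le k$,
$$
\frac{\partial^{m} g}{\partial t^{m}}(t,s)=\Big(\sum_{i,j}a_{i,j}\frac{t^{i}}{s^{j}}\Big) g(t,s),\qquad 2j-i=m,
$$
so that, for $t\in[a,b]$,
$$
\sup_{t\in[a,b]}\Big|\frac{\partial^{m} g}{\partial t^{m}}(t,s)\Big|\ \le\ C(a,b,k)\sum_{j}\frac{e^{-a^{2}/(4s)}}{s^{\,j+3/2}}\ =:\ \Phi(s).
$$
The function $\Phi$ is Lebesgue integrable on $(0,\infty)$: near $s=0$ the factor $e^{-a^{2}/(4s)}$ vanishes faster than any negative power of $s$, and near $s=+\infty$ each summand decays like $s^{-j-3/2}$, which is integrable. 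Since $|T_s f(x)\,\partial_t^{m} g(t,s)|\le T^{\ast}f(x)\,\Phi(s)\in L^{1}(ds)$ uniformly for $t\in[a,b]$, the classical differentiation-under-the-integral-sign theorem applies inductively $k$ times and gives
$$
\frac{\partial^{k} P_t f(x)}{\partial t^{k}}=\int_0^{\infty} T_s f(x)\,\frac{\partial^{k}}{\partial t^{k}}\mu^{(1/2)}_t(ds),\qquad t>0.
$$

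Finally I would take absolute values inside the integral, bound $|T_s f(x)|$ by $T^{\ast}f(x)$, pull it out, and invoke Corollary \ref{corol1}:
$$
\Big|\frac{\partial^{k} P_t f(x)}{\partial t^{k}}\Big|\ \le\ T^{\ast}f(x)\int_0^{\infty}\Big|\frac{\partial^{k}}{\partial t^{k}}\mu^{(1/2)}_t\Big|(ds)\ \le\ C_{k}\,T^{\ast}f(x)\,t^{-k},
$$
which is exactly the claimed inequality. The only genuinely delicate point is the interchange of $\partial_t^{k}$ and the integral in the second step; once the explicit algebraic form of $\partial_t^{k}g$ furnished by Lemma \ref{lematec1} is in hand, the required domination reduces to the elementary integrability of $\Phi$, and the last step is immediate from Corollary \ref{corol1}.
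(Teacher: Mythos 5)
Your argument is correct and is essentially the proof the paper has in mind: the paper defers this lemma to \cite{Pinurb} and \cite{urbina2019}, where (as in the paper's own use of the same identity in Lemmas \ref{lematec3} and \ref{kdecay}) one differentiates the subordination representation (\ref{02}) $k$ times under the integral, bounds $|T_s f(x)|$ by $T^{\ast}f(x)$, and applies Corollary \ref{corol1} to get the factor $C_k t^{-k}$. Your extra care in justifying the differentiation under the integral via Lemma \ref{lematec1} and a dominating function on compact $t$-intervals is exactly the standard justification, so no gap remains.
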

For the proofs of these technical results, see \cite{Pinurb} or \cite{urbina2019}.\\

Now, for completeness, we need some background on variable Lebesgue spaces with respect to a Borel measure $\mu$.\\

A $\mu$-measurable function $p(\cdot):\Omega \subset \mathbb{R}^d \rightarrow [1,\infty]$ is said to be an {\em exponent function}; the set of all the exponent functions will be denoted by  $\mathcal{P}(\Omega,\mu)$. For $E\subset\Omega$ we set $$p_{-}(E)=\text{ess}\inf_{x\in E}p(x) \;\text{and}\; p_{+}(E)=\text{ess}\sup_{x\in E}p(x).$$
$\Omega_{\infty}=\{x\in \Omega:p(x)=\infty\}$.\\
  We use the abbreviations $p_{+}=p_{+}(\Omega)$ and $p_{-}=p_{-}(\Omega)$.

\begin{defi}\label{deflogholder}
Let $E\subset \mathbb{R}^{d}$. we say that $p(\cdot):E\rightarrow\mathbb{R}$ is locally log-H\"{o}lder continuous, denote  by $p(\cdot)\in LH_{0}(E)$, if there exists a constant $C_{1}>0$ such that
			\begin{eqnarray*}
				|p(x)-p(y)|&\leq&\frac{C_{1}}{log(e+\frac{1}{|x-y|})}
			\end{eqnarray*}
			for all $x,y\in E$.
			
			We say that $p(\cdot)$ is log-H\"{o}lder continuous at infinity with base point at $x_{0}\in \mathbb{R}^{d}$, and denote this by $p(\cdot)\in LH_{\infty}(E)$, if there exist  constants $p_{\infty}\in\mathbb{R}$ and $C_{2}>0$ such that
			\begin{eqnarray*}
				|p(x)-p_{\infty}|&\leq&\frac{C_{2}}{log(e+|x-x_{0}|)}
			\end{eqnarray*}
			for all $x\in E$.
			
			We say that $p(\cdot)$ is log-H\"{o}lder continuous, and denote this by $p(\cdot)\in LH(E)$ if both conditions are satisfied.
			The maximum, $\max\{C_{1},C_{2}\}$ is called the log-H\"{o}lder constant of $p(\cdot)$.
\end{defi}

\begin{defi}\label{defPdlog}
			Let $E\subset \mathbb{R}^{d}$, we say that  $p(\cdot)\in\mathcal{P}_{d}^{log}(E)$, if $\frac{1}{p(\cdot)}$ is log-H\"{o}lder continuous and  denote by $C_{log}(p)$ or $C_{log}$ the log-H\"{o}lder constant of $\frac{1}{p(\cdot)}$.
		\end{defi}
		
		\begin{defi}
Let $\Omega\subset \mathbb{R}^{d}$ and  $p(\cdot)\in\mathcal{P}(\Omega,\mu)$. For a $\mu$-measurable function $f:\Omega\rightarrow \overline{\mathbb{R}}$, we define the modular \begin{equation}
\rho_{p(\cdot),\mu}(f)=\displaystyle\int_{\Omega\setminus\Omega_{\infty}}|f(x)|^{p(x)}\mu(dx)+\|f\|_{L^{\infty}(\Omega_{\infty},\mu)},
\end{equation}
and the norm
\begin{equation}
\|f\|_{L^{p(\cdot)}(\Omega,\mu)}=\inf\left\{\lambda>0:\rho_{p(\cdot),\mu}(f/\lambda)\leq 1\right\}.
\end{equation}

\end{defi}

\begin{defi} The variable exponent Lebesgue space on $\Omega\subset\mathbb{R}^{d}$, $L^{p(\cdot)}(\Omega,\mu)$ consists on those $\mu\_$measurable functions $f$ for which there exists $\lambda>0$ such that $\rho_{p(\cdot),\mu}\left(\frac{f}{\lambda}\right)<\infty,$ i.e.
\begin{equation*}
L^{p(\cdot)}(\Omega,\mu) =\left\{f:\Omega\to \overline{\mathbb{R}}: f \; \text{is measurable and} \; \rho_{p(\cdot),\mu}\left(\frac{f}{\lambda}\right)<\infty, \; \text{for some} \;\lambda>0\right\}.
\end{equation*}

\end{defi}
		
\begin{obs}\label{obs4.2}
 When $\mu$ is the Lebesgue measure, we write  $\rho_{p(\cdot)}$ and $\|f\|_{p(\cdot)}$ instead of $\rho_{p(\cdot),\mu}$ and $\|f\|_{p(\cdot),\mu}$.
\end{obs}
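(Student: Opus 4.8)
This ``Observation'' records a notational convention rather than a mathematical claim, so the only thing to verify is that suppressing the measure from the notation is consistent and recovers the classical objects. The plan is therefore to specialise $\mu=dx$, the Lebesgue measure on $\Omega\subset\mathbb{R}^d$, in the three preceding definitions. First I would observe that an exponent function $p(\cdot)\in\mathcal{P}(\Omega,dx)$ is precisely a Lebesgue-measurable map $\Omega\to[1,\infty]$, so that $\mathcal{P}(\Omega,dx)$, the sets $\Omega_\infty$ and $\Omega\setminus\Omega_\infty$, and the quantities $p_\pm(E)$ all coincide with their classical counterparts. Next, the modular becomes
\[
\rho_{p(\cdot),dx}(f)=\int_{\Omega\setminus\Omega_\infty}|f(x)|^{p(x)}\,dx+\|f\|_{L^\infty(\Omega_\infty,dx)},
\]
which is exactly the usual modular of the classical variable Lebesgue space, and the induced Luxemburg-type norm $\|f\|_{L^{p(\cdot)}(\Omega,dx)}=\inf\{\lambda>0:\rho_{p(\cdot),dx}(f/\lambda)\le 1\}$ coincides with the standard variable Lebesgue norm found in the usual monographs on variable exponent spaces. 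Hence the abbreviations $\rho_{p(\cdot)}$ and $\|f\|_{p(\cdot)}$ are unambiguous, which is precisely what is being asserted.

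The only point that deserves a moment of attention, and the closest thing to an ``obstacle'', is the $L^\infty$ term over $\Omega_\infty$: one should note that for $\mu=dx$ (which is $\sigma$-finite on $\Omega\subset\mathbb{R}^d$) the essential supremum appearing in the modular is the ordinary Lebesgue essential supremum, so $\rho_{p(\cdot),dx}(f/\lambda)$ is finite for some $\lambda>0$ exactly when the restriction of $f$ to $\Omega\setminus\Omega_\infty$ lies in the classical variable Lebesgue class and $f$ is bounded a.e.\ on $\Omega_\infty$; in particular, when $|\Omega_\infty|=0$ the $L^\infty$ contribution drops out and one recovers the familiar definition. There is no genuine difficulty here, so beyond this specialisation no further argument is required.
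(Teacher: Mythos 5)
Your proposal is correct: the Observation is purely a notational convention, and the paper itself offers no proof of it, so your verification that specialising $\mu$ to Lebesgue measure recovers the classical modular and Luxemburg norm is exactly the (trivial) content intended. Nothing further is needed.
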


\begin{teo} (Norm conjugate formula)
Let  $\nu$ a complete, $\sigma$-finite measure on $\Omega$.  $p(\cdot)\in \mathcal{P}(\Omega,\nu)$, then\\
\begin{equation}\label{normaconjugada}
\frac{1}{2}\|f\|_{p(\cdot),\nu}\leq \|f\|^{'}_{p(\cdot),\nu}\leq 2\|f\|_{p(\cdot),\nu},
\end{equation}
for all $f$ $\nu$-measurable on $\Omega$,\\ donde $\displaystyle\|f\|^{'}_{p(\cdot),\nu}=\sup\left\{\int_{\Omega}|f||g|d\mu:g\in L^{p'(\cdot)}(\Omega,\nu),\|g\|_{p'(\cdot),\nu}\leq 1\right\}.$
\end{teo}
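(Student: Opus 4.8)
The statement is the classical \emph{norm conjugate formula} identifying $\|\cdot\|'_{p(\cdot),\nu}$ with (an equivalent of) the associate norm of $L^{p(\cdot)}(\Omega,\nu)$; the defining integral in $\|f\|'_{p(\cdot),\nu}$ is of course taken against $\nu$. The plan is to prove the two inequalities separately, since the right-hand estimate is essentially the generalized H\"older inequality for variable Lebesgue spaces while the left-hand one is the genuinely delicate half.

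For the upper bound $\|f\|'_{p(\cdot),\nu}\le 2\|f\|_{p(\cdot),\nu}$ I would first dispose of the trivial cases ($f=0$ $\nu$-a.e., or $\|f\|_{p(\cdot),\nu}=\infty$), and then, for an arbitrary competitor $g$ with $\|g\|_{p'(\cdot),\nu}\le 1$, normalize $F=f/\|f\|_{p(\cdot),\nu}$ and split $\Omega$ into $\Omega\setminus(\Omega_{\infty}\cup\{p=1\})$, the set $\Omega_{\infty}=\{p=\infty\}$, and $\{p=1\}=\{p'=\infty\}$. On the first piece I apply Young's inequality $|Fg|\le |F|^{p(x)}/p(x)+|g|^{p'(x)}/p'(x)\le|F|^{p(x)}+|g|^{p'(x)}$ pointwise; on $\Omega_{\infty}$ I use $|Fg|\le\|F\|_{L^{\infty}(\Omega_{\infty},\nu)}\,|g|$; and on $\{p=1\}$ I use $|Fg|\le|F|\,\|g\|_{L^{\infty}(\{p=1\},\nu)}$. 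Integrating and invoking the unit-ball property of the modular ($\|F\|_{p(\cdot),\nu}\le 1\Rightarrow\rho_{p(\cdot),\nu}(F)\le 1$, and likewise for $g$) gives $\int_{\Omega}|F||g|\,d\nu\le\rho_{p(\cdot),\nu}(F)+\rho_{p'(\cdot),\nu}(g)\le 2$; multiplying back by $\|f\|_{p(\cdot),\nu}$ and taking the supremum over $g$ yields the claim.

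For the lower bound $\tfrac12\|f\|_{p(\cdot),\nu}\le\|f\|'_{p(\cdot),\nu}$, i.e. $\|f\|_{p(\cdot),\nu}\le 2\|f\|'_{p(\cdot),\nu}$, I may assume $\|f\|'_{p(\cdot),\nu}<\infty$. I would first reduce to the case where $f$ is bounded with $\nu(\{f\neq 0\})<\infty$: using $\sigma$-finiteness pick $E_{n}\uparrow\Omega$ with $\nu(E_{n})<\infty$ and set $f_{n}=\operatorname{sgn}(f)\min(|f|,n)\chi_{E_{n}}$, so that $|f_{n}|\uparrow|f|$; then $\|f_{n}\|_{p(\cdot),\nu}\uparrow\|f\|_{p(\cdot),\nu}$ by the monotone convergence property of the norm, while $\|f_{n}\|'_{p(\cdot),\nu}\le\|f\|'_{p(\cdot),\nu}$ by monotonicity of $\|\cdot\|'$ in $|f|$, so it suffices to bound each $f_{n}$. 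For such an $f$ with $\lambda:=\|f\|_{p(\cdot),\nu}\in(0,\infty)$, the normalized function $h=f/\lambda$ satisfies $\rho_{p(\cdot),\nu}(h)=1$; here one uses that $t\mapsto\rho_{p(\cdot),\nu}(h/t)$ is finite, continuous and strictly decreasing on $(0,\infty)$ with limit $0$ at infinity (a further truncation of the part of $\Omega\setminus\Omega_{\infty}$ where $|h|>1$ guarantees finiteness of the modular when $p_{+}=\infty$). Then I would take as test function $g=|h|^{p(\cdot)-1}$ on $\Omega\setminus\Omega_{\infty}$ and, on $\Omega_{\infty}$, a function $\operatorname{sgn}(f)\chi_{A}$ normalized so that $\int_{\Omega_{\infty}}|g|\,d\nu$ matches the $\|h\|_{L^{\infty}(\Omega_{\infty},\nu)}$-term of the modular, where $A\subset\Omega_{\infty}$ has positive measure and $|h|$ is close to its essential supremum on $\Omega_{\infty}$ there. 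The identity $(p(x)-1)p'(x)=p(x)$ gives $\rho_{p'(\cdot),\nu}(g)=\rho_{p(\cdot),\nu}(h)=1$, hence $\|g\|_{p'(\cdot),\nu}\le 1$, and $\int_{\Omega}|f||g|\,d\nu=\lambda\int_{\Omega}|h||g|\,d\nu$ equals, up to the $\Omega_{\infty}$ bookkeeping, $\lambda\,\rho_{p(\cdot),\nu}(h)=\lambda$; the factor $\tfrac12$ in the statement provides exactly the room to absorb the loss from the $\Omega_{\infty}$ contribution. Thus $\|f\|'_{p(\cdot),\nu}\ge\int_{\Omega}|f||g|\,d\nu\ge\tfrac12\|f\|_{p(\cdot),\nu}$.

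The hard part is the lower bound, and concretely its two sticking points: (a) the reduction to bounded, finite-measure-support functions, so that the modular is finite and the continuity and strict monotonicity of $t\mapsto\rho_{p(\cdot),\nu}(h/t)$ can be used to pin down $\rho_{p(\cdot),\nu}(h)=1$ — this is where $\sigma$-finiteness (and, for measurability of the test functions, completeness) of $\nu$ enters; and (b) the construction of the dual function on $\Omega_{\infty}$, where $p\equiv\infty$ and $p'\equiv 1$, so the formula $|h|^{p(\cdot)-1}$ is meaningless and one must instead concentrate the mass of $g$ on a level set of $|h|$ near $\|h\|_{L^{\infty}(\Omega_{\infty},\nu)}$ — it is precisely this step that prevents the constants from being $1$ and forces the stated $\tfrac12$ and $2$. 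The upper bound, by contrast, is a routine consequence of pointwise Young's inequality together with the unit-ball property of the modular.
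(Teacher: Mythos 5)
The paper offers no proof of this theorem: it simply cites Corollary 3.2.14 of \cite{LibroDenHarjHas}, so your attempt has to be judged against that standard argument. Your upper bound is correct and is nothing more than the generalized H\"older inequality with constant $2$ (which the paper itself records as (\ref{Holder generalizada})): the three-set splitting into $\{1<p<\infty\}$, $\Omega_\infty$ and $\{p=1\}$, pointwise Young's inequality, and the unit-ball property $\|F\|_{p(\cdot),\nu}\le 1\Rightarrow\rho_{p(\cdot),\nu}(F)\le 1$ do give $\int_\Omega|F||g|\,d\nu\le\rho_{p(\cdot),\nu}(F)+\rho_{p'(\cdot),\nu}(g)\le 2$. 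Your reduction of the lower bound to bounded $f$ with support of finite measure (Fatou property of the Luxemburg norm, monotonicity of $\|\cdot\|'$) is also fine.

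The genuine gap is in the construction of the dual test function for the left-hand inequality. You take $g=|h|^{p(\cdot)-1}$ off $\Omega_\infty$ plus a concentrated piece on $\Omega_\infty$ and assert $\rho_{p'(\cdot),\nu}(g)=\rho_{p(\cdot),\nu}(h)=1$ ``up to the $\Omega_\infty$ bookkeeping'', with the factor $\tfrac12$ absorbing the loss. That identity is false as soon as $\{p=1\}$ or $\Omega_\infty$ carries mass: on $\{p=1\}$ (where $p'=\infty$) your $g\equiv 1$ contributes a full extra unit $\|g\|_{L^\infty(\{p=1\},\nu)}=1$ to the conjugate modular while contributing only $\int_{\{p=1\}}|h|\,d\nu$ to the pairing, and on $\Omega_\infty$ (where $p'=1$) the concentrated piece adds its whole $L^1$-mass to $\rho_{p'(\cdot),\nu}(g)$ while recovering $\|h\|_{L^\infty(\Omega_\infty,\nu)}$ only up to $\varepsilon$. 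Writing $A+B+C=1$ for the three contributions to $\rho_{p(\cdot),\nu}(h)$, your $g$ has conjugate modular about $A+1+C$, and the obvious repair (replace $g$ by $g/\rho_{p'(\cdot),\nu}(g)$, using convexity of the modular) only yields a pairing of order $(A+B+C^2)/(A+1+C)$, whose worst case is $3/8<\tfrac12$; so the claim that $\tfrac12$ ``provides exactly the room'' is unsubstantiated, and the naive unified test function does not give the stated constant. Obtaining $\|f\|_{p(\cdot),\nu}\le 2\|f\|'_{p(\cdot),\nu}$ is precisely the part that requires a finer argument --- either the conjugate-semimodular/biconjugation proof of Corollary 3.2.14 in \cite{LibroDenHarjHas} (Young's inequality for the semimodular pair together with $\rho^{**}=\rho$), or a splitting in which the unit of dual mass is distributed among the three regions so that the total conjugate modular stays $\le 1$ while at least half of each piece of $\rho_{p(\cdot),\nu}(h)$ is recovered. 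As written, your sketch would prove the inequality only with a worse constant, which is a weaker statement than the one in the theorem (though still sufficient for every later use made of (\ref{normaconjugada}) in this paper, where only equivalence up to constants matters).
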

\begin{proof} See Corollary 3.2.14 in \cite{LibroDenHarjHas}
\end{proof}
\begin{teo}
(H\"older's inequality) Let $\nu$ a complete, $\sigma$-finite measure on $\Omega$. $r(\cdot),q(\cdot)\in \mathcal{P}(\Omega,\nu)$, define $p(\cdot)\in \mathcal{P}(\Omega,\nu)$ by $\displaystyle \frac{1}{p(x)}=\frac{1}{q(x)}+\frac{1}{r(x)}$ $\nu$ a.e. $x\in \Omega$ .\\
Then for all $f\in L^{q(\cdot)}(\Omega,\nu)$ and $g\in L^{r(\cdot)}(\Omega,\nu)$, $ fg\in L^{p(\cdot)}(\Omega,\nu)$ and
\begin{equation}\label{Holder generalizada}
\|fg\|_{p(\cdot),\nu}\leq 2\|f\|_{q(\cdot),\nu}\|g\|_{r(\cdot),\nu}
\end{equation}
\end{teo}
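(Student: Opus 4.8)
\emph{Proof strategy.} The plan is to establish the inequality at the level of the modular $\rho_{p(\cdot),\nu}$ and then pass to the Luxemburg norm by homogeneity. First I would dispose of the degenerate cases: since $f\in L^{q(\cdot)}(\Omega,\nu)$ and $g\in L^{r(\cdot)}(\Omega,\nu)$, both $\|f\|_{q(\cdot),\nu}$ and $\|g\|_{r(\cdot),\nu}$ are finite, and if either vanishes the corresponding function is $0$ $\nu$-a.e., so there is nothing to prove. By absolute homogeneity of $\|\cdot\|_{p(\cdot),\nu}$ it therefore suffices to treat $\|f\|_{q(\cdot),\nu}=\|g\|_{r(\cdot),\nu}=1$ and to prove $\|fg\|_{p(\cdot),\nu}\le2$. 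By the standard unit ball property this normalization gives $\rho_{q(\cdot),\nu}(f)\le1$ and $\rho_{r(\cdot),\nu}(g)\le1$, and it then suffices to prove the single modular estimate $\rho_{p(\cdot),\nu}(fg/2)\le1$, which yields $\|fg\|_{p(\cdot),\nu}\le2$ and, in particular, $fg\in L^{p(\cdot)}(\Omega,\nu)$.

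The computation is driven by the pointwise Young inequality. At any $x$ where $q(x)$ and $r(x)$ (hence $p(x)$) are finite, $\tfrac{p(x)}{q(x)}+\tfrac{p(x)}{r(x)}=1$, so $|f(x)g(x)|^{p(x)}\le\tfrac{p(x)}{q(x)}|f(x)|^{q(x)}+\tfrac{p(x)}{r(x)}|g(x)|^{r(x)}\le|f(x)|^{q(x)}+|g(x)|^{r(x)}$, the last step because $p(x)\le q(x)$ and $p(x)\le r(x)$. I would then split $\Omega$ into the four measurable pieces according to the finiteness of $q(\cdot)$ and $r(\cdot)$, noting that $p(x)=+\infty$ exactly when $q(x)=r(x)=+\infty$, that $p(x)=r(x)$ when only $q(x)=+\infty$, and $p(x)=q(x)$ when only $r(x)=+\infty$. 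On the all-finite piece one integrates the displayed bound. On $\{q=+\infty,\ r<\infty\}$ one uses $\|f\|_{L^\infty(\{q=+\infty\})}\le\rho_{q(\cdot),\nu}(f)\le1$ to get $|fg|^{p(x)}=|fg|^{r(x)}\le|g|^{r(x)}$; symmetrically on $\{q<\infty,\ r=+\infty\}$; and on $\Omega^p_\infty:=\{q=r=+\infty\}$ one uses $\|fg\|_{L^\infty(\Omega^p_\infty)}\le\|f\|_{L^\infty(\Omega^p_\infty)}\|g\|_{L^\infty(\Omega^p_\infty)}\le1$.

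Adding the four contributions gives $\int_{\{p<\infty\}}|fg|^{p(x)}\,d\nu\le\int_{\{q<\infty\}}|f|^{q(x)}\,d\nu+\int_{\{r<\infty\}}|g|^{r(x)}\,d\nu$. The one genuinely delicate point — and the reason the constant comes out exactly $2$ rather than something larger — is that one must not discard the $L^\infty$ mass: $\rho_{q(\cdot),\nu}(f)\le1$ forces $\int_{\{q<\infty\}}|f|^{q(x)}\,d\nu\le1-a$ with $a:=\|f\|_{L^\infty(\{q=+\infty\})}$, and $\int_{\{r<\infty\}}|g|^{r(x)}\,d\nu\le1-b$ with $b:=\|g\|_{L^\infty(\{r=+\infty\})}$; moreover $\|f\|_{L^\infty(\Omega^p_\infty)}\le a$ and $\|g\|_{L^\infty(\Omega^p_\infty)}\le b$ since $\Omega^p_\infty\subset\{q=+\infty\}\cap\{r=+\infty\}$. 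Using $p(x)\ge1$ (so $2^{-p(x)}\le\tfrac12$ on $\{p<\infty\}$) and $\|fg\|_{L^\infty(\Omega^p_\infty)}\le ab$,
\[
\rho_{p(\cdot),\nu}(fg/2)\le\tfrac12\big((1-a)+(1-b)\big)+\tfrac12\,ab=1-\tfrac12\big(a(1-b)+b\big)\le1,
\]
since $a,b\in[0,1]$. This closes the normalized case and hence proves \eqref{Holder generalizada}. Thus everything reduces to the one-line Young inequality together with the definitions of the modular and the norm; I expect the only real care needed to be in the bookkeeping of the four regions and of the $L^\infty$ pieces so as not to lose the factor $2$. (An alternative would be to start from the norm conjugate formula \eqref{normaconjugada}, reduce to estimating $\int_\Omega|fgh|\,d\nu$ for $\|h\|_{p'(\cdot),\nu}\le1$, and apply the three-weight Young inequality to the exponents $q(\cdot),r(\cdot),p'(\cdot)$, which satisfy $1/q(x)+1/r(x)+1/p'(x)=1$; but the modular route above is cleaner and uses neither completeness nor $\sigma$-finiteness of $\nu$.)
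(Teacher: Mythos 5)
Your proof is correct. Note, however, that the paper does not actually prove this theorem: it simply cites Lemma 3.2.20 of Diening--Harjulehto--H\"ast\"o--R\r{u}\v{z}i\v{c}ka, so there is no in-paper argument to match; what you have written is essentially a self-contained version of the standard proof behind that citation, adapted to this paper's convention for the modular (the essential supremum over $\Omega_\infty$ is \emph{added} to the integral) and to a general measure $\nu$. Your reduction to the normalized case, the pointwise Young inequality $|fg|^{p(x)}\le\frac{p(x)}{q(x)}|f|^{q(x)}+\frac{p(x)}{r(x)}|g|^{r(x)}\le|f|^{q(x)}+|g|^{r(x)}$ on the region where all exponents are finite, and the four-region splitting according to the finiteness of $q(\cdot)$ and $r(\cdot)$ are all sound, and your careful retention of the $L^\infty$ masses $a$ and $b$ (rather than discarding them) is precisely what makes the constant come out as $2$ under the additive-modular convention, via $\frac12\bigl((1-a)+(1-b)\bigr)+\frac12 ab\le 1$. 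Two small points worth making explicit if you write this up: the step $2^{-p(x)}\le\frac12$ uses $p(\cdot)\ge 1$, which is guaranteed only because the statement \emph{assumes} $p(\cdot)\in\mathcal{P}(\Omega,\nu)$ (the relation $1/p=1/q+1/r$ alone would allow $p<1$); and the unit-ball property $\|f\|_{q(\cdot),\nu}\le 1\Rightarrow\rho_{q(\cdot),\nu}(f)\le 1$ deserves the one-line left-continuity argument (monotone convergence in $\lambda\downarrow 1$) since it is used at the outset. Your closing observation is also accurate: this modular route needs neither completeness nor $\sigma$-finiteness of $\nu$, which the paper invokes only for the norm conjugate formula and the Minkowski integral inequality, whereas the alternative duality proof you sketch would need them.
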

\begin{proof} See Lemma 3.2.20 in \cite{LibroDenHarjHas}
\end{proof}
\begin{teo}
(Minkowski's integral inequality for variable Lebesgue spaces) Given $\mu$ and $\nu$ complete $\sigma$-finite measures on $X$ and $Y$ respectively, $p\in \mathcal{P}(X,\mu)$. Let $f:X\times Y\rightarrow \overline{\mathbb{R}}$ measurable with respect to the product measure on $X\times Y$, such that for almost every $y\in Y$, $f(\cdot,y)\in L^{p(\cdot)}(X,\mu)$. Then
\begin{equation}\label{integralMinkowski}
\left\|\int_{Y}f(\cdot,y)d\nu(y)\right\|_{p(\cdot),\mu}\leq 4\int_{Y}\|f(\cdot,y)\|_{p(\cdot),\mu}d\nu(y)
\end{equation}
\end{teo}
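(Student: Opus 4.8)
The plan is to prove \eqref{integralMinkowski} by duality: I would use the norm conjugate formula \eqref{normaconjugada} to pass from the variable-exponent norm of the left-hand side to its associate norm, and then reduce the whole estimate to an ordinary scalar computation controlled by Tonelli's theorem and the generalized H\"older inequality \eqref{Holder generalizada}. All of the variable-exponent machinery is applied on the space $(X,\mu)$, while the integration over $Y$ is a plain Lebesgue integral against $\nu$. We may assume the right-hand side of \eqref{integralMinkowski} is finite, since otherwise there is nothing to prove.

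Concretely, write $F(x)=\int_Y f(x,y)\,d\nu(y)$ and fix a nonnegative $g\in L^{p'(\cdot)}(X,\mu)$ with $\|g\|_{p'(\cdot),\mu}\le 1$. Since $|F(x)|\le\int_Y |f(x,y)|\,d\nu(y)$ pointwise and $(x,y)\mapsto |f(x,y)|\,g(x)$ is nonnegative and product-measurable, Tonelli's theorem yields
\begin{equation*}
\int_X |F(x)|\,g(x)\,d\mu(x)\ \le\ \int_Y\Big(\int_X |f(x,y)|\,g(x)\,d\mu(x)\Big)\,d\nu(y).
\end{equation*}
For each fixed $y$ I would apply \eqref{Holder generalizada} on $(X,\mu)$ with the pointwise splitting $1=\tfrac{1}{p(x)}+\tfrac{1}{p'(x)}$, obtaining
\begin{equation*}
\int_X |f(x,y)|\,g(x)\,d\mu(x)\ \le\ 2\,\|f(\cdot,y)\|_{p(\cdot),\mu}\,\|g\|_{p'(\cdot),\mu}\ \le\ 2\,\|f(\cdot,y)\|_{p(\cdot),\mu}.
\end{equation*}
Integrating in $y$ and taking the supremum over all admissible $g$ gives $\|F\|'_{p(\cdot),\mu}\le 2\int_Y \|f(\cdot,y)\|_{p(\cdot),\mu}\,d\nu(y)$, and the left inequality in \eqref{normaconjugada} then produces
\begin{equation*}
\Big\|\int_Y f(\cdot,y)\,d\nu(y)\Big\|_{p(\cdot),\mu}\ =\ \|F\|_{p(\cdot),\mu}\ \le\ 2\,\|F\|'_{p(\cdot),\mu}\ \le\ 4\int_Y \|f(\cdot,y)\|_{p(\cdot),\mu}\,d\nu(y),
\end{equation*}
which is exactly \eqref{integralMinkowski}.

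The points that require care, rather than real ingenuity, are measurability: one must know that $x\mapsto F(x)$ is $\mu$-measurable and that $y\mapsto\|f(\cdot,y)\|_{p(\cdot),\mu}$ is $\nu$-measurable, so that both sides of \eqref{integralMinkowski} are meaningful; both follow from the joint measurability hypothesis on $f$ together with the standard Tonelli/Fubini argument (and, under the finiteness reduction, the estimate above forces $F$ to be finite $\mu$-a.e.). The $L^\infty$-part of the modular over $\Omega_\infty$ is carried through automatically, because \eqref{normaconjugada} and \eqref{Holder generalizada} are already stated for arbitrary exponents, including the value $\infty$. The only genuine ``obstacle'' is bookkeeping of the universal constant: the factor $2$ from the conjugate-norm formula times the factor $2$ from variable-exponent H\"older gives the stated $4$, and no sharper constant is claimed. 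Alternatively, one could simply cite a reference such as \cite{LibroDenHarjHas}, as was done for the two preceding theorems.
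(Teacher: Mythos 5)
Your proposal is correct and follows essentially the same route as the paper: the paper's proof simply refers to the argument of Corollary 2.38 in \cite{dcruz}, adapted via the generalized H\"older inequality (\ref{Holder generalizada}), Fubini/Tonelli, and the norm conjugate formula (\ref{normaconjugada}), which is exactly the duality argument you wrote out, including the constant $4=2\cdot 2$.
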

\begin{proof} It is completely analogous to the proof of Corollary 2.38 in \cite{dcruz} by interchanging the Lebesgue measure for complete $\sigma$-finite measures $\mu$ and $\nu$ on $X$ and $Y$ respectively, and by using (\ref{Holder generalizada}), Fubini's theorem and then (\ref{normaconjugada}).
\end{proof}

In what follows $\mu$ represents the measure $\displaystyle\mu(dt)=\frac{dt}{t}$ on $\mathbb{R}^{+}$.\\
\begin{obs}
For a $\mu$-measurable function $f:\mathbb{R}^{+}\rightarrow \overline{\mathbb{R}}$, $q(\cdot)\in\mathcal{P}(\mathbb{R}^{+},\mu)$, and any $\lambda>0$
\begin{eqnarray*}
\rho_{q(\cdot),\mu}(\frac{f}{\lambda})&=&\displaystyle\int_{0}^{\infty}\left|\frac{f(t)}{\lambda}\right|^{q(t)}\mu(dt)=\displaystyle\int_{0}^{\infty}\left|\frac{t^{-1/q(t)}f(t)}{\lambda}\right|^{q(t)}dt\\
&=&\rho_{q(\cdot)}\left(\frac{t^{-1/q(\cdot)}f}{\lambda}\right)
\end{eqnarray*}
Thus, \begin{equation}\label{normaq,dt/t}
\|f\|_{q(\cdot),\mu}=\|t^{-1/q(\cdot)}f\|_{q(\cdot)}
\end{equation}
\end{obs}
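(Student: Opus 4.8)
The plan is to prove the identity first at the level of modulars and then pass to the Luxemburg norm. Precisely, I would show that for \emph{every} $\lambda>0$,
\[
\rho_{q(\cdot),\mu}\!\left(\frac{f}{\lambda}\right)=\rho_{q(\cdot)}\!\left(\frac{t^{-1/q(\cdot)}f}{\lambda}\right),
\]
where on the right the modular is computed with respect to Lebesgue measure on $(0,\infty)$. Granting this, the sets $\{\lambda>0:\rho_{q(\cdot),\mu}(f/\lambda)\le 1\}$ and $\{\lambda>0:\rho_{q(\cdot)}((t^{-1/q(\cdot)}f)/\lambda)\le 1\}$ coincide, and taking infima gives \eqref{normaq,dt/t} at once.

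For the modular identity, I would start from the observation that $\mu(dt)=dt/t$ and Lebesgue measure have exactly the same null sets on $(0,\infty)$, so the set $\Omega_\infty=\{t>0:q(t)=\infty\}$ is the same for both, and any essential supremum over $\Omega_\infty$ is insensitive to which of the two measures we use. Splitting the modular into its integral part over $(0,\infty)\setminus\Omega_\infty$ and its $L^\infty$ part over $\Omega_\infty$: on the integral part the key step is the pointwise identity $t^{-1}=\big(t^{-1/q(t)}\big)^{q(t)}$, valid since $t>0$ and $q(t)<\infty$ there, which absorbs the weight $1/t$ from $\mu$ into the integrand,
\[
\int_{(0,\infty)\setminus\Omega_\infty}\Big|\frac{f(t)}{\lambda}\Big|^{q(t)}\frac{dt}{t}=\int_{(0,\infty)\setminus\Omega_\infty}\Big|\frac{t^{-1/q(t)}f(t)}{\lambda}\Big|^{q(t)}\,dt;
\]
on $\Omega_\infty$ the convention $1/q(t)=0$ gives $t^{-1/q(t)}=1$, so the $L^\infty$ contribution is literally unchanged. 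Adding the two pieces yields the modular identity.

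I do not expect any genuine obstacle: the statement is just a reweighting that is entirely absorbed into the variable exponent, and the only thing that needs a little care is the bookkeeping on $\Omega_\infty$ — using $1/\infty=0$ so that $t^{-1/q(t)}=1$ there, and the mutual absolute continuity of $\mu$ and Lebesgue measure so that the $L^\infty$ term transforms correctly alongside the integral term.
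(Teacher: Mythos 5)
Your argument is correct and follows essentially the same route as the paper: absorb the weight $1/t$ into the integrand via $t^{-1}=\bigl(t^{-1/q(t)}\bigr)^{q(t)}$ to get the modular identity for every $\lambda>0$, then pass to the Luxemburg norm by equality of the admissible sets of $\lambda$. Your extra bookkeeping on $\Omega_\infty$ is a harmless refinement of the paper's computation, which simply writes the modular as the integral term.
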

Next, we present an useful technical results for the measure $\mu$.
\begin{lem}\label{desigualdadesnormamu}
For $q(\cdot)\in\mathcal{P}(\mathbb{R}^{+},\mu)$
\begin{enumerate}
\item[i)]  For any $\alpha,\beta>0$ and $q_{+}<\infty,\; \|t^{\alpha}e^{-t\beta}\|_{q(\cdot),\mu}<\infty.$
\item[ii)]   For any $  \alpha>0, \; \|t^{\alpha}\chi_{(0,1]}\|_{q(\cdot),\mu}<\infty.$
\item[iii)] For any $ \alpha>0, \; \|t^{-\alpha}\chi_{(1,\infty)}\|_{q(\cdot),\mu}<\infty.$
\item[iv)] For any $ t_{0}>0, \; (\ln 2)^{\frac{1}{q_{-}}}\leq\|\chi_{[t_{0}/2,t_{0}]}\|_{q(\cdot),\mu}\leq 1.$
\end{enumerate}
\end{lem}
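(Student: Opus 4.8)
The plan is to reduce each item to an elementary estimate for the modular $\rho_{q(\cdot),\mu}$ and then invoke two standard facts about the variable Lebesgue norm (see \cite{LibroDenHarjHas} or \cite{dcruz}): if $\rho_{q(\cdot),\mu}(f)<\infty$ then $\|f\|_{q(\cdot),\mu}<\infty$, and if $\rho_{q(\cdot),\mu}(f)\le 1$ then $\|f\|_{q(\cdot),\mu}\le 1$ (the easy half of the unit-ball property). The computational engine will be the trivial pointwise inequality, valid for $x\ge 0$,
\begin{equation*}
x^{q(t)}\le x^{q_{-}}\ \ (0\le x\le 1),\qquad x^{q(t)}\le x^{q_{+}}\ \ (x\ge 1),\qquad\text{so}\qquad x^{q(t)}\le x^{q_{-}}+x^{q_{+}},
\end{equation*}
the second bound requiring $q_{+}<\infty$. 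Substituting these into $\rho_{q(\cdot),\mu}(f)=\int_{0}^{\infty}|f(t)|^{q(t)}\,\frac{dt}{t}$ (the $\Omega_{\infty}$-term being harmless for the bounded functions at hand, and absent when $q_{+}<\infty$) reduces every modular to ordinary integrals of the form $\int t^{a}e^{-bt}\,\frac{dt}{t}$.

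Items (ii) and (iii) I would dispatch at once. For $\alpha>0$, on $(0,1]$ one has $0\le t^{\alpha}\le 1$, so $t^{\alpha q(t)}\le t^{\alpha q_{-}}$ and
\begin{equation*}
\rho_{q(\cdot),\mu}\big(t^{\alpha}\chi_{(0,1]}\big)\le\int_{0}^{1}t^{\alpha q_{-}-1}\,dt=\frac{1}{\alpha q_{-}}<\infty ,
\end{equation*}
hence $\|t^{\alpha}\chi_{(0,1]}\|_{q(\cdot),\mu}<\infty$; symmetrically, on $(1,\infty)$ one has $0\le t^{-\alpha}\le 1$, $t^{-\alpha q(t)}\le t^{-\alpha q_{-}}$, and $\rho_{q(\cdot),\mu}(t^{-\alpha}\chi_{(1,\infty)})\le\int_{1}^{\infty}t^{-\alpha q_{-}-1}\,dt=1/(\alpha q_{-})<\infty$, which gives (iii).

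For (i), where $q_{+}<\infty$ is now assumed, I would apply $x^{q(t)}\le x^{q_{-}}+x^{q_{+}}$ with $x=t^{\alpha}e^{-\beta t}$; since $\int_{0}^{\infty}(t^{\alpha}e^{-\beta t})^{r}\,\frac{dt}{t}=\Gamma(\alpha r)\,(\beta r)^{-\alpha r}$ for every $r\ge 1$ (change of variables $u=\beta r t$), this yields
\begin{equation*}
\rho_{q(\cdot),\mu}\big(t^{\alpha}e^{-\beta t}\big)\le\frac{\Gamma(\alpha q_{-})}{(\beta q_{-})^{\alpha q_{-}}}+\frac{\Gamma(\alpha q_{+})}{(\beta q_{+})^{\alpha q_{+}}}<\infty ,
\end{equation*}
whence the norm is finite. (Equivalently one may split at $t=1$: on $(0,1]$ bound $t^{\alpha}e^{-\beta t}\le t^{\alpha}$ and use (ii); on $(1,\infty)$ use the super-polynomial decay of $e^{-\beta t}$, part (iii), and the lattice property $0\le g\le h\Rightarrow\|g\|_{q(\cdot),\mu}\le\|h\|_{q(\cdot),\mu}$.)

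For (iv), which is the only item needing the two-sided estimate, I would argue as follows. Upper bound: $\chi_{[t_{0}/2,t_{0}]}$ takes only the values $0$ and $1$, so
\begin{equation*}
\rho_{q(\cdot),\mu}\big(\chi_{[t_{0}/2,t_{0}]}\big)=\int_{t_{0}/2}^{t_{0}}\frac{dt}{t}=\ln 2\le 1 ,
\end{equation*}
and therefore $\|\chi_{[t_{0}/2,t_{0}]}\|_{q(\cdot),\mu}\le 1$. Lower bound: for any $\lambda$ with $0<\lambda<(\ln 2)^{1/q_{-}}$ — which forces $\lambda<1$, since $\ln 2<1$ and $q_{-}\ge 1$ — one has $1/\lambda>1$, hence $(1/\lambda)^{q(t)}\ge(1/\lambda)^{q_{-}}$ on $[t_{0}/2,t_{0}]$, and so
\begin{equation*}
\rho_{q(\cdot),\mu}\!\left(\frac{\chi_{[t_{0}/2,t_{0}]}}{\lambda}\right)\ge\lambda^{-q_{-}}\int_{t_{0}/2}^{t_{0}}\frac{dt}{t}=\lambda^{-q_{-}}\ln 2>1 ,
\end{equation*}
the last inequality because $\lambda^{q_{-}}<\ln 2$. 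Thus no such $\lambda$ belongs to $\{\lambda>0:\rho_{q(\cdot),\mu}(\chi_{[t_{0}/2,t_{0}]}/\lambda)\le 1\}$, so the infimum defining the norm is $\ge(\ln 2)^{1/q_{-}}$. None of these steps is deep; the only real care needed is the bookkeeping of whether, on a given region, the function in question is $\le 1$ or $\ge 1$, so that the correct constant exponent ($q_{-}$ or $q_{+}$) is used — and it is precisely this dichotomy that forces the hypothesis $q_{+}<\infty$ in (i), where $t^{\alpha}e^{-\beta t}$ exceeds $1$ on a bounded interval.
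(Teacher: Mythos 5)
Your proof is correct and takes essentially the same route as the paper: you bound the modular by freezing the variable exponent at $q_{-}$ or $q_{+}$ and reduce (i)--(iii) to elementary Gamma-type integrals, and for (iv) you use $\rho_{q(\cdot),\mu}(\chi_{[t_{0}/2,t_{0}]})=\ln 2\le 1$ for the upper bound and the observation that $0<\lambda<(\ln 2)^{1/q_{-}}$ forces $\rho_{q(\cdot),\mu}(\chi_{[t_{0}/2,t_{0}]}/\lambda)\ge\lambda^{-q_{-}}\ln 2>1$ for the lower bound, exactly as in the paper. The only cosmetic difference is in (i), where you split according to whether $t^{\alpha}e^{-\beta t}$ is $\le 1$ or $\ge 1$ rather than splitting the integral at $t=1$; like the paper, you effectively disregard the $\Omega_{\infty}$ term, which is null in the intended setting $q(\cdot)\in\mathcal{P}_{0,\infty}$ since there $q_{+}<\infty$.
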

\begin{proof} Let us prove $i)$. Set $f=t^{\alpha}e^{-t\beta}$\\
\begin{eqnarray*}
\rho_{q(\cdot),\mu}(f)&=&\int_{0}^{\infty}|f(t)|^{q(t)}\mu(dt)=\int_{0}^{1}|t^{\alpha}e^{-t\beta}|^{q(t)}\frac{dt}{t}+\int_{1}^{\infty}|t^{\alpha}e^{-t\beta}|^{q(t)}\frac{dt}{t}
\end{eqnarray*}
Now, \begin{eqnarray*}
\int_{0}^{1}|t^{\alpha}e^{-t\beta}|^{q(t)}\frac{dt}{t}&=&\int_{0}^{1}t^{\alpha q(t)-1}e^{-t\beta q(t)}dt \leq\int_{0}^{1}t^{\alpha -1}dt<\infty,
\end{eqnarray*}
 since $ \alpha,\beta>0$ and $0\leq t\leq 1.$
On the other hand, by making the change of variables $u=t\beta q_{-}$
\begin{eqnarray*}
\int_{1}^{\infty}|t^{\alpha}e^{-t\beta}|^{q(t)}\frac{dt}{t}&=&\int_{1}^{\infty}t^{\alpha q(t)}e^{-t\beta q(t)}\frac{dt}{t}\\
&\leq&\int_{1}^{\infty}t^{\alpha q_{+}}e^{-t\beta q_{-}}\frac{dt}{t}
\leq\int_{0}^{\infty}t^{\alpha q_{+}}e^{-t\beta q_{-}}\frac{dt}{t}\\
&=&\int_{0}^{\infty}(\frac{u}{\beta q_{-}})^{\alpha q_{+}}e^{-u}\frac{du}{u}
=\frac{1}{(\beta q_{-})^{\alpha q_{+}}}\int_{0}^{\infty}u^{\alpha q_{+}-1}e^{-u}du\\
&=&\frac{1}{(\beta q_{-})^{\alpha q_{+}}}\Gamma(\alpha q_{+})<\infty .
\end{eqnarray*}
 since $ \alpha,\beta>0$ and $q_{+}<\infty$. Thus, $\rho_{q(\cdot),\mu}(f)<\infty$, and therefore
 $$\|t^{\alpha}e^{-t\beta}\|_{q(\cdot),\mu}<\infty.$$

The proof  of $ii)$ and $iii)$ are immediate. Now, in order to prove $iv)$, set $g=\chi_{[t_{0}/2,t_{0}]}$\\
\begin{eqnarray*}
\rho_{q(\cdot),\mu}(g)&=&\int_{0}^{\infty}|g(t)|^{q(t)}\mu(dt)=\int_{t_{0}/2}^{t_{0}}\frac{dt}{t}=\ln 2<1.
\end{eqnarray*}
Then,  $\lambda\geq 1$ implies  $\rho_{q(\cdot),\mu}(\frac{g}{\lambda})\leq \rho_{q(\cdot),\mu}(g)\leq 1$.
Thus, $\|g\|_{q(\cdot),\mu}\leq 1$.\\

 On the other hand,
taking $0<\lambda<1$
\begin{eqnarray*}
\rho_{q(\cdot),\mu}(\frac{g}{\lambda})&=&\int_{t_{0}/2}^{t_{0}}\lambda^{-q(t)}\frac{dt}{t}\geq \int_{t_{0}/2}^{t_{0}}\lambda^{-q_{-}}\frac{dt}{t}=\lambda^{-q_{-}}(\ln 2)\\
\end{eqnarray*}
then $\lambda<(\ln 2)^{1/q_{-}}$ implies $ \rho_{q(\cdot),\mu}(\frac{g}{\lambda})>1$.
Therefore, $\rho_{q(\cdot),\mu}(\frac{g}{\lambda})\leq 1$ implies $\lambda\geq(\ln 2)^{1/q_{-}}$ and then
 $$\|g\|_{q(\cdot),\mu}\geq (\ln 2)^{1/q_{-}}.$$
\end{proof}

In the case $\Omega=\mathbb{R}^{+}$, we denote $\mathcal{M}_{0,\infty}$ the set of all measurable functions $p(\cdot): \mathbb{R}^{+}\rightarrow \mathbb{R}^{+} $ which satisfy the following conditions:

$i)$ $0\leq p_{-}\leq p_{+} <\infty$,\\
$ii_{0})$ there exists $p(0)=\displaystyle\lim_{x\rightarrow 0}p(x)$ and $|p(x)- p(0)|\leq \frac{A}{\ln(1/x)}, 0< x\leq 1/2$\\
$ii_{\infty})$ there exists  $p(\infty)=\displaystyle\lim_{x\rightarrow \infty}p(x)$ and $|p(x)- p(\infty)|\leq \frac{A}{\ln(x)},  x>2$.\\
 we denote  $\mathcal{P}_{0,\infty}$ the subset of functions $p(\cdot)$ such that $p_{-}\geq 1$.\\

 Let $\alpha(\cdot),\beta(\cdot)\in LH(\mathbb{R}^{+})$, bounded with \begin{equation}\label{alpha,p'}
  \displaystyle\alpha(0)<\frac{1}{p'(0)}, \alpha(\infty)<\frac{1}{p'(\infty)}
  \end{equation}
  and
  \begin{equation}\label{beta,p}
  \displaystyle\beta(0)>-\frac{1}{p(0)}, \beta(\infty)>-\frac{1}{p(\infty)}
  \end{equation}
\begin{teo}
Let $p(\cdot)\in\mathcal{P}_{0,\infty}$, $\alpha(\cdot),\beta(\cdot)\in LH(\mathbb{R}^{+})$, bounded. Then the Hardy-type inequalities
 \begin{equation}\label{hardyinequalityvariable0ax}
\left\|x^{\alpha(x)-1}\int_{0}^{x}\frac{f(y)}{y^{\alpha(y)}}dy\right\|_{p(\cdot)}\leq C_{\alpha(\cdot),p(\cdot)}\|f\|_{p(\cdot)}
 \end{equation}
  \begin{equation}\label{hardyinequalityvariablexinfty}
\left\|x^{\beta(x)}\int_{x}^{\infty}\frac{f(y)}{y^{\beta(y)+1}}dy\right\|_{p(\cdot)}\leq C_{\beta(\cdot),p(\cdot)}\|f\|_{p(\cdot)}
 \end{equation}
are valid, if and only if, $\alpha(\cdot),\beta(\cdot)$ satisfy conditions (\ref{alpha,p'}) and (\ref{beta,p})
\end{teo}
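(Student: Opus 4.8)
The plan is to prove the two implications separately; the substantive direction is that the endpoint conditions \eqref{alpha,p'} and \eqref{beta,p} are \emph{sufficient} for \eqref{hardyinequalityvariable0ax} and \eqref{hardyinequalityvariablexinfty}, while the converse follows by testing the inequalities on power functions localized near $0$ and near $\infty$. I would treat \eqref{hardyinequalityvariable0ax} in detail and obtain \eqref{hardyinequalityvariablexinfty} by the dual argument (change of variables $y=x/t$), which merely interchanges the roles of $0$ and $\infty$ and of $p$ and $p'$.

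For sufficiency, the starting point is the change of variables $y=xt$, $t\in(0,1)$, which turns the weighted Hardy average into a dilation average,
\[
x^{\alpha(x)-1}\int_{0}^{x}\frac{f(y)}{y^{\alpha(y)}}\,dy=\int_{0}^{1}f(xt)\,t^{-\alpha(xt)}\,x^{\alpha(x)-\alpha(xt)}\,dt ,
\]
followed by Minkowski's integral inequality for variable Lebesgue spaces \eqref{integralMinkowski}, which gives
\[
\Big\|x^{\alpha(x)-1}\!\int_{0}^{x}\tfrac{f(y)}{y^{\alpha(y)}}\,dy\Big\|_{p(\cdot)}\le 4\int_{0}^{1}\Big\|\,x\mapsto f(xt)\,t^{-\alpha(xt)}\,x^{\alpha(x)-\alpha(xt)}\,\Big\|_{p(\cdot)}\,dt .
\]
Everything is then reduced to bounding the inner $L^{p(\cdot)}$-norm, uniformly in $t\in(0,1)$, by $C\,\omega(t)\,\|f\|_{p(\cdot)}$ for a profile $\omega$ with $\int_{0}^{1}\omega(t)\,dt<\infty$. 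To do so I would decompose the variable $x$ into the three scales $(0,1/2]$, $[1/2,2]$, $[2,\infty)$ and, inside each, decompose further according to the scale of $xt$; on each piece the factor $t^{-\alpha(xt)}x^{\alpha(x)-\alpha(xt)}$ is rewritten via the identity $t^{-\alpha(xt)}x^{\alpha(x)-\alpha(xt)}=(xt)^{\alpha(x)-\alpha(xt)}\,t^{-\alpha(x)}$, and the exponents $\alpha(x),\alpha(xt),p(x)$ are ``frozen'' to their limiting values at $0$ or $\infty$ by means of the log-Hölder conditions, the errors being controlled (up to bounded factors) through the boundedness of the functions $s\mapsto s^{\pm C/\ln(e+1/s)}$ and $s\mapsto s^{\pm C/\ln(e+s)}$ on $(0,\infty)$; the remaining dilation of $f$ is estimated by the boundedness of the dilation operators on $L^{p(\cdot)}$, with the scaling governed by $p(0)$ and $p(\infty)$ — a standard property of $p(\cdot)\in\mathcal{P}_{0,\infty}$, established by the same type of modular computation already carried out for $\|\chi_{[t_{0}/2,t_{0}]}\|_{q(\cdot),\mu}$ in Lemma \ref{desigualdadesnormamu}. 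After these reductions $\omega(t)$ is dominated, as $t\to0^{+}$, by a finite sum of powers of the form $t^{-\alpha(0)-1/p(0)}$ and $t^{-\alpha(\infty)-1/p(\infty)}$ and is bounded near $t=1$, so $\int_{0}^{1}\omega(t)\,dt<\infty$ exactly when $\alpha(0)+1/p(0)<1$ and $\alpha(\infty)+1/p(\infty)<1$, i.e.\ exactly under \eqref{alpha,p'}. The same scheme with $y=x/t$ yields \eqref{hardyinequalityvariablexinfty} under \eqref{beta,p}. (Alternatively, one may invoke a known characterization of weighted Hardy inequalities on $L^{p(\cdot)}(\mathbb{R}^{+})$ and check that its hypotheses specialize to \eqref{alpha,p'} and \eqref{beta,p}.)

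For the necessity, I would use that near $0$ the hypotheses $p(\cdot)\in\mathcal{P}_{0,\infty}$ and $\alpha(\cdot)\in LH(\mathbb{R}^{+})$ make $p(\cdot)$ and $\alpha(\cdot)$ essentially constant, equal to $p(0)$ and $\alpha(0)$, on small intervals $(0,R)$; testing \eqref{hardyinequalityvariable0ax} against functions of the form $y^{-1/p(y)+\delta}\chi_{(0,R)}(y)$, letting $R$ be small and $\delta$ decrease to the critical value, one recovers the classical obstruction for the power-weighted Hardy inequality on $L^{p(0)}$ and concludes $\alpha(0)<1/p'(0)$; moving the base point off to $\infty$ gives $\alpha(\infty)<1/p'(\infty)$, and the mirror test functions applied to \eqref{hardyinequalityvariablexinfty} give $\beta(0)>-1/p(0)$ and $\beta(\infty)>-1/p(\infty)$.

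The main obstacle is precisely the uniformity, in both $x$ and $t$, of the estimate in the second paragraph: one cannot freeze $p(\cdot)$ and $\alpha(\cdot)$ globally, so the inner norm must be split according to where $x$ and $xt$ sit relative to $1$, and on each of the resulting pieces the scale-mismatch factor $t^{-\alpha(xt)}x^{\alpha(x)-\alpha(xt)}$ together with the dilation of $f$ must be dominated by one and the same integrable profile $\omega(t)$, with all constants independent of $x$ and $t$. This decomposition — together with the sharp scaling behavior of the dilation operators on $L^{p(\cdot)}$ — is the only delicate point; Minkowski's and Hölder's inequalities for variable Lebesgue spaces and the modular estimates of Lemma \ref{desigualdadesnormamu} are already available in exactly the forms required.
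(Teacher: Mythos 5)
First, a point of comparison: the paper does not prove this theorem at all — its ``proof'' is the citation of Theorem 3.1 and Remark 3.2 in \cite{dieningsamko}, so your parenthetical fallback (``invoke a known characterization of weighted Hardy inequalities on $L^{p(\cdot)}(\mathbb{R}^{+})$'') is in fact exactly what the paper does. Judged as a self-contained argument, your proposal has a genuine gap: all of the analytic content is concentrated in the asserted uniform bound $\bigl\|x\mapsto f(xt)\,t^{-\alpha(xt)}x^{\alpha(x)-\alpha(xt)}\bigr\|_{p(\cdot)}\leq C\,\omega(t)\,\|f\|_{p(\cdot)}$ with $\int_{0}^{1}\omega(t)\,dt<\infty$, and you do not establish it — you yourself flag it as ``the main obstacle'' and then stop. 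In particular, you lean on ``boundedness of the dilation operators on $L^{p(\cdot)}$, with the scaling governed by $p(0)$ and $p(\infty)$'' as a standard property provable ``by the same type of modular computation'' as in Lemma \ref{desigualdadesnormamu}. That lemma is a one-line modular estimate for a fixed characteristic function; the dilation estimate you need is of a different order of difficulty: for $g_{t}(x)=f(tx)$ the modular is $t^{-1}\int_{0}^{\infty}|f(u)/\lambda|^{p(u/t)}\,du$, and the exponent $p(u/t)$ differs from $p(u)$ across the whole range of $p(\cdot)$ (not only near its limit values at $0$ and $\infty$), so extracting the sharp factor $t^{-1/p(0)}+t^{-1/p(\infty)}$ uniformly in $f$ requires precisely the log-H\"older ``freezing'' bookkeeping that constitutes the proof of the Hardy inequality in \cite{dieningsamko}; it cannot be quoted as known within this paper. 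Likewise the mismatch factor $t^{-\alpha(xt)}x^{\alpha(x)-\alpha(xt)}$ is only claimed to be controllable ``up to bounded factors'': when $x$ and $xt$ lie on opposite sides of the transition region, $\alpha(x)-\alpha(xt)$ is not small, and absorbing it uses the strict endpoint inequalities \eqref{alpha,p'} in a way you describe but never carry out.

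The necessity half is likewise only indicated: testing \eqref{hardyinequalityvariable0ax} on $y^{-1/p(y)+\delta}\chi_{(0,R)}(y)$ requires two-sided estimates of the $L^{p(\cdot)}$-norms of these test functions (again a frozen-exponent computation with constants uniform as $\delta\downarrow 0$), which is plausible but not done. So what you have is a reasonable proof plan whose decisive steps are deferred; the efficient repair is the one the paper itself adopts, namely to cite \cite{dieningsamko}, and otherwise you must supply the uniform dilation and exponent-freezing estimates in full.
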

\begin{proof} For the proof  see Theorem 3.1 and Remark 3.2 in \cite{dieningsamko}.
\end{proof}

As a consequence, we obtain the Hardy's inequalities associated to the exponent $q(\cdot)\in\mathcal{P}_{0,\infty}$ and the measure $\mu$.
\begin{cor}
Let $q(\cdot)\in\mathcal{P}_{0,\infty}$ and $r>0$, then
\begin{equation}\label{hardyineq0atr}\displaystyle\left\|t^{-r}\int_{0}^{t}g(y)dy\right\|_{q(\cdot),\mu}\leq C_{r,q(\cdot)}\left\|y^{-r+1}g\right\|_{q(\cdot),\mu}
\end{equation} and
\begin{equation}\label{hardyineqtainftyr}\displaystyle\left\|t^{r}\int_{t}^{\infty}g(y)dy\right\|_{q(\cdot),\mu}\leq C_{r,q(\cdot)}\left\|y^{r+1}g\right\|_{q(\cdot),\mu}
\end{equation}
\end{cor}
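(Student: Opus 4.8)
The plan is to derive the two Hardy inequalities associated to the measure $\mu(dt)=dt/t$ from the already-stated variable-exponent Hardy inequalities \eqref{hardyinequalityvariable0ax} and \eqref{hardyinequalityvariablexinfty} (with Lebesgue measure) by choosing the parameter functions $\alpha(\cdot),\beta(\cdot)$ to be the appropriate \emph{constants} and then translating between the two norms via the identity \eqref{normaq,dt/t}, namely $\|f\|_{q(\cdot),\mu}=\|t^{-1/q(\cdot)}f\|_{q(\cdot)}$. The main point is bookkeeping of exponents: a factor $t^{-1/q(t)}$ must be moved inside each integral and absorbed into a power of $t$, and one must check that the constant exponents chosen satisfy the admissibility conditions \eqref{alpha,p'}, \eqref{beta,p} so that the quoted theorem applies.

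First I would prove \eqref{hardyineq0atr}. Starting from the left-hand side, by \eqref{normaq,dt/t},
\begin{equation*}
\left\|t^{-r}\int_{0}^{t}g(y)\,dy\right\|_{q(\cdot),\mu}
=\left\|t^{-1/q(t)}\,t^{-r}\int_{0}^{t}g(y)\,dy\right\|_{q(\cdot)}
=\left\|t^{\alpha(t)-1}\int_{0}^{t}\frac{f(y)}{y^{\alpha(y)}}\,dy\right\|_{q(\cdot)},
\end{equation*}
where I set $f(y)=y^{-1/q(y)+1}\,y^{-r+1}g(y)\cdot y^{\alpha(y)-1}$... more cleanly: choose the \emph{constant} $\alpha\equiv 0$ is not allowed since we need $\alpha(y)$ to interact with $1/q$, so instead I write $t^{-1/q(t)-r}=t^{\alpha(t)-1}$ with $\alpha(t)=1-\tfrac{1}{q(t)}-r=\tfrac{1}{q'(t)}-r$, and correspondingly set $f(y)=y^{\alpha(y)}\,y^{-1/q(y)}\big/\big(y^{-1/q(y)}\big)\cdots$. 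The honest way: put $\alpha(t)=\tfrac{1}{q'(t)}-r$ (note $\tfrac1{q'}=1-\tfrac1q\in LH$ since $\tfrac1q\in LH$, and subtracting the constant $r$ keeps it in $LH$ and bounded), and define $f$ by $y^{-\alpha(y)}f(y)=g(y)$, i.e. $f(y)=y^{\alpha(y)}g(y)=y^{1/q'(y)-r}g(y)$. Then the inner integrals match, and \eqref{hardyinequalityvariable0ax} gives the bound $C\|f\|_{q(\cdot)}=C\|y^{1/q'(y)-r}g(y)\|_{q(\cdot)}=C\|y^{-1/q(y)}\,y^{1-r}g(y)\|_{q(\cdot)}=C\|y^{-r+1}g\|_{q(\cdot),\mu}$, using \eqref{normaq,dt/t} again in the last step. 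This is exactly \eqref{hardyineq0atr}, provided the admissibility condition $\alpha(0)<\tfrac{1}{q'(0)}$ and $\alpha(\infty)<\tfrac1{q'(\infty)}$ holds; but $\alpha(0)=\tfrac1{q'(0)}-r<\tfrac1{q'(0)}$ since $r>0$, and likewise at infinity, so the condition is automatic.

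The proof of \eqref{hardyineqtainftyr} is entirely parallel using \eqref{hardyinequalityvariablexinfty}: by \eqref{normaq,dt/t} the left side is $\big\|t^{-1/q(t)+r}\int_t^\infty g(y)\,dy\big\|_{q(\cdot)}$; set $\beta(t)=r-\tfrac1{q(t)}$, which lies in $LH$ and is bounded, and define $f$ by $y^{-\beta(y)-1}f(y)=g(y)$, i.e. $f(y)=y^{\beta(y)+1}g(y)=y^{r-1/q(y)+1}g(y)$. Then \eqref{hardyinequalityvariablexinfty} applies — its hypothesis $\beta(0)>-\tfrac1{q(0)}$, $\beta(\infty)>-\tfrac1{q(\infty)}$ holds since $\beta(0)=r-\tfrac1{q(0)}>-\tfrac1{q(0)}$ as $r>0$, similarly at infinity — and yields the bound $C\|f\|_{q(\cdot)}=C\|y^{r+1-1/q(y)}g(y)\|_{q(\cdot)}=C\|y^{r+1}g\|_{q(\cdot),\mu}$, which is \eqref{hardyineqtainftyr}.

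The only mild obstacle is the regularity check: one must confirm that $1/q(\cdot)\in LH(\mathbb{R}^+)$ — equivalently $q(\cdot)\in\mathcal{P}_d^{log}$ — which is built into the standing assumption $q(\cdot)\in\mathcal{P}_{0,\infty}$ (the conditions $ii_0$, $ii_\infty$ together with log-Hölder continuity), so that the shifted functions $\alpha(t)=1/q'(t)-r$ and $\beta(t)=r-1/q(t)$ are genuinely in $LH(\mathbb{R}^+)$ and bounded, as required to invoke the theorem; and that $q(\cdot)\in\mathcal P_{0,\infty}$ gives $q_-\ge 1$ so the theorem's hypothesis $p(\cdot)\in\mathcal P_{0,\infty}$ is met with $p=q$. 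Everything else is the exponent arithmetic above. I would write the argument once in detail for \eqref{hardyineq0atr} and then say the second inequality follows \emph{mutatis mutandis} from \eqref{hardyinequalityvariablexinfty}.
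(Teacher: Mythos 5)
Your proposal is correct and follows essentially the same route as the paper: the same substitution $\alpha(t)=\tfrac{1}{q'(t)}-r$, $\beta(t)=r-\tfrac{1}{q(t)}$ with $f(y)=y^{\alpha(y)}g(y)$ (resp. $f(y)=y^{\beta(y)+1}g(y)$), the same translation of norms via $\|f\|_{q(\cdot),\mu}=\|t^{-1/q(\cdot)}f\|_{q(\cdot)}$, and the same verification that $r>0$ makes the admissibility conditions automatic before invoking the Diening--Samko theorem. Aside from the exploratory detour in your first paragraph, the argument matches the paper's proof step for step.
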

\begin{proof}
Let $\alpha(t)=-r+\frac{1}{q'(t)}=-r+1-\frac{1}{q(t)},$ for any $t\in \mathbb{R}^{+}$, $f(y)=y^{\alpha(y)}g(y), $ for any $y\in \mathbb{R}^{+}$ then $\alpha(\cdot)\in LH(\mathbb{R}^{+})$ and bounded,
$\alpha(0)=-r+\frac{1}{q'(0)}<\frac{1}{q'(0)}$
and $\alpha(\infty)=-r+\frac{1}{q'(\infty)}<\frac{1}{q'(\infty)}$.
 Then, using (\ref{normaq,dt/t}) and (\ref{hardyinequalityvariable0ax})
\begin{eqnarray*}
\left\|t^{-r}\int_{0}^{t}g(y)dy\right\|_{q(\cdot),\mu}&=&\left\|t^{-r-\frac{1}{q(t)}}\int_{0}^{t}g(y)dy\right\|_{q(\cdot)}=\left\|t^{\alpha(t)-1}\int_{0}^{t}g(y)dy\right\|_{q(\cdot)}\\
&\leq&C_{r,q(\cdot)}\left\|y^{\alpha(y)}g\right\|_{q(\cdot)}= C_{r,q(\cdot)}\left\|y^{-r+1-\frac{1}{q(y)}}g\right\|_{q(\cdot)}\\
&=&C_{r,q(\cdot)}\left\|y^{-r+1}g\right\|_{q(\cdot),\mu}.
\end{eqnarray*}
 On the other hand, by taking $\beta(t)=r-\frac{1}{q(t)}, \forall t\in \mathbb{R}^{+}$, $f(y)=y^{\beta(y)+1}g(y), \forall y\in \mathbb{R}^{+}$ then $\beta(\cdot)\in LH(\mathbb{R}^{+})$
and the proof of (\ref{hardyineqtainftyr}) is completely analogous.
\end{proof}

In what follows we will consider only Lebesgue variable spaces with respect to the Gaussian measure $\gamma_d,$ $L^{p(\cdot)}(\mathbb{R}^{d},\gamma_d).$  The next condition was introduced by E. Dalmasso and R. Scotto in \cite{DalSco} and it is crucial to deal with the Gaussian measure.

\begin{defi}\label{defipgamma}
Let $p(\cdot)\in\mathcal{P}(\mathbb{R}^{d},\gamma_{d})$, we say that $p(\cdot)\in\mathcal{P}_{\gamma_{d}}^{\infty}(\mathbb{R}^{d})$ if there exist constants $C_{\gamma_{d}}>0$ and $p_{\infty}\geq1$ such that
\begin{equation}
   |p(x)-p_{\infty}|\leq\frac{C_{\gamma_{d}}}{|x|^{2}},
\end{equation}
for $x\in\mathbb{R}^{d}\setminus\{(0,0,\ldots,0)\}.$
\end{defi}

\begin{obs}\label{obs4.2}
 It can be proved that if $p(\cdot)\in\mathcal{P}_{\gamma_{d}}^{\infty}(\mathbb{R}^{d})$, then $p(\cdot)\in LH_{\infty}(\mathbb{R}^{d})$.
\end{obs}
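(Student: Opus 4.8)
The plan is to check the defining inequality of $LH_{\infty}(\mathbb{R}^{d})$ directly, taking as base point $x_{0}=(0,\ldots,0)$ and keeping the same limiting value $p_{\infty}$ furnished by the hypothesis $p(\cdot)\in\mathcal{P}_{\gamma_{d}}^{\infty}(\mathbb{R}^{d})$. The underlying reason the implication holds is that, away from the origin, the Gaussian decay rate $|x|^{-2}$ is far stronger than the logarithmic rate $(\log(e+|x|))^{-1}$ demanded by $LH_{\infty}$; so I would split $\mathbb{R}^{d}$ into $\{|x|\ge 1\}$ and $\{|x|<1\}$ and treat each region by an elementary comparison: on the first region by exploiting the extra decay, on the second by noting that $\log(e+|x|)$ is there bounded above and below by constants.

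Concretely, I would first record the elementary fact that
\[
M:=\sup_{t\ge 1}\frac{\log(e+t)}{t^{2}}<\infty ,
\]
valid because $t\mapsto \log(e+t)/t^{2}$ is continuous on $[1,\infty)$ and tends to $0$ as $t\to\infty$. Then, for $|x|\ge 1$, the definition of $\mathcal{P}_{\gamma_{d}}^{\infty}(\mathbb{R}^{d})$ gives
\[
|p(x)-p_{\infty}|\le \frac{C_{\gamma_{d}}}{|x|^{2}}=C_{\gamma_{d}}\,\frac{\log(e+|x|)}{|x|^{2}}\cdot\frac{1}{\log(e+|x|)}\le \frac{C_{\gamma_{d}}\,M}{\log(e+|x|)} .
\]
For $|x|<1$ one has $1\le\log(e+|x|)<\log(e+1)$, hence $\tfrac{1}{\log(e+|x|)}\ge\tfrac{1}{\log(e+1)}$; since the exponents under consideration are bounded, $|p(x)-p_{\infty}|\le p_{+}+p_{\infty}$, and therefore
\[
|p(x)-p_{\infty}|\le (p_{+}+p_{\infty})\,\log(e+1)\cdot\frac{1}{\log(e+1)}\le \frac{(p_{+}+p_{\infty})\log(e+1)}{\log(e+|x|)} .
\]
Choosing $C_{2}:=\max\{C_{\gamma_{d}}M,\ (p_{+}+p_{\infty})\log(e+1)\}$ then yields $|p(x)-p_{\infty}|\le C_{2}/\log(e+|x|)$ for every $x\in\mathbb{R}^{d}$, which is precisely $p(\cdot)\in LH_{\infty}(\mathbb{R}^{d})$ with base point $x_{0}=0$.

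The only delicate point, and the reason the two-region split is unavoidable, is the behaviour near the origin: there the Gaussian condition carries essentially no information, since the bound $C_{\gamma_{d}}/|x|^{2}$ degenerates, so one must instead use the boundedness of $p(\cdot)$ (i.e. $p_{+}<\infty$, which holds for the exponents considered throughout) to keep $|p(x)-p_{\infty}|$ under control on the unit ball. Apart from that, the argument is entirely elementary, the one substantive ingredient being the comparison $|x|^{-2}\lesssim(\log(e+|x|))^{-1}$ on $\{|x|\ge 1\}$ encoded in the finiteness of $M$.
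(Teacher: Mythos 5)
Your argument is correct, and there is nothing in the paper to compare it against: the observation is stated with ``it can be proved that'' and no proof is given (the fact is essentially imported from \cite{DalSco}). Your proof is the natural one — on $\{|x|\ge 1\}$ the quadratic decay beats the logarithmic one via $M=\sup_{t\ge 1}\log(e+t)/t^{2}<\infty$, and on the unit ball one falls back on boundedness of the exponent — and the estimates you write are all valid.

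The one point worth tightening is the step $|p(x)-p_{\infty}|\le p_{+}+p_{\infty}$ on $\{|x|<1\}$, which uses $p_{+}<\infty$. This is not a consequence of Definition~\ref{defipgamma} alone: the class $\mathcal{P}(\mathbb{R}^{d},\gamma_{d})$ admits unbounded exponents, and the implication genuinely fails without some boundedness near the origin, since for example $p(x)=1+|x|^{-2}$ belongs to $\mathcal{P}_{\gamma_{d}}^{\infty}(\mathbb{R}^{d})$ with $p_{\infty}=1$ but is unbounded on the unit ball, whereas any function in $LH_{\infty}(\mathbb{R}^{d})$ satisfies $|p(x)-p_{\infty}|\le C_{2}$ everywhere. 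You do flag this and appeal to ``the exponents considered throughout''; to make that appeal precise, note that every result in the paper assumes in addition $p(\cdot)\in LH_{0}(\mathbb{R}^{d})$, and local log-H\"older continuity forces $p$ to be continuous, hence bounded on the closed unit ball, while the inequality of Definition~\ref{defipgamma} bounds $p$ by $p_{\infty}+C_{\gamma_{d}}$ outside it; together these give $p_{+}<\infty$, after which your two-region argument closes the proof. So the proposal is sound as a proof of the observation in the setting actually used in the paper, with the caveat that strictly speaking the hypothesis $p_{+}<\infty$ (or $LH_{0}$) must be added to the bare statement.
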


\section{The main results}
In this section we are going to define the  variable Gaussian Besov-Lipschitz spaces and the  variable Gaussian Triebel-Lizorkin spaces, which are the main goal of the paper.\\

The following two technical results are needed for defining  variable Gaussian Besov-Lipschitz spaces.
\begin{lem}\label{lematec3}
Let $p(\cdot)\in\mathcal{P}_{\gamma_{d}}^{\infty}(\mathbb{R}^{d})\cap LH_{0}(\mathbb{R}^{d})$ and $f\in L^{p(\cdot)}(\gamma_d), \alpha\geq 0$ y $k,l$ integers greater than
 $\alpha$, then
$$\left\|\frac{\partial^{k}u(\cdot,t)}{\partial
t^{k}}\right\|_{p(\cdot),\gamma_d}\leq A_{k}t^{-k+\alpha}\, \mbox{if and only if}
\, \,\left\|\frac{\partial^{l}u(\cdot,t)}{\partial
t^{l}}\right\|_{p(\cdot),\gamma_d}\leq A_{l}t^{-l+\alpha}.$$ Moreover, if
$A_{k}(f),A_{l}(f)$  are the smallest constants in the inequalities above then there exist constants
$A_{k,l,\alpha,p(\cdot)}$ and $D_{k,l,\alpha}$ such that
$$ A_{k,l,\alpha,p(\cdot)} A_{k}(f)\leq A_{l}(f)\leq
D_{k,l,\alpha}A_{k}(f),$$ for all $f\in L^{p(\cdot)}(\gamma_d)$.
\end{lem}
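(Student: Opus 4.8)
The plan is to prove the two implications separately while tracking the constants. The case $k=l$ being trivial, after possibly interchanging $k$ and $l$ we may assume $k<l$; one implication then \emph{descends} (from the bound on $\partial_t^{l}u$ to the one on $\partial_t^{k}u$) and the other \emph{ascends}. Write $u(\cdot,t)=P_tf$. Since $\gamma_d$ is a probability measure and $p(\cdot)\geq1$, we have the embedding $L^{p(\cdot)}(\gamma_d)\hookrightarrow L^{1}(\gamma_d)$, so Lemma~\ref{lematec5} is available for $f$ and for every $\partial_t^{k}u(\cdot,s)$, and $T^{\ast}f<\infty$ $\gamma_d$-a.e. The computational heart is the reproducing identity $\partial_t^{k}u(\cdot,t)=P_{t/2}\bigl(\partial_t^{k}u(\cdot,t/2)\bigr)$, proved by expanding $f$ in Hermite polynomials and using \eqref{PHHerm}, then extending from polynomials to $L^{1}(\gamma_d)$ by density (both sides being, for fixed $t>0$, bounded operators on $L^{1}(\gamma_d)$, e.g.\ by \eqref{02} and Corollary~\ref{corol1}); differentiating $m:=l-k$ further times in $t$ and using the semigroup property once more turns this into $\partial_t^{l}u(\cdot,t)=\bigl(\partial_r^{m}P_{r}\bigr)\big|_{r=t/2}\bigl(\partial_t^{k}u(\cdot,t/2)\bigr)$.

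For the descending implication, assume $\|\partial_t^{l}u(\cdot,t)\|_{p(\cdot),\gamma_d}\leq A_{l}t^{-l+\alpha}$ for all $t>0$. By Lemma~\ref{lematec5}, $|\partial_s^{j}u(x,s)|\leq C_{j}T^{\ast}f(x)s^{-j}\to0$ as $s\to\infty$ for $\gamma_d$-a.e.\ $x$ whenever $j\geq1$, so for $j=l,l-1,\dots,k+1$ we may write $\partial_t^{j-1}u(\cdot,t)=-\int_{t}^{\infty}\partial_s^{j}u(\cdot,s)\,ds$. Applying Minkowski's integral inequality \eqref{integralMinkowski} and the elementary identity $\int_{t}^{\infty}s^{-j+\alpha}\,ds=\frac{t^{-(j-1)+\alpha}}{j-1-\alpha}$ (finite at $\infty$ because $j-1\geq k>\alpha$) gives $\|\partial_t^{j-1}u(\cdot,t)\|_{p(\cdot),\gamma_d}\leq\frac{4A_{j}}{j-1-\alpha}\,t^{-(j-1)+\alpha}$; iterating these $l-k$ steps yields $\|\partial_t^{k}u(\cdot,t)\|_{p(\cdot),\gamma_d}\leq D_{k,l,\alpha}A_{l}t^{-k+\alpha}$ with $D_{k,l,\alpha}=\prod_{j=k+1}^{l}\frac{4}{j-1-\alpha}$, depending only on $k,l,\alpha$. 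Hence $A_{k}(f)\leq D_{k,l,\alpha}A_{l}(f)$.

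For the ascending implication, assume $\|\partial_t^{k}u(\cdot,t)\|_{p(\cdot),\gamma_d}\leq A_{k}t^{-k+\alpha}$ for all $t>0$. Lemma~\ref{lematec5} applied with $m=l-k$ to $g:=\partial_t^{k}u(\cdot,t/2)\in L^{p(\cdot)}(\gamma_d)\subset L^{1}(\gamma_d)$ bounds the right-hand side of the last identity pointwise in $x$ by $C_{m}(t/2)^{-m}T^{\ast}g(x)$, whence, taking $L^{p(\cdot)}(\gamma_d)$-norms,
\[
\Bigl\|\frac{\partial^{l}u}{\partial t^{l}}(\cdot,t)\Bigr\|_{p(\cdot),\gamma_d}\leq C_{m}\,(t/2)^{-m}\bigl\|T^{\ast}\bigl(\partial_t^{k}u(\cdot,t/2)\bigr)\bigr\|_{p(\cdot),\gamma_d}.
\]
Here one invokes the boundedness of the Ornstein--Uhlenbeck maximal operator $T^{\ast}$ on $L^{p(\cdot)}(\gamma_d)$ — which is exactly what the hypothesis $p(\cdot)\in\mathcal{P}_{\gamma_d}^{\infty}(\mathbb{R}^d)\cap LH_{0}(\mathbb{R}^d)$ guarantees, cf.\ \cite{DalSco} — to estimate the last factor by $C_{p(\cdot)}\|\partial_t^{k}u(\cdot,t/2)\|_{p(\cdot),\gamma_d}\leq C_{p(\cdot)}A_{k}(t/2)^{-k+\alpha}$, so that $\|\partial_t^{l}u(\cdot,t)\|_{p(\cdot),\gamma_d}\leq C_{k,l,\alpha,p(\cdot)}A_{k}t^{-l+\alpha}$, i.e.\ $A_{l}(f)\leq C_{k,l,\alpha,p(\cdot)}A_{k}(f)$. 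Combined with the descending bound, this gives $D_{k,l,\alpha}^{-1}A_{k}(f)\leq A_{l}(f)\leq C_{k,l,\alpha,p(\cdot)}A_{k}(f)$, which is the asserted two-sided comparison (after renaming the constants).

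The point I expect to be delicate is everything around the reproducing identity: one must legitimately interchange the $t$-derivatives with the integrals defining $P_t$ and with the semigroup composition, extend the identity from Hermite polynomials to arbitrary $f\in L^{p(\cdot)}(\gamma_d)$, and — in the descending step — verify that $\partial_s^{j}u(\cdot,s)$ decays fast enough at infinity for the boundary terms to be dropped, all of which rest on Lemma~\ref{lematec5} and the Hermite expansion. It is also essential that in the ascending step the hypothesis on $\partial_t^{k}u$ be passed through the smoothing factor $\partial_r^{m}P_{r}$ before taking norms: a direct application of Lemma~\ref{lematec5} to $f$ would only yield the weaker decay $t^{-l}$ in place of $t^{-l+\alpha}$.
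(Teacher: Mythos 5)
Your argument is correct, and half of it coincides with the paper's: the ``descending'' step (from the bound on the higher derivative to the lower one) is exactly the paper's direct implication --- kill the boundary term at infinity via Lemma \ref{lematec5}, write $\partial_t^{j-1}u(\cdot,t)=-\int_t^\infty \partial_s^{j}u(\cdot,s)\,ds$, apply Minkowski's integral inequality \eqref{integralMinkowski}, and iterate, obtaining a constant independent of $p(\cdot)$. Where you diverge is the other direction. The paper starts from the same semigroup identity $u(\cdot,t_1+t_2)=P_{t_1}(P_{t_2}f)$ with $t_1=t_2=t/2$, but it estimates the extra $t_1$-derivatives explicitly through the subordination formula, i.e.\ via \eqref{PoissonkDev}, Corollary \ref{corol1} and Minkowski's inequality, so that the only operator-theoretic input is the \emph{uniform} $L^{p(\cdot)}(\gamma_d)$-boundedness of the single operators $T_s$. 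You instead apply Lemma \ref{lematec5} to $g=\partial_t^{k}u(\cdot,t/2)$ and then invoke the $L^{p(\cdot)}(\gamma_d)$-boundedness of the maximal operator $T^{\ast}$; this is perfectly legitimate (it is the tool the paper itself uses in Lemma \ref{TLind} and Proposition \ref{incluTriebel}), and your observation that the hypothesis must be routed through the smoothing factor rather than applied to $f$ directly is exactly the right point, but it spends a strictly stronger result than necessary for this lemma: the paper's route needs only semigroup boundedness, yours needs the maximal-function theorem. Two small repairs: the correct reference for the boundedness of $T^{\ast}$ (and of $T_t$, $P_t$) on $L^{p(\cdot)}(\gamma_d)$ under $p(\cdot)\in\mathcal{P}_{\gamma_d}^{\infty}(\mathbb{R}^d)\cap LH_0(\mathbb{R}^d)$ is \cite{MorPinUrb}, not \cite{DalSco} (the latter concerns Riesz transforms); and the justification of the reproducing identity need not pass through Hermite expansions and density --- the paper simply differentiates $u(x,t_1+t_2)=P_{t_1}(u(x,t_2))$ under the integral sign by dominated convergence, which is enough. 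Up to the harmless relabelling of which of $k,l$ is the larger index (and hence which comparison constant carries the $p(\cdot)$-dependence), your two-sided estimate is the one asserted.
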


\begin{proof} Let us suppose without loss of generality that $k\geq l$. We start by proving
the direct implication. For this we use the
representation of the Poisson-Hermite semigroup  (\ref{02}), this is,
$$P_{t}f(x)=\int_{0}^{+\infty}T_{s}f(x)\mu_{t}^{(1/2)}(ds).$$
Then, by differentiating $k$-times with respect to $t$ and by using the dominated convergence theorem, we get

$$\frac{\partial^{k}P_{t}f(x)}{\partial
t^{k}}=\int_{0}^{+\infty}T_{s}f(x)
\frac{\partial^{k}\mu_{t}^{(1/2)}}{\partial t^{k}}(ds).$$

By using Lemma  \ref{lematec5}, it's easy to prove that for all
$m\in \mathbb{N}$
$$\lim_{t\rightarrow +\infty}\frac{\partial^{m}P_{t}f(x)}{\partial
t^{m}}=0.$$ Now, given $n\in \mathbb{N}$, $ n>\alpha$
\begin{eqnarray*}
-\int_{t}^{+\infty}\frac{\partial^{n+1}P_{s}f(x)}{\partial
s^{n+1}}ds&=&-\lim_{s\rightarrow
+\infty}\frac{\partial^{n}P_{s}f(x)}{\partial
s^{n}}+\frac{\partial^{n}P_{t}f(x)}{\partial t^{n}}\\
&=&\frac{\partial^{n}P_{t}f(x)}{\partial t^{n}}.
\end{eqnarray*}
Thus, for Minkowski's  integral inequality (\ref{integralMinkowski})
\begin{eqnarray*}
\left\|\frac{\partial^{n}u(\cdot,t)}{\partial t^{n}}\right\|_{p(\cdot),\gamma_d}
&\leq&4 \int_{t}^{+\infty}\left\|\frac{\partial^{n+1}u(\cdot,s)}{\partial
s^{n+1}}\right\|_{p(\cdot),\gamma_d}ds
\leq 4\int_{t}^{+\infty}A_{n+1}(f)s^{-(n+1)+\alpha}ds\\
&=&4\frac{A_{n+1}(f)}{n-\alpha}t^{-n+\alpha}.
\end{eqnarray*}
Therefore
$$A_{n}(f)\leq 4\frac{A_{n+1}(f)}{n-\alpha},$$ and, since
$n>\alpha$ is arbitrary, then, by using the above result
$k-l$ times, we obtain
\begin{eqnarray*}
A_{l}(f)&\leq&4\frac{A_{l+1}(f)}{l-\alpha}\leq 4^{2}\frac{A_{l+2}(f)}{(l-\alpha)(l+1-\alpha)}\\
&\leq&...\leq 4^{k-l}\frac{A_{k}(f)}{(l-\alpha)(l+1-\alpha)...(k-1-\alpha)}= D_{k,l,\alpha}A_{k}(f).
\end{eqnarray*}
To prove the converse, we use again the
representation (\ref{02}) and we obtain that
$$u(x,t_{1}+t_{2})=P_{t_{1}}(P_{t_{2}}f)(x)=\displaystyle\int_{0}^{+\infty}T_{s}(P_{t_{2}}f)(x)\mu_{t_{1}}^{\frac{1}{2}}(ds).$$
Thus, taking $t=t_{1}+t_{2}$ and differentiating $l$ times with
respect to $t_{2}$ and $k-l$ times with respect to $t_{1}$, we get
\begin{equation}\label{PoissonkDev}
\displaystyle\frac{\partial^{k}u(x,t)}{\partial
t^{k}}=\int_{0}^{+\infty}T_{s}
(\frac{\partial^{l}P_{t_{2}}f(x)}{\partial
t_{2}^{l}})\frac{\partial^{k-l}\mu_{t_{1}}^{\frac{1}{2}}}{\partial
t_{1}^{k-l}} (ds).
\end{equation}
 Then, by Corollary \ref{corol1},  Minkowski's integral inequality (\ref{integralMinkowski}) and the $L^{p(\cdot)}$-boundedness
 of the Ornstein-Uhlenbeck semigroup (see \cite{MorPinUrb}), we get
\begin{eqnarray*}
\left\|\frac{\partial^{k}u(\cdot,t)}{\partial
t^{k}}\right\|_{p(\cdot),\gamma_d}&\leq&4\int_{0}^{+\infty}\left\|T_{s}
\left(\frac{\partial^{l}P_{t_{2}}f}{\partial
t_{2}^{l}}\right)\right\|_{p(\cdot),\gamma_d}\left|\frac{\partial^{k-l}
\mu_{t_{1}}^{\frac{1}{2}}}{\partial
t_{1}^{k-l}}(ds)\right|\\
&\leq&4C_{p(\cdot)}\left\|\frac{\partial^{l}P_{t_{2}}f}{\partial
t_{2}^{l}}\right\|_{p(\cdot),\gamma_d}\int_{0}^{+\infty}\left|\frac{\partial^{k-l}
\mu_{t_{1}}^{\frac{1}{2}}}{\partial
t_{1}^{k-l}}(ds)\right|\\
&\leq&4C_{p(\cdot)}\left\|\frac{\partial^{l}}{\partial
t_{2}^{l}}P_{t_{2}}f\right\|_{p(\cdot),\gamma_d}C_{k-l}t_{1}^{l-k} \leq 4C_{p(\cdot)}
A_{l}(f)C_{k-l}t_{2}^{-l+\alpha}t_{1}^{l-k}.
\end{eqnarray*}
Therefore, taking $t_{1}=t_{2}=\frac{t}{2}$,
$$\displaystyle\left\|\frac{\partial^{k}u(\cdot,t)}{\partial
t^{k}}\right\|_{p(\cdot),\gamma_d}\leq 4C_{p(\cdot)}
A_{l}(f)C_{k-l}(\frac{t}{2})^{-k+\alpha}.$$ Thus
$$\displaystyle
A_{k}(f)\leq 4C_{p(\cdot)}\frac{C_{k-l}}{2^{-k+\alpha}} A_{l}(f).$$
\end{proof}

\begin{lem}\label{lematec4}
Let $p(\cdot)\in\mathcal{P}_{\gamma_{d}}^{\infty}(\mathbb{R}^{d})\cap LH_{0}(\mathbb{R}^{d})$ and $q(\cdot)\in\mathcal{P}_{0,\infty}$. Let $\alpha\geq 0$ and $k,l$ integers greater than $\alpha$. Then
$$\left\|t^{k-\alpha}\left\|\frac{\partial^{k}u(\cdot,t)}{\partial
t^{k}}\right\|_{p(\cdot),\gamma_d}\right\|_{q(\cdot),\mu}<\infty$$
if and only if
$$\left\|t^{l-\alpha}\left\|\frac{\partial^{l}u(\cdot,t)}{\partial
t^{l}}\right\|_{p(\cdot),\gamma_d}\right\|_{q(\cdot),\mu}<\infty.
 $$ Moreover, there exist constants $A_{k,l,\alpha,p(\cdot)}$ and $D_{k,l,\alpha,q(\cdot)}$ such that
\begin{eqnarray*}
D_{k,l,\alpha,q(\cdot)}\left\|t^{l-\alpha}\left\|\frac{\partial^{l}u(\cdot,t)}{\partial
t^{l}}\right\|_{p(\cdot),\gamma_d}\right\|_{q(\cdot),\mu}
&\leq&\left\|t^{k-\alpha}\left\|\frac{\partial^{k}u(\cdot,t)}{\partial
t^{k}}\right\|_{p(\cdot),\gamma_d}\right\|_{q(\cdot),\mu}\\
&\leq&A_{k,l,\alpha,p(\cdot)}\left\|t^{l-\alpha}\left\|\frac{\partial^{l}u(\cdot,t)}{\partial
t^{l}}\right\|_{p(\cdot),\gamma_d}\right\|_{q(\cdot),\mu},
\end{eqnarray*}
 for all $f\in L^{p(\cdot)}(\gamma_d)$.
\end{lem}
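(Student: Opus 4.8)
The plan is to mimic the proof of Lemma \ref{lematec3}, now inserting the Hardy inequality (\ref{hardyineqtainftyr}) to handle the outer norm $\|\cdot\|_{q(\cdot),\mu}$, and a dilation (scaling) estimate for the converse implication. As there, I assume without loss of generality that $k\geq l$.

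For the passage from the $k$-th quantity to the $l$-th: exactly as in Lemma \ref{lematec3}, Lemma \ref{lematec5} gives $\lim_{s\to+\infty}\partial_s^{n}u(x,s)=0$, so $\partial_t^{n}u(x,t)=-\int_t^{+\infty}\partial_s^{n+1}u(x,s)\,ds$, and Minkowski's integral inequality (\ref{integralMinkowski}) applied in the $x$ variable (with $\gamma_d$ on $\mathbb{R}^{d}$ and Lebesgue measure on $(t,+\infty)$) gives
$$\left\|\frac{\partial^{n}u(\cdot,t)}{\partial t^{n}}\right\|_{p(\cdot),\gamma_d}\leq 4\int_t^{+\infty}\left\|\frac{\partial^{n+1}u(\cdot,s)}{\partial s^{n+1}}\right\|_{p(\cdot),\gamma_d}ds.$$
Multiplying by $t^{n-\alpha}$, taking $\|\cdot\|_{q(\cdot),\mu}$, and applying (\ref{hardyineqtainftyr}) with $r=n-\alpha>0$ (legitimate since $n\geq l>\alpha$) to $g(s)=\|\partial_s^{n+1}u(\cdot,s)\|_{p(\cdot),\gamma_d}$ yields
$$\left\|t^{n-\alpha}\left\|\frac{\partial^{n}u(\cdot,t)}{\partial t^{n}}\right\|_{p(\cdot),\gamma_d}\right\|_{q(\cdot),\mu}\leq 4\,C_{n-\alpha,q(\cdot)}\left\|t^{(n+1)-\alpha}\left\|\frac{\partial^{n+1}u(\cdot,t)}{\partial t^{n+1}}\right\|_{p(\cdot),\gamma_d}\right\|_{q(\cdot),\mu}.$$
Iterating from $n=l$ to $n=k-1$ gives the left-hand inequality of the statement, with $D_{k,l,\alpha,q(\cdot)}=\bigl(\prod_{n=l}^{k-1}4C_{n-\alpha,q(\cdot)}\bigr)^{-1}$; in particular finiteness of the $k$-th quantity implies that of the $l$-th.

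For the converse I would reuse the representation (\ref{PoissonkDev}) coming from $u(x,t_1+t_2)=P_{t_1}(P_{t_2}f)(x)$ and estimate, just as in Lemma \ref{lematec3}, using Corollary \ref{corol1}, Minkowski's inequality (\ref{integralMinkowski}) and the uniform $L^{p(\cdot)}(\gamma_d)$-boundedness of $T_s$,
$$\left\|\frac{\partial^{k}u(\cdot,t)}{\partial t^{k}}\right\|_{p(\cdot),\gamma_d}\leq 4\,C_{p(\cdot)}\,C_{k-l}\,t_1^{-(k-l)}\left\|\frac{\partial^{l}u(\cdot,t_2)}{\partial t_2^{l}}\right\|_{p(\cdot),\gamma_d}.$$
Putting $t_1=t_2=t/2$ and multiplying by $t^{k-\alpha}$, the powers of $t$ collapse and one gets $t^{k-\alpha}\|\partial_t^{k}u(\cdot,t)\|_{p(\cdot),\gamma_d}\leq C\,\phi(t/2)$, where $\phi(t):=t^{l-\alpha}\|\partial_t^{l}u(\cdot,t)\|_{p(\cdot),\gamma_d}$ and $C=C(k,l,\alpha,p(\cdot))$. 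Taking $\|\cdot\|_{q(\cdot),\mu}$, the right-hand inequality of the statement reduces to the scaling estimate $\|\phi(\cdot/2)\|_{q(\cdot),\mu}\leq C'\|\phi\|_{q(\cdot),\mu}$.

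I expect this scaling estimate to be the main obstacle. Via the substitution $t\mapsto 2t$ together with (\ref{normaq,dt/t}) one has $\|\phi(\cdot/2)\|_{q(\cdot),\mu}=\|\phi\|_{q(2\cdot),\mu}$, so what is really needed is that $q(2\cdot)$ is an equivalent exponent to $q(\cdot)$ for the measure $\mu=dt/t$; this is where the hypothesis $q(\cdot)\in\mathcal{P}_{0,\infty}$ is used in full. Indeed $q(2\cdot)$ again belongs to $\mathcal{P}_{0,\infty}$ with the same limiting values at $0$ and $\infty$, and the decay conditions in its definition give $|q(2t)-q(t)|\lesssim 1/\ln(1/t)$ near $0$ and $|q(2t)-q(t)|\lesssim 1/\ln t$ near $\infty$; on the multiplicative scale attached to $\mu$ this is logarithmic-Hölder-type decay, and the standard modular-comparison argument (as in \cite{dieningsamko}, \cite{st1}) then yields $\|h\|_{q(2\cdot),\mu}\approx\|h\|_{q(\cdot),\mu}$ for every $\mu$-measurable $h$. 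This gives the right-hand inequality (with a constant of the announced form $A_{k,l,\alpha,p(\cdot)}$, up to the scaling constant, which also depends on $q(\cdot)$) and the remaining finiteness implication. The two analytic ingredients, the Hardy inequality and the semigroup representation (\ref{PoissonkDev}), are used essentially verbatim from Lemma \ref{lematec3} (the $L^{p(\cdot)}(\gamma_d)$-mapping properties of $T_s$ and of the $t$-derivatives of $P_t$ being the only place where the conditions $\mathcal{P}_{\gamma_{d}}^{\infty}(\mathbb{R}^{d})\cap LH_{0}(\mathbb{R}^{d})$ on $p(\cdot)$ enter); passing the dilation $t\mapsto t/2$ through the variable outer norm $\|\cdot\|_{q(\cdot),\mu}$ is the one genuinely variable-exponent phenomenon, trivial in the constant-exponent setting of \cite{Pinurb} but here requiring the global regularity of $q(\cdot)$.
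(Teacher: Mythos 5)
The first half of your argument (bounding the lower-order quantity by the higher-order one via Minkowski's inequality and the Hardy inequality (\ref{hardyineqtainftyr}), iterated $k-l$ times) is exactly the paper's proof and is correct. The problem is the converse direction. You are right to isolate, after the splitting $t_1=t_2=t/2$, the scaling estimate $\|\phi(\cdot/2)\|_{q(\cdot),\mu}=\|\phi\|_{q(2\cdot),\mu}\lesssim\|\phi\|_{q(\cdot),\mu}$ as the genuinely variable-exponent point (the paper itself performs the substitution $s=t/2$ as if it were an isometry of $\|\cdot\|_{q(\cdot),\mu}$, which it is only for constant $q$), but your proposed justification is false: the class $\mathcal{P}_{0,\infty}$ constrains $q$ only near $0$ and near $\infty$, and imposes no regularity on a middle region. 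For instance $q=1$ on $(0,2]$, $q=3$ on $(2,\infty)$ belongs to $\mathcal{P}_{0,\infty}$; testing with $h_\delta=\delta^{-1}\chi_{(1,1+\delta]}$ gives $\|h_\delta\|_{q(\cdot),\mu}\approx 1$ while $\|h_\delta\|_{q(2\cdot),\mu}\approx\delta^{-2/3}\to\infty$, so $\|h\|_{q(2\cdot),\mu}\approx\|h\|_{q(\cdot),\mu}$ does not hold for all measurable $h$. Worse, a two-sided equivalence valid for every $h$ forces (by testing on functions concentrated near a point of continuity) $q(2t)=q(t)$ a.e., hence, iterating and using the limit at infinity, $q$ constant; so no additional log-H\"older regularity can rescue the claimed equivalence for genuinely variable $q$. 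The decay of $|q(2t)-q(t)|$ near $0$ and $\infty$ that you invoke says nothing about the intermediate scales where the failure occurs.

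The gap can be closed without any dilation, staying inside the paper's toolkit. From the splitting estimate of Lemma \ref{lematec3}, for every $s\in(0,t/2]$ (take $t_2=s$, $t_1=t-s\geq t/2$) one has
$\left\|\frac{\partial^{k}u(\cdot,t)}{\partial t^{k}}\right\|_{p(\cdot),\gamma_d}\leq C\, t^{-(k-l)}\left\|\frac{\partial^{l}P_{s}f}{\partial s^{l}}\right\|_{p(\cdot),\gamma_d}$,
with $C=4C_{p(\cdot)}C_{k-l}2^{k-l}$. Multiplying by $s^{l-\alpha}$ and integrating in $ds$ over $(0,t/2]$ gives
$t^{k-\alpha}\left\|\frac{\partial^{k}u(\cdot,t)}{\partial t^{k}}\right\|_{p(\cdot),\gamma_d}\leq C'\, t^{-1}\int_{0}^{t}s^{l-\alpha}\left\|\frac{\partial^{l}P_{s}f}{\partial s^{l}}\right\|_{p(\cdot),\gamma_d}ds$,
and now the other Hardy inequality (\ref{hardyineq0atr}) with $r=1$ and $g(y)=y^{l-\alpha}\left\|\frac{\partial^{l}P_{y}f}{\partial y^{l}}\right\|_{p(\cdot),\gamma_d}$ yields exactly the right-hand inequality of the statement (with a constant that then also depends on $q(\cdot)$, not only on $p(\cdot)$). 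As written, however, your converse direction rests on a false norm-equivalence and is a genuine gap.
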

\begin{proof} Suppose without loss of generality that $k\geq l$. We prove first
the converse implication; by proceeding as in lemma
\ref{lematec3}, taking $t_{1}=t_{2}=\frac{t}{2}$, we have
\begin{eqnarray*}
\left\|\frac{\partial^{k}u(\cdot,t)}{\partial
t^{k}}\right\|_{p(\cdot),\gamma_d}&\leq&4C_{p(\cdot)}\left\|\frac{\partial^{l}P_{t_{2}}f}{\partial
t_{2}^{l}}\right\|_{p(\cdot),\gamma_d} C_{k-l}t_{1}^{l-k}, \,\\
&=&4C_{p(\cdot)}\left\|\frac{\partial^{l}P_{\frac{t}{2}}f}{\partial
(\frac{t}{2})^{l}}\right\|_{p(\cdot),\gamma_d}C_{k-l}(\frac{t}{2})^{l-k}.
\end{eqnarray*}
Thus
\begin{eqnarray*}
\left\|t^{k-\alpha}\left\|\frac{\partial^{k}u(\cdot,t)}{\partial
t^{k}}\right\|_{p(\cdot),\gamma_d}\right\|_{q(\cdot),\mu}&\leq&4C_{p(\cdot)}\frac{C_{k-l}}{2^{l-k}}
\left\|t^{l-\alpha}\left\|\frac{\partial^{l}u(\cdot,\frac{t}{2})}{\partial
(\frac{t}{2})^{l}}\right\|_{p(\cdot),\gamma_d}\right\|_{q(\cdot),\mu}\\
&=&A_{k,l,\alpha,p(\cdot)}\left\|s^{l-\alpha}\left\|\frac{\partial^{l}u(\cdot,s)}{\partial
s^{l}}\right\|_{p(\cdot),\gamma_d}\right\|_{q(\cdot),\mu}.
\end{eqnarray*}
with $\displaystyle A_{k,l,\alpha,p(\cdot)}=4C_{p(\cdot)}C_{k-l} 2^{k-\alpha}$.\\

For the direct implication, given $n\in \mathbb{N}$, $ n>\alpha$,
again, as in the above lemma
$$\left\|\frac{\partial^{n}u(\cdot,t)}{\partial
t^{n}}\right\|_{p(\cdot),\gamma_d}\leq 4\int_{t}^{+\infty}\left\|\frac{\partial^{n+1}u(\cdot,s)}{\partial
s^{n+1}}\right\|_{p(\cdot),\gamma_d}ds.$$

Therefore, by the Hardy's inequality (\ref{hardyineqtainftyr})
\begin{eqnarray*}
\left\|t^{n-\alpha}\left\|\frac{\partial^{n}u(\cdot,t)}{\partial
t^{n}}\right\|_{p(\cdot),\gamma_d}\right\|_{q(\cdot),\mu}
&\leq&4\left\|t^{n-\alpha}\int_{t}^{+\infty}\left\|\frac{\partial^{n+1}u(\cdot,s)}{\partial
s^{n+1}}\right\|_{p(\cdot),\gamma_d}\right\|_{q(\cdot),\mu}\\
&\leq&4C_{n,\alpha,q(\cdot)}\left\|s^{n+1-\alpha}\left\|\frac{\partial^{n+1}u(\cdot,s)}{\partial
s^{n+1}}\right\|_{p(\cdot),\gamma_d}\right\|_{q(\cdot),\mu}
\end{eqnarray*}
Now, since $n>\alpha$ is arbitrary, by using the previous result
$k-l$ times, we obtain
\begin{eqnarray*}
\left\|t^{l-\alpha}\left\|\frac{\partial^{l}u(\cdot,t)}{\partial
t^{l}}\right\|_{p(\cdot),\gamma_d}\right\|_{q(\cdot),\mu} &\leq&4C_{l,\alpha,q(\cdot)}\left\|t^{l+1-\alpha}\left\|\frac{\partial^{l+1}u(\cdot,t)}{\partial
t^{l+1}}\right\|_{p(\cdot),\gamma_d}\right\|_{q(\cdot),\mu}\\
&\leq&4^{2}C_{l,\alpha,q(\cdot)}C_{l+1,\alpha,q(\cdot)}\left\|t^{l+2-\alpha}\left\|\frac{\partial^{l+2}u(\cdot,t)}{\partial t^{l+2}}\right\|_{p(\cdot),\gamma_d}\right\|_{q(\cdot),\mu}\\
&\vdots&\\
&\leq&D_{k,l,\alpha,q(\cdot)}\left\|t^{k-\alpha}\left\|
\frac{\partial^{k}u(\cdot,t)}{\partial
t^{k}}\right\|_{p(\cdot),\gamma_d}\right\|_{q(\cdot),\mu}
\end{eqnarray*}
where $\displaystyle
D_{k,l,\alpha,q(\cdot)}=4^{k-l}C_{l,\alpha,q(\cdot)}\cdots C_{k-1,\alpha,q(\cdot)}$.
\end{proof}

The next technical result  will be the key to define the variable Gaussian Triebel-Lizorkin spaces.

\begin{lem}\label{TLind}
Let $p(\cdot)\in\mathcal{P}_{\gamma_{d}}^{\infty}(\mathbb{R}^{d})\cap LH_{0}(\mathbb{R}^{d})$ and $q(\cdot)\in\mathcal{P}_{0,\infty}$.
Let $\alpha \geq 0$ and $k,l$ integers greater than $\alpha$. Then
$$
\left\|\left\|t^{k-\alpha}\left|\frac{\partial^{k}}{\partial
t^{k}}P_{t}f\right|\right\|_{q(\cdot),\mu}\right\|_{p(\cdot),\gamma_d}<\infty $$
if and only if
$$\left\|\left\|t^{l-\alpha}\left|\frac{\partial^{l}}{\partial
t^{l}}P_{t}f\right|\right\|_{q(\cdot),\mu}\right\|_{p(\cdot),\gamma_d}<\infty.
$$
Moreover, there exist constants $A_{k,l,\alpha,p(\cdot)}, D_{k,l,\alpha,q(\cdot)}$ such that
\begin{eqnarray*}
D_{k,l,\alpha,q(\cdot)} \left\|\left\|t^{l-\alpha}\left|\frac{\partial^{l}}{\partial
t^{l}}P_{t}f\right|\right\|_{q(\cdot),\mu}\right\|_{p(\cdot),\gamma_d}&\leq&\left\|\left\|t^{k-\alpha}\left|
\frac{\partial^{k}}{\partial
t^{k}}P_{t}f\right|\right\|_{q(\cdot),\mu}\right\|_{p(\cdot),\gamma_d} \\
&\leq& A_{k,l,\alpha,p(\cdot)}  \left\|\left\|t^{l-\alpha}\left|\frac{\partial^{l}}{\partial
t^{l}}P_{t}f\right|\right\|_{q(\cdot),\mu}\right\|_{p(\cdot),\gamma_d},
\end{eqnarray*}
for all $f\in L^{p(\cdot)}(\gamma_d)$.
\end{lem}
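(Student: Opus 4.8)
The plan is to mimic the structure of Lemmas~\ref{lematec3} and \ref{lematec4}, with the two norms interchanged, reducing the statement to two one--sided estimates. First I would assume, without loss of generality, $k\ge l$, write $\|\cdot\|_{m}$ for the mixed norm built from the $m$--th $t$--derivative, and prove separately the lower bound $D_{k,l,\alpha,q(\cdot)}\|\cdot\|_{l}\le\|\cdot\|_{k}$ and the upper bound $\|\cdot\|_{k}\le A_{k,l,\alpha,p(\cdot)}\|\cdot\|_{l}$; the equivalence of the two finiteness statements then follows. The lower bound is a transcription of the direct implication of Lemma~\ref{lematec4}, carried out fibrewise in $x$ and closed off with Hardy's inequality; the upper bound is where the real work lies and rests on the subordination identity together with a maximal function estimate.

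For the lower bound, fix $x\in\mathbb R^{d}$. By Lemma~\ref{lematec5} we have $\lim_{s\to\infty}\partial_{s}^{n}u(x,s)=0$ for a.e.\ $x$ and every $n\in\mathbb N$, hence $\partial_{t}^{n}u(x,t)=-\int_{t}^{\infty}\partial_{s}^{n+1}u(x,s)\,ds$ and, for $n>\alpha$,
\[
t^{\,n-\alpha}\bigl|\partial_{t}^{n}u(x,t)\bigr|\le t^{\,n-\alpha}\int_{t}^{\infty}\bigl|\partial_{s}^{n+1}u(x,s)\bigr|\,ds .
\]
Taking the $\|\cdot\|_{q(\cdot),\mu}$ norm in $t$ and applying Hardy's inequality \eqref{hardyineqtainftyr} with $r=n-\alpha$ gives $\bigl\|t^{\,n-\alpha}|\partial_{t}^{n}u(x,t)|\bigr\|_{q(\cdot),\mu}\le C_{n-\alpha,q(\cdot)}\bigl\|t^{\,n+1-\alpha}|\partial_{t}^{n+1}u(x,t)|\bigr\|_{q(\cdot),\mu}$, a pointwise inequality between nonnegative functions of $x$; applying the monotone outer norm $\|\cdot\|_{p(\cdot),\gamma_{d}}$ and iterating over $n=l,\dots,k-1$ produces the lower bound with $D_{k,l,\alpha,q(\cdot)}=\bigl(\prod_{n=l}^{k-1}C_{n-\alpha,q(\cdot)}\bigr)^{-1}$. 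Nothing here goes beyond what is used in Lemma~\ref{lematec4}.

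For the upper bound I would use the subordination identity \eqref{PoissonkDev} with $t_{1}=t_{2}=t/2$. Setting $v(y,\tau):=\partial_{\tau}^{l}P_{\tau}f(y)$, so that $\partial_{t}^{k}u(x,t)=\partial_{t_{1}}^{k-l}P_{t_{1}}\bigl(v(\cdot,t/2)\bigr)(x)\big|_{t_{1}=t/2}$, Lemma~\ref{lematec5} applied to the fixed function $v(\cdot,t/2)$ at semigroup time $t/2$ yields the pointwise bound
\[
\bigl|\partial_{t}^{k}u(x,t)\bigr|\le C_{k-l}\,(t/2)^{-(k-l)}\,T^{\ast}\!\bigl(v(\cdot,t/2)\bigr)(x),
\]
hence $t^{\,k-\alpha}|\partial_{t}^{k}u(x,t)|\le C_{k-l}\,2^{\,k-l}\,T^{\ast}\!\bigl(t^{\,l-\alpha}v(\cdot,t/2)\bigr)(x)$, the nonnegative scalar $t^{\,l-\alpha}$ passing through $T^{\ast}$. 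I would then take the $\|\cdot\|_{q(\cdot),\mu}$ norm in $t$, the $\|\cdot\|_{p(\cdot),\gamma_{d}}$ norm in $x$, and invoke the vector--valued (Fefferman-Stein type) boundedness of the Ornstein-Uhlenbeck maximal operator, $\bigl\|\,\|T^{\ast}G(\cdot,t)\|_{q(\cdot),\mu}\bigr\|_{p(\cdot),\gamma_{d}}\le C\bigl\|\,\|G(\cdot,t)\|_{q(\cdot),\mu}\bigr\|_{p(\cdot),\gamma_{d}}$, valid for $p(\cdot)\in\mathcal P_{\gamma_{d}}^{\infty}(\mathbb R^{d})\cap LH_{0}(\mathbb R^{d})$ and $q(\cdot)\in\mathcal P_{0,\infty}$ under the Dalmasso-Scotto conditions. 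This leaves $\bigl\|\,\|t^{\,l-\alpha}|v(\cdot,t/2)|\|_{q(\cdot),\mu}\bigr\|_{p(\cdot),\gamma_{d}}$, and a change of variable $s=t/2$ in the inner norm — absorbing the resulting constant and the harmless replacement of $q(\cdot)$ by $q(2\,\cdot)$, exactly as in the proof of Lemma~\ref{lematec4} — identifies it with a multiple of the $l$--version, giving the upper bound.

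I expect the decisive step to be the vector--valued maximal inequality for $T^{\ast}$ used above. The choice $t_{1}=t_{2}=t/2$, which is precisely what makes the powers of $t$ match, forces the subordination variable $s$ to enter at scale $t^{2}$, so the crude uniform estimate $\sup_{s}\|T_{s}\cdot\|_{p(\cdot),\gamma_{d}}\lesssim\|\cdot\|_{p(\cdot),\gamma_{d}}$ that was enough in Lemmas~\ref{lematec3}--\ref{lematec4} no longer decouples $T_{s}$ from the $L^{q(\cdot)}(\mu)$ norm in $t$; one genuinely needs the maximal function and its boundedness on the mixed variable--exponent space (and, as a consequence, the upper constant in fact depends on $q(\cdot)$ too). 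The only other nuisance, pushing the dilation $t\mapsto t/2$ through the variable exponent $q(\cdot)$ on $(\mathbb R^{+},\mu)$, is the same point already invoked tacitly in the proof of Lemma~\ref{lematec4}.
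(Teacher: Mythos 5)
Your lower bound coincides with the paper's argument: the pointwise-in-$x$ representation $\partial_{t}^{n}P_{t}f(x)=-\int_{t}^{\infty}\partial_{s}^{n+1}P_{s}f(x)\,ds$, Hardy's inequality (\ref{hardyineqtainftyr}) applied fibrewise in $t$, iteration from $l$ to $k$, and monotonicity of the outer $\|\cdot\|_{p(\cdot),\gamma_d}$-norm. No objection there.

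The gap is in the upper bound, precisely at the step you yourself call decisive. The vector-valued (Fefferman--Stein type) inequality $\bigl\|\,\|T^{\ast}G(\cdot,t)\|_{q(\cdot),\mu}\bigr\|_{p(\cdot),\gamma_{d}}\leq C\bigl\|\,\|G(\cdot,t)\|_{q(\cdot),\mu}\bigr\|_{p(\cdot),\gamma_{d}}$ is not available: the only boundedness input for $T^{\ast}$ in this setting, \cite{MorPinUrb}, is the scalar bound on $L^{p(\cdot)}(\gamma_d)$, and a lattice-valued maximal inequality with inner space $L^{q(\cdot)}(\mathbb{R}^{+},\mu)$ for the Ornstein--Uhlenbeck maximal operator on Gaussian variable Lebesgue spaces is a genuinely nontrivial claim that you would have to prove (vector-valued maximal inequalities are delicate in the variable-exponent world even for the classical Hardy--Littlewood operator); asserting it "under the Dalmasso--Scotto conditions" is not a proof. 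Moreover, your diagnosis that the coupling of $T_{s}$ with the $L^{q(\cdot)}(\mu)$-norm in $t$ cannot be decoupled is incorrect: the paper decouples it. It treats the case $k=l+1$ (then induction), uses $\partial_{t_{1}}\mu^{(1/2)}_{t_{1}}(ds)=\bigl(t_{1}^{-1}-\tfrac{t_{1}}{2s}\bigr)\mu^{(1/2)}_{t_{1}}(ds)$, and applies Minkowski's integral inequality (\ref{integralMinkowski}) twice --- once for the $ds$-integral and once through the positive integral kernel of $T_{s}$ --- so that the $\|\cdot\|_{q(\cdot),\mu}$-norm in $t$ ends up inside, and $T^{\ast}$ acts on the single function $x\mapsto\bigl\|s^{l-\alpha}|\partial_{s}^{l}P_{s}f(x)|\bigr\|_{q(\cdot),\mu}$; then the scalar $L^{p(\cdot)}(\gamma_d)$-boundedness of $T^{\ast}$ (together with Lemma \ref{lematec2} for the $1/s$ term) closes the estimate. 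So your scheme does not yield the lemma as written unless you supply a proof of the vector-valued inequality; replacing that step by the paper's Minkowski decoupling repairs the argument. (The dilation $t\mapsto t/2$ versus the variable exponent $q(\cdot)$ that you flag is indeed also glossed over in the paper's own Lemma \ref{lematec4}, so I do not count that against you.)
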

\begin{proof}
Suppose without loss of generality that $k\geq l$. Let $n\in \mathbb{N}$ such that $n>\alpha$, we can prove that
$$\left|\frac{\partial^{n}}{\partial
t^{n}}P_{t}f(x)\right|\leq\int_{t}^{+\infty}\left|\frac{\partial^{n+1}}{\partial
s^{n+1}}P_{s}f(x)\right|ds \quad.$$

Then, by the Hardy's inequality (\ref{hardyineqtainftyr}),
\begin{eqnarray*}
\left\|t^{n-\alpha}\left|\frac{\partial^{n}}{\partial
t^{n}}P_{t}f(x)\right|\right\|_{q(\cdot),\mu}
&\leq&\left\|t^{n-\alpha}\int_{t}^{+\infty}\left|\frac{\partial^{n+1}}{\partial
s^{n+1}}P_{s}f(x)\right|ds\right\|_{q(\cdot),\mu}\\
&\leq&C_{n,\alpha,q(\cdot)}\left\|s^{n+1-\alpha}\left|\frac{\partial^{n+1}}{\partial
s^{n+1}}P_{s}f(x)\right|\right\|_{q(\cdot),\mu}
\end{eqnarray*}
Now, since $n>\alpha$ is arbitrary, by iterating the previous argument $k-l$ times, we obtain
\begin{eqnarray*}
\left\|t^{l-\alpha}\left|\frac{\partial^{l}}{\partial
t^{l}}P_{t}f(x)\right|\right\|_{q(\cdot),\mu}
&\leq&C_{l,\alpha,q(\cdot)}\left\|t^{l+1-\alpha}\left|\frac{\partial^{l+1}}{\partial
t^{l+1}}P_{t}f(x)\right|\right\|_{q(\cdot),\mu}\\
&\leq&C_{l,\alpha,q(\cdot)}C_{l+1,\alpha,q(\cdot)}\left\|t^{l+2-\alpha}\left|\frac{\partial^{l+2}}{\partial
t^{l+2}}P_{t}f(x)\right|\right\|_{q(\cdot),\mu}\\
&\vdots&\\
&\leq&C_{k,l,\alpha,q(\cdot)}\left\|t^{k-\alpha}\left|
\frac{\partial^{k}}{\partial
t^{k}}P_{t}f(x)\right|\right\|_{q(\cdot),\mu}
\end{eqnarray*}
where $\displaystyle C_{k,l,\alpha,q(\cdot)}=C_{l,\alpha,q(\cdot)} C_{l+1,\alpha,q(\cdot)}\cdots C_{k-1,\alpha,q(\cdot)}.$ Thus,
\begin{eqnarray*}
D_{k,l,\alpha,q(\cdot)}\left\|\left\|t^{l-\alpha}\left|\frac{\partial^{l}}{\partial
t^{l}}P_{t}f\right|\right\|_{q(\cdot),\mu}\right\|_{p(\cdot),\gamma_d}&\leq&\left\|\left\|t^{k-\alpha}\left|
\frac{\partial^{k}}{\partial
t^{k}}P_{t}f\right|\right\|_{q(\cdot),\mu}\right\|_{p(\cdot),\gamma_d},
\end{eqnarray*}
where $D_{k,l,\alpha,q(\cdot)} = 1/C_{k,l,\alpha,q(\cdot)}.$\\

The other inequality is obtain from the case $k=l+1$ by an inductive argument. Let $t_{1},t_{2}>0$ and take $t=t_{1}+t_{2}$, from (\ref{PoissonkDev}) we get
$$\frac{\partial^{k}u(x,t)}{\partial
t^{k}}=\int_{0}^{+\infty}T_{s}
\left(\frac{\partial^{l}P_{t_{2}}f(x)}{\partial
t_{2}^{l}}\right)\frac{\partial^{k-l}}{\partial
t_{1}^{k-l}} \mu_{t_{1}}^{(1/2)}(ds),$$
and since, $\displaystyle\frac{\partial }{\partial
t_{1}}\mu_{t_{1}}^{(1/2)} (ds)=\big(t_{1}^{-1}-\frac{t_{1}}{2s}\big)\mu_{t_{1}}^{(1/2)}(ds)$
we obtain
\begin{eqnarray*}
\left|\displaystyle\frac{\partial^{k}u(x,t)}{\partial t^{k}}\right|&\leq&\int_{0}^{+\infty}T_{s}
\left(\left|\frac{\partial^{l}P_{t_{2}}f(x)}{\partial
t_{2}^{l}}\right|\right) |\big(t_{1}^{-1}-\frac{t_{1}}{2s}\big)|\mu_{t_{1}}^{(1/2)}(ds)\\
&\leq&t_{1}^{-1}\int_{0}^{+\infty}T_{s}
\left(\left|\frac{\partial^{l}P_{t_{2}}f(x)}{\partial
t_{2}^{l}}\right|\right)\mu_{t_{1}}^{(1/2)}(ds)\\
&& \hspace{3.2cm}+\frac{t_{1}}{2}\int_{0}^{+\infty}T_{s}
\left(\left|\frac{\partial^{l}P_{t_{2}}f(x)}{\partial
t_{2}^{l}}\right|\right)\frac{1}{s} \mu_{t_{1}}^{(1/2)}(ds).
\end{eqnarray*}
Therefore
\begin{eqnarray*}
\left\|t_{2}^{k-\alpha}\left|\displaystyle\frac{\partial^{k}u(x,t)}{\partial
t^{k}}\right|\right\|_{q(\cdot),\mu}&\leq&\left\|t_{2}^{k-\alpha}t_{1}^{-1} \int_{0}^{+\infty}T_{s}
\left(\left|\frac{\partial^{l}P_{t_{2}}f(x)}{\partial
t_{2}^{l}}\right|\right)\mu_{t_{1}}^{(1/2)}(ds)\right\|_{q(\cdot),\mu}\\
&&\hspace{.15cm} + \left\|t_{2}^{k-\alpha}\frac{t_{1}}{2}\int_{0}^{+\infty}T_{s}
\left(\left|\frac{\partial^{l}P_{t_{2}}f(x)}{\partial
t_{2}^{l}}\right|\right)\frac{1}{s} \mu_{t_{1}}^{(1/2)}(ds)\right\|_{q(\cdot),\mu}.\\
&=&(I) + (II).
\end{eqnarray*}
 Now, by using Minkowski's integral inequality twice (\ref{integralMinkowski}) (since $T_{s}$ is an integral transformation with positive kernel) and the fact that $\mu_{t_{1}}^{(1/2)}(ds)$ is a probability measure, we get
\begin{eqnarray*}
(I)&=&\left\|\big(t_{2}^{k-\alpha}t_{1}^{-1}\int_{0}^{+\infty}T_{s}
\left(\left|\frac{\partial^{l}P_{t_{2}}f(x)}{\partial
t_{2}^{l}}\right|\right)\mu_{t_{1}}^{(1/2)}(ds)\right\|_{q(\cdot),\mu}\\
&\leq& 4\int_{0}^{+\infty}\left\|t_{2}^{k-\alpha}t_{1}^{-1}T_{s}
\left(\left|\frac{\partial^{l}P_{t_{2}}f(x)}{\partial
t_{2}^{l}}\right|\right)\right\|_{q(\cdot),\mu}\mu_{t_{1}}^{(1/2)}(ds)\\
&\leq&16\int_{0}^{+\infty}T_{s}\left(\left\|t_{2}^{k-\alpha}t_{1}^{-1}\left|\frac{\partial^{l}P_{t_{2}}f(x)}{\partial
t_{2}^{l}}\right|\right\|_{q(\cdot),\mu}\right)\mu_{t_{1}}^{(1/2)}(ds)\\
&\leq&16 T^{\ast} \left(\left\|t_{2}^{k-\alpha}t_{1}^{-1}\left|\frac{\partial^{l}P_{t_{2}}f(x)}{\partial
t_{2}^{l}}\right|\right\|_{q(\cdot),\mu}\right).
\end{eqnarray*}

For (II) we proceed  in analogous way,  and by using Lemma \ref{lematec2} we get
\begin{eqnarray*}
(II)&\leq&\frac{16}{2}T^{\ast}\left(\left\|t_{2}^{k-\alpha}t_{1}\left|\frac{\partial^{l}P_{t_{2}}f(x)}{\partial
t_{2}^{l}}\right|\right\|_{q(\cdot),\mu}\right)\int_{0}^{+\infty}\frac{1}{s}\mu_{t_{1}}^{(1/2)}(ds)\\
&=&8T^{\ast}\left(\left\|t_{2}^{k-\alpha}t_{1} \left|\frac{\partial^{l}P_{t_{2}}f(x)}{\partial
t_{2}^{l}}\right|\right\|_{q(\cdot),\mu}\right) C_{1}\frac{1}{t_{1}^{2}}.
\end{eqnarray*}
Now, since $T^{\ast}$ is defined as a supremum, we get
 \begin{eqnarray*}
(II) &\leq& 8C_{1}T^{\ast}\left(\left\|t_{2}^{k-\alpha}t_{1}^{-1}\left|\frac{\partial^{l}P_{t_{2}}f(x)}{\partial
t_{2}^{l}}\right|\right\|_{q(\cdot),\mu}\right).
\end{eqnarray*}
Then, taking $t_{1}=t_{2}=\frac{t}{2}$ and the change of variable $s=\frac{t}{2}$, we have
\begin{eqnarray*}
(I)&\leq&16T^{\ast}\left(\left\|s^{l-\alpha}\left|\frac{\partial^{l}P_{s}f(x)}{\partial
s^{l}}\right|\right\|_{q(\cdot),\mu}\right)
\end{eqnarray*}
and
\begin{eqnarray*}
(II) &\leq&8C_1 T^{\ast}\left(\left\|s^{l-\alpha}\left|\frac{\partial^{l}P_{s}f(x)}{\partial s^{l}}\right|\right\|_{q(\cdot),\mu}\right)
\end{eqnarray*}
Therefore, by the $L^{p(\cdot)}(\gamma_d)$-boundedness of $T^*$ (see \cite{MorPinUrb}),
\begin{eqnarray*}
\left\|\left\|t^{k-\alpha}\left|\displaystyle\frac{\partial^{k}u(\cdot,t)}{\partial
t^{k}}\right|\right\|_{q(\cdot),\mu}\right\|_{p(\cdot),\gamma_d}&\leq&2^{k-\alpha}16\left\|T^{\ast}\left(\left\|s^{l-\alpha}\left|\frac{\partial^{l}P_{s}f}{\partial
s^{l}}\right|\right\|_{q(\cdot),\mu}\right)\right\|_{p(\cdot),\gamma_d}\\
&+&2^{k-\alpha}8C_1\left\|T^{\ast}\left(\left\|s^{l-\alpha}\left|\frac{\partial^{l}P_{s}f}{\partial
s^{l}}\right|\right\|_{q(\cdot),\mu}\right)\right\|_{p(\cdot),\gamma_d}\\
&\leq&2^{k-\alpha}C_{p(\cdot)}(16+8C_{1})\left\|\left\|s^{l-\alpha}\left|\frac{\partial^{l}P_{s}f}{\partial
s^{l}}\right|\right\|_{q(\cdot),\mu}\right\|_{p(\cdot),\gamma_d}.
\end{eqnarray*}
\end{proof}

Next, we need the following technical result for the $L^{p(\cdot)}(\gamma_d)$-norms of the derivatives of the Poisson-Hermite semigroup:

\begin{lem}\label{kdecay}
Let $p(\cdot)\in\mathcal{P}_{\gamma_{d}}^{\infty}(\mathbb{R}^{d})\cap LH_{0}(\mathbb{R}^{d})$. Suppose that $f\in L^{p(\cdot)}(\gamma_d)$, then for any integer $k$,
$\displaystyle\left\|\frac{\partial^{k}}{\partial
t^{k}}P_t f\right\|_{p(\cdot),\gamma_{d}}\leq C_{p(\cdot)}\left\|\frac{\partial^{k}}{\partial
s^{k}}P_s f\right\|_{p(\cdot),\gamma_{d}}$,
for whatever $0<s<t<+\infty$. Moreover,
\begin{equation}\label{kdecayine}
\left\|\frac{\partial^{k}}{\partial t^{k}}P_t f\right\|_{p(\cdot),\gamma_d}\leq
\frac{C_{k,p(\cdot)}}{t^{k}} \|f\|_{p(\cdot),\gamma_d}, \quad t>0 \quad .
\end{equation}
\end{lem}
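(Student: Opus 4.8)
The plan is to prove both assertions by exploiting the semigroup property and the known bounds for the derivatives of the Poisson–Hermite semigroup together with the $L^{p(\cdot)}(\gamma_d)$-boundedness of the Ornstein–Uhlenbeck maximal function $T^{\ast}$. For the first (monotonicity-type) inequality, fix $0<s<t$ and write $t = s + (t-s)$, so that by the semigroup property $P_t f = P_{t-s}(P_s f)$ and hence, differentiating $k$ times (all derivatives falling on the $P_s$-factor is not what we want; instead we keep them on the outer $P_{t-s}$ as in~(\ref{02})),
\[
\frac{\partial^{k}}{\partial t^{k}} P_t f(x) = \int_{0}^{+\infty} T_r\!\left(P_s f\right)(x)\,\frac{\partial^{k}}{\partial t^{k}}\mu_{t-s}^{(1/2)}(dr).
\]
Wait — the cleaner route is: differentiate in the $P_s$ variable. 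Writing $u(x,t) = P_t f(x)$ and using $u(x, s + h) = P_h(P_s f)(x) = \int_0^\infty T_r(P_s f)(x)\,\mu_h^{(1/2)}(dr)$, differentiate $k$ times with respect to $s$ treating $h$ fixed, then set $h \to 0^+$; since $\mu_h^{(1/2)} \to \delta_0$ weakly this gives $\frac{\partial^k}{\partial t^k}P_t f = P_{t-s}\big(\frac{\partial^k}{\partial s^k}P_s f\big)$ for $0<s<t$. Then the $L^{p(\cdot)}(\gamma_d)$-boundedness of $P_{t-s}$ — which follows from the $L^{p(\cdot)}(\gamma_d)$-boundedness of $T^{\ast}$ (each $T_r$ is bounded on $L^{p(\cdot)}(\gamma_d)$ with a uniform constant, see \cite{MorPinUrb}) and the fact that $\mu_{t-s}^{(1/2)}$ is a probability measure, via Minkowski's integral inequality~(\ref{integralMinkowski}) — yields
\[
\left\|\frac{\partial^{k}}{\partial t^{k}}P_t f\right\|_{p(\cdot),\gamma_d} \le C_{p(\cdot)} \left\|\frac{\partial^{k}}{\partial s^{k}}P_s f\right\|_{p(\cdot),\gamma_d}.
\]

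For the decay estimate~(\ref{kdecayine}), I would combine the monotonicity just established with a bound at a convenient time. Splitting $t = t/2 + t/2$ and writing $u(\cdot,t) = P_{t/2}(P_{t/2} f)$, differentiate $k$ times using the representation~(\ref{02}) so that all $k$ derivatives act on $\mu_{t/2}^{(1/2)}$:
\[
\frac{\partial^{k}}{\partial t^{k}}P_t f(x) = \int_{0}^{+\infty} T_r\!\left(P_{t/2} f\right)(x)\,\frac{\partial^{k}}{\partial (t/2)^{k}}\mu_{t/2}^{(1/2)}(dr).
\]
Taking $L^{p(\cdot)}(\gamma_d)$-norms, applying Minkowski's integral inequality~(\ref{integralMinkowski}), the uniform $L^{p(\cdot)}(\gamma_d)$-boundedness of $T_r$, the $L^{p(\cdot)}$-contraction-type bound $\|P_{t/2}f\|_{p(\cdot),\gamma_d}\le C_{p(\cdot)}\|f\|_{p(\cdot),\gamma_d}$, and Corollary~\ref{corol1} to control $\int_0^\infty \big|\frac{\partial^{k}}{\partial (t/2)^{k}}\mu_{t/2}^{(1/2)}\big|(dr) \le C_k (t/2)^{-k}$, we obtain
\[
\left\|\frac{\partial^{k}}{\partial t^{k}}P_t f\right\|_{p(\cdot),\gamma_d} \le C_{p(\cdot)}\, C_k \left(\frac{t}{2}\right)^{-k} \|f\|_{p(\cdot),\gamma_d} = \frac{C_{k,p(\cdot)}}{t^{k}}\|f\|_{p(\cdot),\gamma_d}.
\]

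The main obstacle — and the one point requiring care — is justifying the limiting identity $\frac{\partial^k}{\partial t^k}P_t f = P_{t-s}\big(\frac{\partial^k}{\partial s^k}P_s f\big)$, i.e. interchanging the $k$-fold $t$-derivative with the $T_r$-integral and passing $h\to 0^+$. This is handled exactly as in Lemma~\ref{lematec3}: Lemma~\ref{lematec5} gives $\big|\frac{\partial^k}{\partial s^k}P_s f(x)\big| \le C_k\, T^{\ast}f(x)\, s^{-k}$, which is a fixed $L^{p(\cdot)}(\gamma_d)$ (hence locally integrable) majorant independent of $h$ on compact $s$-intervals, so dominated convergence applies and $\mu_h^{(1/2)}\rightharpoonup\delta_0$ as $h\to0^+$ closes the argument. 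Everything else is a routine assembly of Minkowski's inequality~(\ref{integralMinkowski}), Corollary~\ref{corol1}, and the $L^{p(\cdot)}(\gamma_d)$-boundedness results of~\cite{MorPinUrb}; note also that~(\ref{kdecayine}) alone already implies the first claim up to the factor $C_{p(\cdot)}$ whenever one is content with a constant depending on $k$, but the semigroup argument gives the sharper statement with a constant independent of $k$ in the comparison of same-order derivatives.
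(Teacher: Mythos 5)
Your proof is correct and follows essentially the paper's own route: the first inequality comes from differentiating the semigroup identity $P_tf=P_{t-s}(P_sf)$ $k$ times in the inner time variable (justified by dominated convergence with the majorant of Lemma \ref{lematec5}) and then invoking the $L^{p(\cdot)}(\gamma_d)$-boundedness of $P_{t-s}$, while (\ref{kdecayine}) follows from differentiating the subordination representation (\ref{02}) and combining Minkowski's inequality (\ref{integralMinkowski}), the $L^{p(\cdot)}(\gamma_d)$-boundedness of the Ornstein--Uhlenbeck semigroup and Corollary \ref{corol1}. Three minor blemishes, none fatal: the ``$h\to 0^{+}$, $\mu_{h}^{(1/2)}\rightharpoonup\delta_{0}$'' step is unnecessary (and as literally stated does not produce the identity) --- after differentiating $u(x,s+h)=P_h(P_sf)(x)$ in $s$ you simply substitute $h=t-s$, which is exactly the paper's argument with $t_1=t-s$, $t_2=s$; the splitting $t=\frac{t}{2}+\frac{t}{2}$ in your proof of (\ref{kdecayine}) is an inessential detour, since the paper differentiates $P_tf=\int_0^{\infty}T_sf\,\mu_t^{(1/2)}(ds)$ directly and applies Corollary \ref{corol1} at $t$ itself; and your closing parenthetical that (\ref{kdecayine}) alone already implies the first inequality is not correct, because a bound of the form $C_{k,p(\cdot)}(t-s)^{-k}\|P_sf\|_{p(\cdot),\gamma_d}$ neither stays bounded as $s\uparrow t$ nor is controlled by $\left\|\frac{\partial^{k}}{\partial s^{k}}P_sf\right\|_{p(\cdot),\gamma_d}$.
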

\begin{proof} First, let us consider the case $k=0$. Fixed $t_{1},t_{2}>0$,
by using the semigroup property of $\{P_{t}\}$, we get
$$
P_{t_{1}+t_{2}}f(x) =P_{t_{1}}(P_{t_{2}}f(x)) $$
Thus, by the $L^{p(\cdot)}$-boundedness of $\{P_{t}\}$ (see \cite{MorPinUrb}),

$$\|P_{t_{1}+t_{2}}f\|_{p(\cdot),\gamma_d}\leq C_{p(\cdot)}\|P_{t_{2}}f\|_{p(\cdot),\gamma_{d}}.$$

In order to prove the general case, $k >0$, using the dominated convergence theorem and differentiating the identity
 $u(x,t_{1}+t_{2})=P_{t_{1}}(u(x,t_{2}))$ $k$-times with respect to $t_{2}$
 we  obtain
$$ \frac{\partial^{k}u(x,t_{1}+t_{2})}{\partial (t_{1}+t_{2})^{k}}=P_{t_{1}}\left(\frac{\partial^{k}u(x,t_{2})}{\partial
t_{2}^{k}}\right)$$ and then we proceed as in the previous argument. In other to prove (\ref{kdecayine}) we use again the representation
(\ref{02}) of the Poisson-Hermite semigroup and differentiating $k$-times
with respect to $t$ to obtain
$$\frac{\partial^{k}}{\partial t^{k}}u(x,t)=\int_{0}^{+\infty}T_{s}f(x) \frac{\partial^{k}}{\partial
t^{k}}\mu_{t}^{(1/2)}(ds).$$ Thus, by the Minkowski's integral inequality, the $L^{p(\cdot)}$-boundedness of the Ornstein-Uhlenbeck semigroup (see \cite{MorPinUrb})  and the Corollary \ref{corol1} , for $t>0$
\begin{eqnarray*}
\left\|\frac{\partial^{k}u(\cdot,t)}{\partial
t^{k}}\right\|_{p(\cdot),\gamma_d}&\leq&4\int_{0}^{+\infty}\left\|T_{s}f
\frac{\partial^{k}\mu_{t}^{(1/2)}}{\partial
t^{k}}(ds)\right\|_{p(\cdot),\gamma_d}\\
&=&4\int_{0}^{+\infty}\|T_{s}f\|_{p(\cdot),\gamma_d}\left|\frac{\partial^{k}\mu_{t}^{(1/2)}}{\partial
t^{k}}(ds)\right|\\
&\leq&4C_{p(\cdot)}\|f\|_{p(\cdot),\gamma_d}\int_{0}^{+\infty}\left|\frac{\partial^{k}
\mu_{t}^{(1/2)}}{\partial t^{k}}(ds)\right| \leq
\frac{C_{k,p(\cdot)}}{t^{k}} \|f\|_{p(\cdot),\gamma_d}.
\end{eqnarray*}
\end{proof}

The Lipschitz spaces can be generalized of the following way (see, for example \cite{Pinurb}, \cite{st1},\cite{trie1},\cite{trie2}), using the Poisson-Hermite semigroup.\\

We are ready  to define the variable Gaussian Besov-Lipschitz spaces $B_{p(\cdot),q(\cdot)}^{\alpha}(\gamma_d)$, also called Gaussian  Besov-Lipschitz spaces with variable exponents
or variable Besov-Lipschitz spaces for expansions in Hermite polinomials.

\begin{defi}
Let $p(\cdot)\in\mathcal{P}_{\gamma_{d}}^{\infty}(\mathbb{R}^{d})\cap LH_{0}(\mathbb{R}^{d})$ and $q(\cdot)\in\mathcal{P}_{0,\infty}$.
Let $\alpha \geq 0$, $k$ the smallest integer greater than
$\alpha$. The variable Gaussian Besov-Lipschitz space
$B_{p(\cdot),q(\cdot)}^{\alpha}(\gamma_d)$ is defined as the set of functions $f \in
L^{p(\cdot)}(\gamma_d)$ such that
\begin{equation}\label{e15}
\left\|t^{k-\alpha} \left\|
\frac{\partial^{k}P_t f}{\partial t^{k}} \right\|_{p(\cdot),\gamma_d}
\right\|_{q(\cdot),\mu}  < \infty.
\end{equation}
The norm of $f \in B_{p(\cdot),q(\cdot)}^{\alpha}(\gamma_d)$ is defined as
\begin{equation}
\left\| f \right\|_{B_{p(\cdot),q(\cdot)}^{\alpha}}: =  \left\| f \right\|_{p(\cdot),
\gamma_d} +\left\|t^{k-\alpha} \left\|
\frac{\partial^{k}   P_t f}{\partial t^{k}} \right\|_{p(\cdot),\gamma_d}\right\|_{q(\cdot),\mu}
.
\end{equation}

 The variable Gaussian Besov-Lipschitz space
$B_{p(\cdot),\infty}^{\alpha}(\gamma_d)$ is defined as the set of functions $f \in
L^{p(\cdot)}(\gamma_d)$ for which there exists a constant $A$ such that
$$\left\|\frac{\partial^{k}P_t f}{\partial
t^{k}}\right\|_{p(\cdot),\gamma_d}\leq At^{-k+\alpha}$$ and then the norm of
$f \in B_{p(\cdot),\infty}^{\alpha}(\gamma_d)$ is defined as
\begin{equation}
\left\| f \right\|_{B_{p(\cdot),\infty}^{\alpha}}: =  \left\| f
\right\|_{p(\cdot),\gamma_d} +A_{k}(f),
\end{equation}
where $A_{k}(f)$ is the smallest constant $A$ in
the above inequality.
\end{defi}

Lemmas  \ref{lematec3} and \ref{lematec4} show that we could have
replaced  $k$ with any other integer $l$ greater than $\alpha$ and
the resulting norms are equivalents.\\

Now, let us study some inclusion relations between variable Gaussian
Besov-Lipschitz spaces. The next result is analogous to Proposition 10, page 153 in \cite{st1} (see also \cite{Pinurb} or Proposition 7.36 in \cite{urbina2019} ).

\begin{prop} \label{incluBesov}
Let $p(\cdot)\in\mathcal{P}_{\gamma_{d}}^{\infty}(\mathbb{R}^{d})\cap LH_{0}(\mathbb{R}^{d})$ and $q_{1}(\cdot),q_{2}(\cdot)\in\mathcal{P}_{0,\infty}$. The inclusion $B_{p(\cdot),q_1(\cdot)}^{\alpha_{1}}(\gamma_d)\subset
B_{p(\cdot),q_2(\cdot)}^{\alpha_{2}}(\gamma_d)$ holds if:
\begin{enumerate}
\item [i)]   $\alpha_{1}>\alpha_{2}>0$ ( $q_{1}(\cdot)$ y $q_{2}(\cdot)$ not need to be related), or
\item[ii)] If $\alpha_{1}=\alpha_{2}$ and
$q_{1}(t)\leq q_{2}(t)$ a.e.
\end{enumerate}
\end{prop}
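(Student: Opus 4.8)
The plan is to prove the two inclusions separately, in each case taking $k$ an integer larger than both $\alpha_1$ and $\alpha_2$ so that, by Lemmas \ref{lematec3} and \ref{lematec4}, the $B^{\alpha_j}_{p(\cdot),q_j(\cdot)}$-norm can be computed using this common $k$. So assume $f\in B^{\alpha_1}_{p(\cdot),q_1(\cdot)}(\gamma_d)$; then $f\in L^{p(\cdot)}(\gamma_d)$ and, writing $\varphi(t)=\bigl\|\partial_t^k P_t f\bigr\|_{p(\cdot),\gamma_d}$, we know $\bigl\|t^{k-\alpha_1}\varphi(t)\bigr\|_{q_1(\cdot),\mu}<\infty$ (or the analogous sup-bound when $q_1(\cdot)\equiv\infty$). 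We must show $\bigl\|t^{k-\alpha_2}\varphi(t)\bigr\|_{q_2(\cdot),\mu}<\infty$, and it suffices to control this quantity by $\|f\|_{B^{\alpha_1}_{p(\cdot),q_1(\cdot)}}$.

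For part \textit{ii)}, when $\alpha_1=\alpha_2=\alpha$ and $q_1(t)\le q_2(t)$ a.e., the function $g(t)=t^{k-\alpha}\varphi(t)$ is the same in both norms, so the statement reduces to the embedding $L^{q_1(\cdot)}(\mathbb{R}^+,\mu)\hookrightarrow L^{q_2(\cdot)}(\mathbb{R}^+,\mu)$ valid on a \emph{finite}-measure-like situation — except $\mu(dt)=dt/t$ is not finite. However, $g$ decays and is integrable in the relevant sense: split $\mathbb{R}^+=(0,1]\cup(1,\infty)$. On $(0,1]$ one has $\mu((0,1])=\infty$, but $g(t)\to 0$ fast enough — indeed $t^{k-\alpha}\to 0$ since $k>\alpha$, and using Lemma \ref{kdecay} together with $\|P_tf\|_{p(\cdot),\gamma_d}\le C\|f\|_{p(\cdot),\gamma_d}$ one controls $\varphi$ — so that $g$ is bounded; more carefully, on the set where $g(t)\le 1$ one has $g(t)^{q_2(t)}\le g(t)^{q_1(t)}$ pointwise, while the set where $g(t)>1$ has finite $\mu$-measure (since $\|g\|_{q_1(\cdot),\mu}<\infty$ forces $\mu(\{g>1\})<\infty$). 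On the former part the modular for $q_2$ is dominated by that for $q_1$; on the latter, finite-measure part, boundedness of $g$ (from the decay estimates of Lemmas \ref{kdecay} and \ref{lematec3}) gives a finite contribution. Hence $\rho_{q_2(\cdot),\mu}(g/\lambda)<\infty$ for suitable $\lambda$, so $g\in L^{q_2(\cdot)}(\mathbb{R}^+,\mu)$, with a norm bound in terms of $\|f\|_{B^{\alpha_1}_{p(\cdot),q_1(\cdot)}}$; adding $\|f\|_{p(\cdot),\gamma_d}$ finishes \textit{ii)}.

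For part \textit{i)}, with $\alpha_1>\alpha_2>0$, write $t^{k-\alpha_2}\varphi(t)=t^{\alpha_1-\alpha_2}\,\bigl(t^{k-\alpha_1}\varphi(t)\bigr)$. Split again at $t=1$. On $(0,1]$, since $\alpha_1-\alpha_2>0$, the factor $t^{\alpha_1-\alpha_2}\le 1$, so $\bigl\|t^{k-\alpha_2}\varphi\,\chi_{(0,1]}\bigr\|_{q_2(\cdot),\mu}$ is controlled by the same reasoning as in \textit{ii)} applied to the bounded-below-by-nothing tail; more directly, one gets $\bigl\|t^{k-\alpha_2}\varphi\,\chi_{(0,1]}\bigr\|_{q_2(\cdot),\mu}\le\bigl\|t^{k-\alpha_1}\varphi\,\chi_{(0,1]}\bigr\|_{q_2(\cdot),\mu}$ and then argue as before to pass from $q_1$ to $q_2$ on $(0,1]$. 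On $(1,\infty)$ we use the decay $\varphi(t)=\bigl\|\partial_t^k P_t f\bigr\|_{p(\cdot),\gamma_d}\le C_{k,p(\cdot)}t^{-k}\|f\|_{p(\cdot),\gamma_d}$ from \eqref{kdecayine}, so $t^{k-\alpha_2}\varphi(t)\le C t^{-\alpha_2}\|f\|_{p(\cdot),\gamma_d}$, and by Lemma \ref{desigualdadesnormamu}\,iii) the function $t^{-\alpha_2}\chi_{(1,\infty)}$ lies in $L^{q_2(\cdot)}(\mathbb{R}^+,\mu)$; hence the $(1,\infty)$ piece is finite with norm $\lesssim\|f\|_{p(\cdot),\gamma_d}$. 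Combining the two pieces and adding $\|f\|_{p(\cdot),\gamma_d}$ gives $f\in B^{\alpha_2}_{p(\cdot),q_2(\cdot)}(\gamma_d)$ with the desired norm estimate.

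The main obstacle, and the place requiring genuine care rather than routine estimation, is handling the variable-exponent Lebesgue embedding on $(0,1]$ against the \emph{infinite} measure $\mu(dt)=dt/t$: the usual finite-measure embedding $L^{q_1(\cdot)}\hookrightarrow L^{q_2(\cdot)}$ is not directly available, so one must exploit that $g(t)=t^{k-\alpha}\varphi(t)$ is not merely in $L^{q_1(\cdot)}$ but is actually \emph{bounded} near $0$ (a consequence of Lemma \ref{kdecay} and the $L^{p(\cdot)}$-boundedness of $P_t$, giving $\varphi(t)\le C\|f\|_{p(\cdot),\gamma_d}$ uniformly, hence $g(t)\le C\|f\|_{p(\cdot),\gamma_d}$ on $(0,1]$), and split the domain according to whether $g>1$ or $g\le 1$ to compare the two modulars. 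Once this comparison is set up, the rest is bookkeeping with Lemma \ref{desigualdadesnormamu} and the elementary inequality $a^{q_2}\le a^{q_1}$ for $0\le a\le 1$, $q_1\le q_2$.
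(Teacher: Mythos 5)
Your skeleton (split at $t=1$, use Lemma \ref{kdecay} on the tail, compare modulars in part ii)) is close to the paper's, but two steps fail as written. Writing $\varphi(t)=\bigl\|\partial_t^{k}P_tf\bigr\|_{p(\cdot),\gamma_d}$ and $g(t)=t^{k-\alpha}\varphi(t)$ as you do: the boundedness of $g$, which you correctly single out as the crux, is \emph{not} a consequence of Lemma \ref{kdecay} plus the $L^{p(\cdot)}$-boundedness of $P_t$. Inequality (\ref{kdecayine}) only gives $\varphi(t)\le C_{k,p(\cdot)}t^{-k}\|f\|_{p(\cdot),\gamma_d}$, hence $g(t)\le Ct^{-\alpha}\|f\|_{p(\cdot),\gamma_d}$, which blows up as $t\to0$ whenever $\alpha>0$; there is no uniform bound $\varphi(t)\le C\|f\|_{p(\cdot),\gamma_d}$ for $k\ge1$, so your claim that $g$ is bounded near $0$ is unjustified. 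The boundedness of $g$ is exactly the embedding $B^{\alpha}_{p(\cdot),q_1(\cdot)}(\gamma_d)\subset B^{\alpha}_{p(\cdot),\infty}(\gamma_d)$, which the paper proves by a localization you never perform: fix $t_0>0$, use the monotonicity part of Lemma \ref{kdecay} (namely $\varphi(t_0)\le C_{p(\cdot)}\varphi(t)$ for $t\in[t_0/2,t_0]$) to pull $\varphi(t_0)$ out of the $q_1(\cdot)$-norm over $[t_0/2,t_0]$, and then invoke the lower bound $\|\chi_{[t_0/2,t_0]}\|_{q_1(\cdot),\mu}\ge(\ln 2)^{1/q_{1}^{-}}$ of Lemma \ref{desigualdadesnormamu}, item iv). This yields $g(t)\le C_{k,\alpha,p(\cdot),q_1(\cdot)}A$ with $A=\|g\|_{q_1(\cdot),\mu}$; after that, the paper concludes ii) by the direct modular estimate $g^{q_2}=g^{\,q_2-q_1}g^{\,q_1}\le (CA)^{q_2^{+}-q_1^{-}}g^{\,q_1}$, so your $\{g\le1\}/\{g>1\}$ splitting is an admissible but unnecessary detour — the real gap is the missing proof of the pointwise bound it relies on.

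Part i) has a second genuine gap: on $(0,1]$ you throw away the factor $t^{\alpha_1-\alpha_2}$ and then propose to ``pass from $q_1$ to $q_2$'', but in i) the exponents $q_1(\cdot),q_2(\cdot)$ are not assumed comparable, so the pointwise comparison $a^{q_2}\le a^{q_1}$ (for $a\le1$) is unavailable, and boundedness alone does not put a function in $L^{q_2(\cdot)}((0,1],\mu)$ since $\mu((0,1])=\infty$. The discarded factor is what makes this piece finite: using the $B^{\alpha_1}_{p(\cdot),\infty}$ bound obtained as above, $t^{k-\alpha_2}\varphi(t)\le CA\,t^{\alpha_1-\alpha_2}$ on $(0,1]$, and $\|t^{\alpha_1-\alpha_2}\chi_{(0,1]}\|_{q_2(\cdot),\mu}<\infty$ by Lemma \ref{desigualdadesnormamu}, item ii), precisely because $\alpha_1>\alpha_2$. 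Your treatment of $(1,\infty)$ via (\ref{kdecayine}) and Lemma \ref{desigualdadesnormamu}, item iii), coincides with the paper's and is correct.
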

\begin{proof}
To prove part $ii)$, let us take $\alpha$ the common value of $\alpha_{1}$ and $\alpha_{2}$.\\ Let  $f\in B_{p(\cdot),q_1(\cdot)}^{\alpha}$ and
set
$A=\displaystyle\left\|t^{k-\alpha}\left\|\frac{\partial^{k}P_t
f}{\partial
t^{k} }\right\|_{p(\cdot),\gamma_d}\right\|_{q_{1}(\cdot),\mu}$.\\
Fixed $t_{0}>0$
$$\left\|\chi_{[\frac{t_{0}}{2},t_{0}]}t^{k-\alpha}\left\|\frac{\partial^{k}P_t f}{\partial
t^{k} }\right\|_{p(\cdot),\gamma_d}\right\|_{q_{1}(\cdot),\mu}\leq A.$$ However, by Lemma \ref{kdecay},$$\displaystyle\left\|\frac{\partial^{k}P_{t_{0}} f}{\partial t_{0}^{k}
}\right\|_{p(\cdot),\gamma_d}\leq C_{p(\cdot)}\left\|\frac{\partial^{k}P_{t} f}{\partial t^{k}
}\right\|_{p(\cdot),\gamma_d}\quad t\in [\frac{t_{0}}{2},t_{0}].$$
  Thus, we obtain
 \begin{eqnarray*}
\left\|\frac{\partial^{k}P_{t_{0}} f}{\partial
t_{0}^{k} }\right\|_{p(\cdot),\gamma_d}\left\|\chi_{[\frac{t_{0}}{2},t_{0}]}t^{k-\alpha}\right\|_{q_{1}(\cdot),\mu}&\leq& C_{p(\cdot)}\left\|\chi_{[\frac{t_{0}}{2},t_{0}]}t^{k-\alpha}\left\|\frac{\partial^{k}P_t f}{\partial
t^{k} }\right\|_{p(\cdot),\gamma_d}\right\|_{q_{1}(\cdot),\mu}\\
&\leq& C_{p(\cdot)}A.
\end{eqnarray*}
Therefore,
 \begin{eqnarray*}
(\frac{t_{0}}{2})^{k-\alpha}\left\|\frac{\partial^{k}P_{t_{0}} f}{\partial t_{0}^{k} }\right\|_{p(\cdot),\gamma_d}\left\|\chi_{[\frac{t_{0}}{2},t_{0}]}\right\|_{q_{1}(\cdot),\mu}&\leq&\left\|\frac{\partial^{k}P_{t_{0}} f}{\partial t_{0}^{k} }\right\|_{p(\cdot),\gamma_d}\left\|\chi_{[\frac{t_{0}}{2},t_{0}]}t^{k-\alpha}\right\|_{q_{1}(\cdot),\mu}\\
&\leq& C_{p(\cdot)}A
\end{eqnarray*}
and by Lemma \ref{desigualdadesnormamu}
 \begin{eqnarray*}
 (\frac{t_{0}}{2})^{k-\alpha}\left\|\frac{\partial^{k}P_{t_{0}} f}{\partial
t_{0}^{k} }\right\|_{p(\cdot),\gamma_d}(\ln 2)^{1/q_{1}^{-}}&\leq&(\frac{t_{0}}{2})^{k-\alpha}\left\|\frac{\partial^{k}P_{t_{0}} f}{\partial
t_{0}^{k} }\right\|_{p(\cdot),\gamma_d}\left\|\chi_{[\frac{t_{0}}{2},t_{0}]}\right\|_{q_{1}(\cdot),\mu}\\
&\leq& C_{p(\cdot)}A
\end{eqnarray*}
 Then,
 $$\displaystyle\left\|\frac{\partial^{k}P_{t_{0}}
f}{\partial t_{0}^{k}}\right\|_{p(\cdot),\gamma_d}\leq\frac{ C_{p(\cdot)}2^{k-\alpha}}{(\ln 2)^{1/q_{1}^{-}}}At_{0}^{-k+\alpha},$$
 and since $t_{0}$ is arbitrary
$$\displaystyle\left\|\frac{\partial^{k}P_t f}{\partial t^{k}
}\right\|_{p(\cdot),\gamma_d}\leq C_{k,\alpha,p(\cdot)q_{1}(\cdot)}At^{-k+\alpha},$$
for all $t>0$. In other words, $f\in B_{p(\cdot),q_1(\cdot)}^{\alpha}$ implies that $f\in
B_{p(\cdot),\infty}^{\alpha}$.\\
Now, let us take $g(t)=\displaystyle t^{k-\alpha}\left\|\frac{\partial^{k}P_t
f}{\partial t^{k}
}\right\|_{p(\cdot),\gamma_d}$, then $\rho_{q_{1}(\cdot),\mu}(g)<\infty$, since, $f\in B_{p(\cdot),q_1(\cdot)}^{\alpha}$. Thus, as $q_{2}(t)\geq q_{1}(t)\;\text{ a.e.}$
\begin{eqnarray*}
\rho_{q_{2}(\cdot),\mu}(g)&=&\int_{0}^{+\infty}\left(t^{k-\alpha}\left\|\frac{\partial^{k}P_t
f}{\partial t^{k}
}\right\|_{p(\cdot),\gamma_d}\right)^{q_{2}(t)}\frac{dt}{t}\\
&=&\int_{0}^{+\infty}\left(t^{k-\alpha}\left\|\frac{\partial^{k}P_t
f}{\partial t^{k}
}\right\|_{p(\cdot),\gamma_d}\right)^{q_{2}(t)-q_{1}(t)}\left(t^{k-\alpha}\left\|\frac{\partial^{k}P_t
f}{\partial
t^{k} }\right\|_{p(\cdot),\gamma_d}\right)^{q_{1}(t)}\frac{dt}{t}\\
&\leq&(C_{k,\alpha,p(\cdot)q_{1}(\cdot)}A)^{q_{2}^{+}-q_{1}^{-}}\int_{0}^{+\infty}\left(t^{k-\alpha}\left\|\frac{\partial^{k}P_t
f}{\partial
t^{k} }\right\|_{p(\cdot),\gamma_d}\right)^{q_{1}(t)}\frac{dt}{t}\\
&=&(C_{k,\alpha,p(\cdot)q_{1}(\cdot)}A)^{q_{2}^{+}-q_{1}^{-}}\rho_{q_{1}(\cdot),\mu}(g)<+\infty.
\end{eqnarray*}
Hence, $f\in B_{p(\cdot),q_2(\cdot)}^{\alpha}$. In order to prove part $i)$, by Lemma \ref{kdecay},
we obtain
$$\left\|\frac{\partial^{k}P_t f}{\partial t^{k}}\right\|_{p(\cdot),\gamma_d}\leq
C_{k,p(\cdot)}t^{-k}, \, t>0.$$ Now, given $f\in B_{p(\cdot),q_1(\cdot)}^{\alpha_{1}}$,
again by setting
$$A=\left\|t^{k-\alpha_{1}}\left\|\frac{\partial^{k}P_t f}{\partial t^{k}}\right\|_{p(\cdot),\gamma_d}\right\|_{q_{1}(\cdot),\mu},$$
we obtain, as in part $ii)$,
$$\displaystyle\left\|\frac{\partial^{k}P_t f}{\partial t^{k}
}\right\|_{p(\cdot),\gamma_d}\leq C_{k,\alpha_{1},p(\cdot)q_{1}(\cdot)}At^{-k+\alpha_{1}},$$ for all $t>0$. Therefore,
\begin{eqnarray*}
\left\|t^{k-\alpha_{2}}\left\|\frac{\partial^{k}P_t
f}{\partial t^{k}
}\right\|_{p(\cdot),\gamma_d}\right\|_{q_{2}(\cdot),\mu}&\leq&\left\|\chi_{(0,1]}t^{k-\alpha_{2}}\left\|\frac{\partial^{k}P_t
f}{\partial t^{k}
}\right\|_{p(\cdot),\gamma_d}\right\|_{q_{2}(\cdot),\mu}\\
&& \hspace{1.5cm} + \left\|\chi_{(1,\infty)}t^{k-\alpha_{2}}\left\|\frac{\partial^{k}P_t
f}{\partial
t^{k} }\right\|_{p(\cdot),\gamma_d}\right\|_{q_{2}(\cdot),\mu}\\
&=&(I)+(II).
\end{eqnarray*}
Now, again by Lemma \ref{desigualdadesnormamu} we get,
\begin{eqnarray*}
(I)&=&\left\|\chi_{(0,1]}t^{k-\alpha_{2}}\left\|\frac{\partial^{k}P_t
f}{\partial t^{k}
}\right\|_{p(\cdot),\gamma_d}\right\|_{q_{2}(\cdot),\mu}\leq \left\|\chi_{(0,1]}t^{k-\alpha_{2}}C_{k,\alpha_{1},p(\cdot)q_{1}(\cdot)}At^{-k+\alpha_{1}}\right\|_{q_{2}(\cdot),\mu}\\
&=&C_{k,\alpha_{1},p(\cdot)q_{1}(\cdot)}A \left\|\chi_{(0,1]}t^{\alpha_{1}-\alpha_{2}}\right\|_{q_{2}(\cdot),\mu}<\infty,
\end{eqnarray*}
and also by Lemma \ref{desigualdadesnormamu},
\begin{eqnarray*}
(II)&=&\left\|\chi_{(1,\infty)}t^{k-\alpha_{2}}\left\|\frac{\partial^{k}P_t
f}{\partial
t^{k} }\right\|_{p(\cdot),\gamma_d}\right\|_{q_{2}(\cdot),\mu}\leq \left\|\chi_{(1,\infty)}t^{k-\alpha_{2}}C_{k,p(\cdot)}t^{-k}\right\|_{q_{2}(\cdot),\mu}\\
&=&C_{k,p(\cdot)}\left\|\chi_{(1,\infty)}t^{-\alpha_{2}}\right\|_{q_{2}(\cdot),\mu}<\infty.
\end{eqnarray*}
Hence,
\begin{eqnarray*}
\left\|t^{k-\alpha_{2}}\left\|\frac{\partial^{k}P_t
f}{\partial t^{k}
}\right\|_{p(\cdot),\gamma_d}\right\|_{q_{2}(\cdot),\mu}&<&+\infty,
\end{eqnarray*}
and then $f\in B_{p(\cdot),q_2(\cdot)}^{\alpha_{2}}$.
\end{proof}

Let us now define the variable Gaussian
Triebel-Lizorkin spaces $F_{p(\cdot),q(\cdot)}^{\alpha}(\gamma_{d})$, which represent another way to measure regularity of functions, proceeding as in \cite{Pinurb}, \cite{trie1} or \cite{trie2}.
\begin{defi}
Let $p(\cdot)\in\mathcal{P}_{\gamma_{d}}^{\infty}(\mathbb{R}^{d})\cap LH_{0}(\mathbb{R}^{d})$ and $q(\cdot)\in\mathcal{P}_{0,\infty}$.
Let $\alpha \geq  0$ and $k$ the smallest integer greater than
$\alpha$. The variable Gaussian Triebel-Lizorkin space
$F_{p(\cdot),q(\cdot)}^{\alpha}(\gamma_{d})$ is the set of functions $f\in
L^{p(\cdot)}(\gamma_d)$ such that
\begin{equation}\label{e16}
\left\| \left\|t^{k-\alpha}
\frac{\partial^{k}P_t f}{\partial t^{k}}\right\|_{q(\cdot),\mu} \right\|_{p(\cdot),\gamma_d}<\infty,
\end{equation}
The norm of $f \in F_{p(\cdot),q(\cdot)}^{\alpha}(\gamma_d)$ is defined as
\begin{equation}
\left\| f \right\|_{F_{p(\cdot),q(\cdot)}^{\alpha}}: =  \left\| f \right\|_{p(\cdot),
\gamma_d} +\left\| \left\|t^{k-\alpha}
\frac{\partial^{k}P_t f}{\partial t^{k}}\right\|_{q(\cdot),\mu}   \right\|_{p(\cdot),\gamma_d}.
\end{equation}
\end{defi}

By Lemma \ref{TLind}, the definition of $F_{p(\cdot),q(\cdot)}^{\alpha}$ is independent of the integer $k>\alpha$ chosen and the resulting norms are equivalents.
\begin{obs} The variable Gaussian Besov-Lipschitz and   variable Gaussian Triebel-Lizorkin spaces are, by construction, subspaces of $L^{p(\cdot)}(\gamma_d)$. Moreover, since trivially $\left\| f \right\|_{p(\cdot),\gamma_d} \leq \left\| f \right\|_{B_{p(\cdot),q(\cdot)}^{\alpha}}$ and $\left\| f \right\|_{p(\cdot),\gamma_d} \leq \left\| f \right\|_{F_{p(\cdot),q(\cdot)}^{\alpha}}$, the inclusions are continuous.
On the other hand, from (\ref{PHHerm}) it is clear that for all $t>0$ and $k\in \mathbb{N}$,
$$\displaystyle\frac{\partial^{k}}{\partial t^{k}}P_{t}h_{\beta}(x)=(-1)^k |\beta|^{k/2}e^{-t\sqrt{|\beta|}}h_{\beta}(x),$$
and again by Lemma \ref{desigualdadesnormamu},
\begin{eqnarray*}
\left\|t^{k-\alpha}\left\|\frac{\partial^{k}}{\partial t^{k}}P_{t}h_{\beta}\right\|_{p(\cdot),\gamma_d}\right\|_{q(\cdot),\mu}&=&\left\|t^{k-\alpha}\left\|(-|\beta|^{1/2})^{k} e^{-t\sqrt{|\beta|}}h_{\beta}\right\|_{p(\cdot),\gamma_d}\right\|_{q(\cdot),\mu}\\
&=& |\beta|^{k/2}\|h_{\beta}\|_{p(\cdot),\gamma_d}\left\|t^{k-\alpha} e^{-t\sqrt{|\beta|}}\right\|_{q(\cdot),\mu} \\
&=&C_{k,\alpha,\beta,q(\cdot)}\|h_{\beta}\|_{p(\cdot),\gamma_d}<\infty.
\end{eqnarray*}
Thus,  $h_{\beta}\in B^{\alpha}_{p(\cdot),q(\cdot)}(\gamma_{d})$ and
$$
\|h_{\beta}\|_{B^{\alpha}_{p(\cdot),q(\cdot)}}=(1+C_{k,\alpha,\beta,q(\cdot)})\|h_{\beta}\|_{p(\cdot),\gamma_d}.$$
In a similar way, $h_{\beta}\in F^{\alpha}_{p(\cdot),q(\cdot)}(\gamma_{d})$ and
\begin{eqnarray*}
\|h_{\beta}\|_{F^{\alpha}_{p(\cdot),q(\cdot)}}&=&\|h_{\beta}\|_{p(\cdot),\gamma_d}+\left\|\left\|t^{k-\alpha}|\frac{\partial^{k}}{\partial t^{k}}P_{t}h_{\beta}|\right\|_{q(\cdot),\mu}\right\|_{p(\cdot),\gamma_d}\\
&=&\|h_{\beta}\|_{p(\cdot),\gamma_d}+ |\beta|^{k/2}\left\|t^{k-\alpha} e^{-t\sqrt{|\beta|}}\right\|_{q(\cdot),\mu}\|h_{\beta}\|_{p(\cdot),\gamma_d}\\
&=&(1+C_{k,\alpha,\beta,q(\cdot)})\|h_{\beta}\|_{p(\cdot),\gamma_d} =  \|h_{\beta}\|_{B^{\alpha}_{p(\cdot),q(\cdot)}}.
\end{eqnarray*}
Hence, the polinomials ${\mathcal P}$ is contained in $ B^{\alpha}_{p(\cdot),q(\cdot)}(\gamma_{d})$ and in $ F^{\alpha}_{p(\cdot),q(\cdot)}(\gamma_{d})$.
\end{obs}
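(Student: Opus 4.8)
The plan is to verify, one after another, the three assertions contained in the Observation: continuity of the inclusions of $B^{\alpha}_{p(\cdot),q(\cdot)}(\gamma_d)$ and $F^{\alpha}_{p(\cdot),q(\cdot)}(\gamma_d)$ into $L^{p(\cdot)}(\gamma_d)$, membership of each normalized Hermite polynomial $h_\beta$ in both spaces together with the explicit value of its norm, and finally the inclusion of the space $\mathcal{P}$ of all polynomials. The first assertion is immediate from the definitions: both $\|\cdot\|_{B^{\alpha}_{p(\cdot),q(\cdot)}}$ and $\|\cdot\|_{F^{\alpha}_{p(\cdot),q(\cdot)}}$ are of the form $\|f\|_{p(\cdot),\gamma_d}$ plus a nonnegative quantity, so being an element of either space forces $f\in L^{p(\cdot)}(\gamma_d)$ and $\|f\|_{p(\cdot),\gamma_d}\le\|f\|_{B^{\alpha}_{p(\cdot),q(\cdot)}}$ (respectively $\le\|f\|_{F^{\alpha}_{p(\cdot),q(\cdot)}}$), which is exactly continuity of the embedding.

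The heart of the matter is the computation for $h_\beta$. Starting from the eigenfunction identity \eqref{PHHerm}, $P_t h_\beta = e^{-t\sqrt{|\beta|}}h_\beta$, one differentiates $k$ times in $t$ to obtain $\partial_t^{k}P_t h_\beta(x)=(-1)^{k}|\beta|^{k/2}e^{-t\sqrt{|\beta|}}h_\beta(x)$. Now the dependence on $x$ and on $t$ separates, which makes both mixed norms factor. For the Besov quantity one gets $|\beta|^{k/2}\,\|h_\beta\|_{p(\cdot),\gamma_d}\,\|t^{k-\alpha}e^{-t\sqrt{|\beta|}}\|_{q(\cdot),\mu}$; for the Triebel--Lizorkin quantity the factor $|h_\beta(x)|$ is independent of $t$, so it leaves the inner $\|\cdot\|_{q(\cdot),\mu}$-norm before one takes $\|\cdot\|_{p(\cdot),\gamma_d}$, and one arrives at the very same product. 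If $\beta\neq 0$ then $\sqrt{|\beta|}>0$, and $k-\alpha>0$ since $k$ is the smallest integer strictly greater than $\alpha\ge 0$; because $q(\cdot)\in\mathcal{P}_{0,\infty}$ has $q_+<\infty$, part i) of Lemma \ref{desigualdadesnormamu} gives $\|t^{k-\alpha}e^{-t\sqrt{|\beta|}}\|_{q(\cdot),\mu}<\infty$, so the product equals $C_{k,\alpha,\beta,q(\cdot)}\|h_\beta\|_{p(\cdot),\gamma_d}$ with $C_{k,\alpha,\beta,q(\cdot)}:=|\beta|^{k/2}\|t^{k-\alpha}e^{-t\sqrt{|\beta|}}\|_{q(\cdot),\mu}$ finite; for $\beta=0$ the polynomial $h_0\equiv 1$ is constant and $\partial_t^{k}P_t h_0\equiv 0$ (as $k\ge 1$), so the quantity is trivially finite. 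Restoring the term $\|h_\beta\|_{p(\cdot),\gamma_d}$ then yields $\|h_\beta\|_{B^{\alpha}_{p(\cdot),q(\cdot)}}=\|h_\beta\|_{F^{\alpha}_{p(\cdot),q(\cdot)}}=(1+C_{k,\alpha,\beta,q(\cdot)})\|h_\beta\|_{p(\cdot),\gamma_d}$, which is finite because every polynomial lies in $L^{p(\cdot)}(\gamma_d)$: $\gamma_d$ is a probability measure and $p_+<\infty$ (the latter being a consequence of $p(\cdot)\in\mathcal{P}_{\gamma_{d}}^{\infty}(\mathbb{R}^{d})\cap LH_{0}(\mathbb{R}^{d})$).

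For the last assertion, an arbitrary element of $\mathcal{P}$ is a finite linear combination of the $h_\beta$, and both $B^{\alpha}_{p(\cdot),q(\cdot)}(\gamma_d)$ and $F^{\alpha}_{p(\cdot),q(\cdot)}(\gamma_d)$ are linear subspaces of $L^{p(\cdot)}(\gamma_d)$; since each $h_\beta$ belongs to both by the previous step, so does every such combination, whence $\mathcal{P}\subset B^{\alpha}_{p(\cdot),q(\cdot)}(\gamma_d)\cap F^{\alpha}_{p(\cdot),q(\cdot)}(\gamma_d)$. I do not expect a real obstacle here: the only steps requiring a little attention are the separation-of-variables observation that collapses the Triebel--Lizorkin mixed norm onto the Besov one and the degenerate case $\beta=0$; the genuine analytic input is furnished once and for all by part i) of Lemma \ref{desigualdadesnormamu}.
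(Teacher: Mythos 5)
Your proposal is correct and follows essentially the same route as the paper: trivial continuity of the embeddings from the form of the norms, the eigenfunction identity \eqref{PHHerm} to separate the $t$- and $x$-dependence so that both mixed norms factor into $|\beta|^{k/2}\|h_{\beta}\|_{p(\cdot),\gamma_d}\left\|t^{k-\alpha}e^{-t\sqrt{|\beta|}}\right\|_{q(\cdot),\mu}$, finiteness via part i) of Lemma \ref{desigualdadesnormamu}, and linearity to pass to all of $\mathcal{P}$. Your separate treatment of the degenerate case $\beta=0$ (where $\partial_t^{k}P_t h_0\equiv 0$) is a small refinement the paper glosses over, but it does not change the argument.
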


Also, we have an inclusion result for the variable Gaussian Triebel-Lizorkin spaces, which is
analogous to Proposition \ref{incluBesov},  see also \cite{Pinurb} or Proposition 7.40 in \cite{urbina2019}.

\begin{prop}\label{incluTriebel}
Let $p(\cdot)\in\mathcal{P}_{\gamma_{d}}^{\infty}(\mathbb{R}^{d})\cap LH_{0}(\mathbb{R}^{d})$ and $q_{1}(\cdot),q_{2}(\cdot)\in\mathcal{P}_{0,\infty}$.
The inclusion $F_{p(\cdot),q_1(\cdot)}^{\alpha_{1}}(\gamma_d)\subset
F_{p(\cdot),q_2(\cdot)}^{\alpha_{2}}(\gamma_d)$ holds for $\alpha_1 >
\alpha_2>0$ and $q_1(t)> q_2(t) $ a.e.
\end{prop}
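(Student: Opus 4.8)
The plan is to follow the scheme of part~(i) of Proposition~\ref{incluBesov}, but to keep the outer $L^{p(\cdot)}(\gamma_d)$-norm until the very end and to reduce everything to pointwise-in-$x$ estimates for $t\mapsto \frac{\partial^{k}}{\partial t^{k}}P_tf(x)$. First I let $k$ be the smallest integer greater than $\alpha_1$; since $\alpha_1>\alpha_2$ this $k$ is also greater than $\alpha_2$, so by Lemma~\ref{TLind} it computes both seminorms. Given $f\in F_{p(\cdot),q_1(\cdot)}^{\alpha_1}(\gamma_d)$, I set $g_x(t)=t^{k-\alpha_1}\big|\frac{\partial^{k}}{\partial t^{k}}P_tf(x)\big|$ and $H(x)=\|g_x\|_{q_1(\cdot),\mu}$, so that $\|H\|_{p(\cdot),\gamma_d}<\infty$; note also $f\in L^{p(\cdot)}(\gamma_d)\subset L^{1}(\gamma_d)$ since $\gamma_d$ is finite. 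For a.e.\ $x$ I then split $\big\|t^{k-\alpha_2}|\frac{\partial^{k}}{\partial t^{k}}P_tf(x)|\big\|_{q_2(\cdot),\mu}$, by the triangle inequality in $L^{q_2(\cdot)}(\mu)$ (valid since $q_2(\cdot)\geq 1$), into the part over $(0,1]$ and the part over $(1,\infty)$.

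On $(1,\infty)$ I use Lemma~\ref{lematec5}, $\big|\frac{\partial^{k}}{\partial t^{k}}P_tf(x)\big|\leq C_k T^{\ast}f(x)\,t^{-k}$, so this part is at most $C_k T^{\ast}f(x)\,\|\chi_{(1,\infty)}t^{-\alpha_2}\|_{q_2(\cdot),\mu}$, whose last factor is a finite constant by Lemma~\ref{desigualdadesnormamu}(iii) because $\alpha_2>0$. On $(0,1]$ I factor $t^{k-\alpha_2}=t^{\alpha_1-\alpha_2}t^{k-\alpha_1}$, so this part equals $\|\chi_{(0,1]}t^{\alpha_1-\alpha_2}g_x\|_{q_2(\cdot),\mu}$, and I invoke the key estimate
\begin{equation}\label{keyest}
\|\chi_{(0,1]}\,t^{\alpha_1-\alpha_2}g\|_{q_2(\cdot),\mu}\leq C\,\|\chi_{(0,1]}\,g\|_{q_1(\cdot),\mu},
\end{equation}
valid for every nonnegative measurable $g$, with $C$ depending only on $\alpha_1-\alpha_2$, $q_1(\cdot)$ and $q_2(\cdot)$. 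Since $\|\chi_{(0,1]}g_x\|_{q_1(\cdot),\mu}\leq H(x)$, adding the two parts gives $\big\|t^{k-\alpha_2}|\frac{\partial^{k}}{\partial t^{k}}P_tf(x)|\big\|_{q_2(\cdot),\mu}\leq C\,H(x)+C\,T^{\ast}f(x)$ for a.e.\ $x$; taking the $L^{p(\cdot)}(\gamma_d)$-norm (using its monotonicity and triangle inequality, the $L^{p(\cdot)}(\gamma_d)$-boundedness of $T^{\ast}$ from \cite{MorPinUrb}, and $f\in L^{p(\cdot)}(\gamma_d)$) shows that the $F^{\alpha_2}_{p(\cdot),q_2(\cdot)}$-seminorm of $f$ is at most $C(\|H\|_{p(\cdot),\gamma_d}+\|f\|_{p(\cdot),\gamma_d})<\infty$, so $f\in F_{p(\cdot),q_2(\cdot)}^{\alpha_2}(\gamma_d)$.

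The heart of the proof is (\ref{keyest}). The cases where the right-hand side is $0$ or $+\infty$ are trivial, so by homogeneity I may assume $\|\chi_{(0,1]}g\|_{q_1(\cdot),\mu}=1$, whence $\rho_{q_1(\cdot),\mu}(\chi_{(0,1]}g)\leq 1$ (here $q_1^{+}<\infty$ since $q_1(\cdot)\in\mathcal{P}_{0,\infty}$; cf.\ \cite{LibroDenHarjHas}). In $\rho_{q_2(\cdot),\mu}(\chi_{(0,1]}t^{\alpha_1-\alpha_2}g)=\int_0^1 g(t)^{q_2(t)}t^{(\alpha_1-\alpha_2)q_2(t)}\frac{dt}{t}$ I apply Young's inequality $ab\leq a^{r}/r+b^{r'}/r'$ pointwise in $t$ with $a=g(t)^{q_2(t)}$, $b=t^{(\alpha_1-\alpha_2)q_2(t)}$ and $r(t)=q_1(t)/q_2(t)$, which is $>1$ a.e.\ precisely because $q_1(t)>q_2(t)$ a.e.; this turns $a^{r(t)}$ into $g(t)^{q_1(t)}$ and leaves $b^{r'(t)}=t^{(\alpha_1-\alpha_2)q_2(t)r'(t)}$ with $r'(t)\geq 1$. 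Because $q_2(t)\geq q_2^{-}\geq 1$ and $\alpha_1-\alpha_2>0$, the exponent $(\alpha_1-\alpha_2)q_2(t)r'(t)-1$ stays uniformly above $-1$ on $(0,1]$, so $\int_0^1 t^{(\alpha_1-\alpha_2)q_2(t)r'(t)-1}dt$ is a finite constant; combined with $\int_0^1 g(t)^{q_1(t)}\frac{dt}{t}\leq 1$ this gives $\rho_{q_2(\cdot),\mu}(\chi_{(0,1]}t^{\alpha_1-\alpha_2}g)\leq C_0$, and rescaling by $C_0^{1/q_2^{-}}$ yields (\ref{keyest}). The main obstacle is exactly this step --- coupling Young's inequality with a variable-exponent modular while keeping the power of $t$ uniformly integrable near $0$ --- and it is what makes both hypotheses necessary: $\alpha_1>\alpha_2$ supplies the positive power $t^{\alpha_1-\alpha_2}$ on $(0,1]$, and $q_1(\cdot)>q_2(\cdot)$ a.e.\ makes the H\"older conjugate $r(\cdot)$ strictly larger than $1$.
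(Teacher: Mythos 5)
Your proof is correct, and its skeleton is the same as the paper's: split the inner $L^{q_2(\cdot)}(\mu)$-norm at $t=1$, control the $(1,\infty)$ piece by Lemma \ref{lematec5} together with Lemma \ref{desigualdadesnormamu}, and finish by the $L^{p(\cdot)}(\gamma_d)$-boundedness of $T^{\ast}$. The one place where you genuinely diverge is the $(0,1]$ piece. The paper exploits $q_1(t)>q_2(t)$ by introducing the auxiliary exponent $r(t)=\frac{q_1(t)q_2(t)}{q_1(t)-q_2(t)}$, so that $\frac{1}{r(\cdot)}+\frac{1}{q_1(\cdot)}=\frac{1}{q_2(\cdot)}$, and then applies the generalized H\"older inequality (\ref{Holder generalizada}) plus Lemma \ref{desigualdadesnormamu} $(ii)$ to absorb $\bigl\|t^{\alpha_1-\alpha_2}\chi_{(0,1]}\bigr\|_{r(\cdot),\mu}$ into the constant. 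You instead prove by hand the norm inequality $\|\chi_{(0,1]}t^{\alpha_1-\alpha_2}g\|_{q_2(\cdot),\mu}\leq C\|\chi_{(0,1]}g\|_{q_1(\cdot),\mu}$ through a modular estimate: normalize, apply Young's inequality pointwise with exponent $r(t)=q_1(t)/q_2(t)>1$, use $t^{(\alpha_1-\alpha_2)q_2(t)r'(t)}\leq t^{\alpha_1-\alpha_2}$ on $(0,1]$, and rescale by $C_0^{1/q_2^{-}}$; all of these steps check out (the unit-ball property is available since $q_1^{+}<\infty$, and $q_2^{-}\geq 1$ makes the rescaling legitimate). In substance this is an in-line proof of exactly the H\"older-type bound the paper cites — Young's inequality with exponent $q_1/q_2$ is the standard route to that embedding — so your argument is more self-contained (it avoids introducing $r(\cdot)$ and the cited lemmas, and yields an explicit constant of the form $\bigl(1+\tfrac{1}{\alpha_1-\alpha_2}\bigr)^{1/q_2^{-}}$), while the paper's is shorter by leaning on the machinery already established. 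Your observation that one may take $k$ to be the smallest integer exceeding $\alpha_1$ and invoke Lemma \ref{TLind} to compute both seminorms with the same $k$ is a point the paper leaves implicit, and is handled correctly.
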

\begin{proof} Let us consider $f\in F_{p(\cdot),q_1(\cdot)}^{\alpha_{1}}$, then
\begin{eqnarray*}
\left\|t^{k-\alpha_{2}}\left|\frac{\partial^{k}P_{t}f(x)}{\partial
t^{k}}\right|\right\|_{q_{2}(\cdot),\mu}
&\leq&\left\|t^{k-\alpha_{2}}\left|\frac{\partial^{k}P_{t}f(x)}{\partial
t^{k}}\right|\chi_{(0,1]}\right\|_{q_{2}(\cdot),\mu}\\
&&\hspace{1.5cm}
+ \left\|t^{k-\alpha_{2}}\left|\frac{\partial^{k}P_{t}f(x)}{\partial
t^{k}}\right|\chi_{(1,\infty)}\right\|_{q_{2}(\cdot),\mu}\\
&=& (I) + (II).
\end{eqnarray*}
Now, since $q_{1}(t)>q_{2}(t)$ a.e., by taking  $r(t)=\displaystyle\frac{q_{1}(t)q_{2}(t)}{q_{1}(t)-q_{2}(t)}$,
 we obtain that $r(\cdot)\geq 1$ and
$\displaystyle\frac{1}{r(\cdot)}+\frac{1}{q_{1}(\cdot)}=\frac{1}{q_{2}(\cdot)}$, thus, by
H\"older's inequality (\ref{Holder generalizada}) and Lemma \ref{desigualdadesnormamu}
\begin{eqnarray*}
(I)&=&\left\|t^{\alpha_{1}-\alpha_{2}}\chi_{(0,1]}t^{k-\alpha_{1}}\left|\frac{\partial^{k}P_{t}f(x)}{\partial
t^{k}}\right|\right\|_{q_{2}(\cdot),\mu}\\
&\leq&2\left\|
t^{\alpha_{1}-\alpha_{2}}\chi_{(0,1]}\right\|_{r(\cdot),\mu}
\left\|t^{k-\alpha_{1}}\left|\frac{\partial^{k}P_{t}f(x)}{\partial
t^{k}}\right|\right\|_{q_{1}(\cdot),\mu}\\
&=&C_{\alpha_{1},\alpha_{2},q_{1}(\cdot),q_{2}(\cdot)}\left\|t^{k-\alpha_{1}}\left|\frac{\partial^{k}P_{t}f(x)}{\partial
t^{k}}\right|\right\|_{q_{1}(\cdot),\mu}.
\end{eqnarray*}
Now, for the second term $(II)$, by using Lemmas
\ref{desigualdadesnormamu} and \ref{lematec5}, we get
\begin{eqnarray*}
(II)&=&\left\|t^{k-\alpha_{2}}\left|\frac{\partial^{k}P_{t}f(x)}{\partial
t^{k}}\right|\chi_{(1,\infty)}\right\|_{q_{2}(\cdot),\mu}\leq C_{k}T^{\ast}f(x) \left\|\chi_{(1,\infty)}t^{k-\alpha_{2}}t^{-k}\right\|_{q_{2}(\cdot),\mu}\\
&=&C_{k} T^{\ast}f(x)\left\|\chi_{(1,\infty)} t^{-\alpha_{2}}\right\|_{q_{2}(\cdot),\mu}=
C_{k,\alpha_{2},q_{2}(\cdot)}T^{\ast}f(x).\end{eqnarray*} Then, by using the
$L^{p(\cdot)}(\gamma_d)$ boundedness of $T^{\ast}$ (see \cite{MorPinUrb}),
\begin{eqnarray*}
\left\|\left\|t^{k-\alpha_{2}}\left|\frac{\partial^{k}P_{t}f}{\partial
t^{k}}\right|\right\|_{q_{2}(\cdot),\mu}\right\|_{p(\cdot),\gamma_d}&\leq&
C_{\alpha_{1},\alpha_{2},q_{1}(\cdot),q_{2}(\cdot)}\left\|\left\|t^{k-\alpha_{1}}\left|\frac{\partial^{k}P_{t}f}{\partial
t^{k}}\right|\right\|_{q_{1}(\cdot),\mu}\right\|_{p(\cdot),\gamma_d}\\
&+&C_{k,\alpha_{2},q_{2}(\cdot)} \|T^{\ast}f\|_{p(\cdot),\gamma_d}\\
&\leq&C_{\alpha_{1},\alpha_{2},q_{1}(\cdot),q_{2}(\cdot)}
\left\|\left\|t^{k-\alpha_{1}}\left|\frac{\partial^{k}P_{t}f}{\partial
t^{k}}\right|\right\|_{q_{1}(\cdot),\mu}\right\|_{p(\cdot),\gamma_d}\\
&+&C_{k,\alpha_{2},p(\cdot),q_{2}(\cdot)} \|f\|_{p(\cdot),\gamma_d}<+\infty \quad .
\end{eqnarray*}
Therefore, $f\in F_{p(\cdot),q_{2}(\cdot)}^{\alpha_{2}}.$ \\
\end{proof}

\section{Interpolation results}

Finally, we are going to consider some interpolation results for the Gaussian variable Besov-Lipschitz and the variable Triebel-Lizorkin spaces.\\

We will use the following results for general variable Lebesgue spaces $L^{p(\cdot)}(X,\nu)$.
\begin{lem}\label{lema3.2.6harjhuleto}
Let $p(\cdot)\in \mathcal{P}(\Omega,\nu)$ and $s>0$ such that $sp^{-}\geq 1$. Then

$\displaystyle\||f|^{s}\|_{p(\cdot),\nu}=\|f\|^{s}_{sp(\cdot),\nu}$.
\end{lem}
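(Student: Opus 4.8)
The plan is to read the identity straight off the Luxemburg-norm definition, the only ingredient being the homogeneity of the modular under the map $f\mapsto|f|^{s}$ together with the dilation $\lambda\mapsto\lambda^{s}$. First I would record two preliminary remarks: the hypothesis $sp^{-}\ge 1$ is exactly what guarantees that $sp(\cdot)$ is again an admissible exponent, $sp(\cdot)\in\mathcal{P}(\Omega,\nu)$, so that $\|\cdot\|_{sp(\cdot),\nu}$ is defined; and $\{x\in\Omega:sp(x)=\infty\}=\{x\in\Omega:p(x)=\infty\}=\Omega_{\infty}$, so the splitting of $\Omega$ used in the definition of the modular is the same for both exponents.

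The heart of the matter is the elementary scaling equivalence: for every $\nu$-measurable $f$ and every $\lambda>0$,
$$\rho_{sp(\cdot),\nu}\!\left(\frac{f}{\lambda}\right)\le 1\quad\Longleftrightarrow\quad\rho_{p(\cdot),\nu}\!\left(\frac{|f|^{s}}{\lambda^{s}}\right)\le 1 .$$
Off $\Omega_{\infty}$ this follows at once from the pointwise identity $\bigl(|f(x)|/\lambda\bigr)^{sp(x)}=\bigl(|f(x)|^{s}/\lambda^{s}\bigr)^{p(x)}$, while on $\Omega_{\infty}$ it reduces to $\|f/\lambda\|_{L^{\infty}(\Omega_{\infty},\nu)}\le 1\Leftrightarrow\||f|^{s}/\lambda^{s}\|_{L^{\infty}(\Omega_{\infty},\nu)}\le 1$. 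This is the one genuinely computational step, and it is routine.

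Granting this, I would substitute $\mu=\lambda^{s}$ — an increasing bijection of $(0,\infty)$ onto itself — in
$$\|f\|_{sp(\cdot),\nu}=\inf\bigl\{\lambda>0:\rho_{sp(\cdot),\nu}(f/\lambda)\le 1\bigr\} .$$
By the scaling equivalence, raising the admissible set of $\lambda$'s to the $s$-th power produces exactly $\{\mu>0:\rho_{p(\cdot),\nu}(|f|^{s}/\mu)\le 1\}$, whose infimum is $\||f|^{s}\|_{p(\cdot),\nu}$ by definition. Since $t\mapsto t^{s}$ is order preserving, the infima correspond, and we obtain $\|f\|_{sp(\cdot),\nu}^{s}=\||f|^{s}\|_{p(\cdot),\nu}$; the degenerate cases in which the norm equals $0$ or $+\infty$ are read off from the same computation.

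I expect no real obstacle here: the content is entirely bookkeeping — checking $sp(\cdot)\in\mathcal{P}(\Omega,\nu)$ (this is where $sp^{-}\ge 1$ enters), noting that $\Omega_{\infty}$ is shared by the two exponents, and using that the dilation $\mu=\lambda^{s}$ is monotone so that the defining infima are matched. No analytic difficulty arises; the only thing to be careful about is the modular scaling on $\Omega_{\infty}$.
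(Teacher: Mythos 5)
Your core scaling computation is indeed the standard proof (the paper itself gives no argument, it just cites Lemma 3.2.6 of Diening--Harjulehto--H\"ast\"o--R\.{u}\v{z}i\v{c}ka): on the set where $p$ is finite, $(|f(x)|/\lambda)^{sp(x)}=(|f(x)|^{s}/\lambda^{s})^{p(x)}$ pointwise, so the two modulars coincide there, the substitution $\mu=\lambda^{s}$ identifies the two admissible sets, and the Luxemburg infima match. In particular, whenever $\nu(\Omega_{\infty})=0$ --- e.g.\ when $p_{+}<\infty$, which is the case in every application of this lemma in the paper (the interpolation theorem assumes $p_{j}^{+},q_{j}^{+}<\infty$) --- your proof is complete and correct.

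The gap is in your treatment of $\Omega_{\infty}$. With the modular used in this paper, $\rho_{p(\cdot),\nu}(g)=\int_{\Omega\setminus\Omega_{\infty}}|g|^{p(x)}\,d\nu+\|g\|_{L^{\infty}(\Omega_{\infty},\nu)}$, the condition $\rho\leq 1$ constrains the \emph{sum} of the two contributions, so you cannot check it separately off and on $\Omega_{\infty}$ as you propose. Writing $A(\lambda)=\int_{\Omega\setminus\Omega_{\infty}}(|f|/\lambda)^{sp(x)}d\nu$ and $B(\lambda)=\|f/\lambda\|_{L^{\infty}(\Omega_{\infty},\nu)}$, one has $\rho_{sp(\cdot),\nu}(f/\lambda)=A(\lambda)+B(\lambda)$ but $\rho_{p(\cdot),\nu}(|f|^{s}/\lambda^{s})=A(\lambda)+B(\lambda)^{s}$, and ``$A+B\leq 1$'' is not equivalent to ``$A+B^{s}\leq 1$''. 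Concretely, take $s=2$, $p\equiv 1$ on a set of $\nu$-measure one, $p\equiv\infty$ on a further set of positive measure, and $f\equiv 1$: then $\|f\|_{2p(\cdot),\nu}$ is the root of $\lambda^{-2}+\lambda^{-1}=1$, so $\|f\|_{2p(\cdot),\nu}^{2}=(3+\sqrt{5})/2\approx 2.618$, whereas $\||f|^{2}\|_{p(\cdot),\nu}=2$. So your claimed equivalence --- and in fact the stated identity itself, under this paper's modular convention --- fails when $\nu(\Omega_{\infty})>0$. The repair is either to add the hypothesis $\nu(\Omega_{\infty})=0$ (harmless for all uses in this paper), or to work with the convention of Diening et al., where the contribution of $\Omega_{\infty}$ to the modular is $0$ if $|g|\leq 1$ $\nu$-a.e.\ there and $+\infty$ otherwise; with that convention the $\Omega_{\infty}$ condition decouples from the integral part ($|f|\leq\lambda$ a.e.\ iff $|f|^{s}\leq\lambda^{s}$ a.e.) and your argument goes through verbatim.
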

\begin{proof} It is the same proof of Lemma 3.2.6 in \cite {LibroDenHarjHas}.
\end{proof}

\begin{lem}\label{holderinterpolacion}
Let $\nu$ a complete $\sigma$-finite measure on $X$. $r_{j}(\cdot)\in \mathcal{P}(X,\nu)$, $1<r^{-}_{j},r^{+}_{j}<\infty$,$j=0,1$. For all $0<\lambda<1$, if $f\in L^{r_{j}(\cdot)}(X,\nu)$, $j=0,1$ then $f\in
L^{r(\cdot)}(X,\nu)$ where
$\displaystyle\frac{1}{r(y)}=\frac{1-\lambda}{r_{0}(y)}+\frac{\lambda}{r_{1}(y)},$ a.e. $y\in X$ and \begin{equation}
\|f\|_{r(\cdot),\nu}\leq 2\|f\|_{r_{0}(\cdot),\nu}^{1-\lambda}\|f\|_{r_{1}(\cdot),\nu}^{\lambda}.
\end{equation}
\end{lem}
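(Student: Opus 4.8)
The plan is to factor $|f| = |f|^{1-\lambda}\,|f|^{\lambda}$ and combine the generalized H\"older inequality (\ref{Holder generalizada}) with the power identity of Lemma \ref{lema3.2.6harjhuleto}. First I would introduce the auxiliary exponents $q_{0}(\cdot) = \frac{r_{0}(\cdot)}{1-\lambda}$ and $q_{1}(\cdot) = \frac{r_{1}(\cdot)}{\lambda}$. Since $r_{j}^{-} > 1$ and $0 < \lambda < 1$, one checks at once that $q_{0}(\cdot), q_{1}(\cdot)\colon X \to [1,\infty]$, so they belong to $\mathcal{P}(X,\nu)$; moreover $r(\cdot) \in \mathcal{P}(X,\nu)$ because $\frac{1}{r(\cdot)} = \frac{1-\lambda}{r_{0}(\cdot)} + \frac{\lambda}{r_{1}(\cdot)} \le (1-\lambda) + \lambda = 1$. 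By construction $\frac{1}{r(\cdot)} = \frac{1}{q_{0}(\cdot)} + \frac{1}{q_{1}(\cdot)}$ $\nu$-a.e.

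Next I would apply Lemma \ref{lema3.2.6harjhuleto} with $s = 1-\lambda$ and exponent $q_{0}(\cdot)$: the hypothesis $s\,q_{0}^{-} = (1-\lambda)\cdot\frac{r_{0}^{-}}{1-\lambda} = r_{0}^{-} \ge 1$ is satisfied, so $\bigl\|\,|f|^{1-\lambda}\,\bigr\|_{q_{0}(\cdot),\nu} = \|f\|_{r_{0}(\cdot),\nu}^{\,1-\lambda}$; in particular $|f|^{1-\lambda} \in L^{q_{0}(\cdot)}(X,\nu)$ whenever $f \in L^{r_{0}(\cdot)}(X,\nu)$. Symmetrically, with $s = \lambda$ and exponent $q_{1}(\cdot)$, using $\lambda\,q_{1}^{-} = r_{1}^{-} \ge 1$, one obtains $\bigl\|\,|f|^{\lambda}\,\bigr\|_{q_{1}(\cdot),\nu} = \|f\|_{r_{1}(\cdot),\nu}^{\,\lambda}$ and $|f|^{\lambda} \in L^{q_{1}(\cdot)}(X,\nu)$.

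Finally I would invoke H\"older's inequality (\ref{Holder generalizada}) for the pair $q_{0}(\cdot), q_{1}(\cdot)$ (whose harmonic sum is $r(\cdot)$) applied to the product $|f|^{1-\lambda}\cdot|f|^{\lambda}$, which gives
\[
\|f\|_{r(\cdot),\nu} = \bigl\|\,|f|^{1-\lambda}\,|f|^{\lambda}\,\bigr\|_{r(\cdot),\nu} \le 2\,\bigl\|\,|f|^{1-\lambda}\,\bigr\|_{q_{0}(\cdot),\nu}\,\bigl\|\,|f|^{\lambda}\,\bigr\|_{q_{1}(\cdot),\nu} = 2\,\|f\|_{r_{0}(\cdot),\nu}^{\,1-\lambda}\,\|f\|_{r_{1}(\cdot),\nu}^{\,\lambda}.
\]
Since the right-hand side is finite whenever $f\in L^{r_{0}(\cdot)}(X,\nu)\cap L^{r_{1}(\cdot)}(X,\nu)$, this simultaneously yields $f\in L^{r(\cdot)}(X,\nu)$ and the claimed estimate. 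The only genuine care required — the ``hard part,'' such as it is — lies in checking that $q_{0}(\cdot), q_{1}(\cdot)$ are admissible exponent functions and that the power conditions $s\,q_{j}^{-}\ge 1$ hold so that Lemma \ref{lema3.2.6harjhuleto} applies; this is precisely where the hypothesis $1 < r_{j}^{-}\le r_{j}^{+} < \infty$ enters, the finiteness of $r_{j}^{+}$ ensuring that $|f|^{1-\lambda}$ and $|f|^{\lambda}$ lie in honest variable Lebesgue spaces.
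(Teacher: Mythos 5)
Your proof is correct and follows exactly the route the paper intends: the paper's proof is a one-line citation of H\"older's inequality (\ref{Holder generalizada}) together with Lemma \ref{lema3.2.6harjhuleto}, and your argument simply fills in those details (the factorization $|f|=|f|^{1-\lambda}|f|^{\lambda}$, the auxiliary exponents $r_0(\cdot)/(1-\lambda)$ and $r_1(\cdot)/\lambda$, and the verification that the power lemma applies). No discrepancies to report.
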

\begin{proof} It is a consequence of H\"older's inequality (\ref{Holder generalizada}) and Lemma \ref{lema3.2.6harjhuleto}. \end{proof}

Now, we present the interpolation result.
\begin{teo}
Let $p_{j}(\cdot)\in \mathcal{P}(\mathbb{R}^{d},\gamma_{d})$ and $q_{j}\in \mathcal{P}(\mathbb{R}^{+},\mu), j=0,1$
\begin{enumerate}
\item [i)] For $1<p^{-}_{j},q^{-}_{j}$, $p^{+}_{j},q^{+}_{j}<+\infty$ and $\alpha_{j}\geq 0$, if
$f\in B_{p_{j}(\cdot),q_{j}(\cdot)}^{\alpha_{j}}(\gamma_d)$, $j=0,1,$ then
for all $0<\theta<1, f\in B_{p(\cdot),q(\cdot)}^{\alpha}(\gamma_d)$, where
$$\alpha=\alpha_{0}(1-\theta)+\alpha_{1}\theta$$
 and
$$\displaystyle\frac{1}{p(x)}=\frac{1-\theta}{p_{0}(x)}+\frac{\theta}{p_{1}(x)},\, \text{a.e.} \; x\in \mathbb{R}^{d},$$
$$\displaystyle\frac{1}{q(t)}=\frac{1-\theta}{q_{0}(t)}+\frac{\theta}{q_{1}(t)},\, \text{a.e.} \; t\in \mathbb{R}^{+}.$$

\item[ii)]  For $1<p^{-}_{j},q^{-}_{j}$, $p^{+}_{j},q^{+}_{j}<+\infty$ and $\alpha_{j}\geq 0$, if
$f\in F_{p_{j}(\cdot),q_{j}(\cdot)}^{\alpha_{j}}(\gamma_d)$, $j=0,1,$ then for all $0<\theta<1, f\in F_{p(\cdot),q(\cdot)}^{\alpha}(\gamma_d)$, where
$$\alpha=\alpha_{0}(1-\theta)+\alpha_{1}\theta,$$
and
$$\displaystyle\frac{1}{p(x)}=\frac{1-\theta}{p_{0}(x)}+\frac{\theta}{p_{1}(x)},\, \text{a.e.}  \; x\in \mathbb{R}^{d},$$
$$\displaystyle\frac{1}{q(t)}=\frac{1-\theta}{q_{0}(t)}+\frac{\theta}{q_{1}(t)},\, \text{a.e.} \; t \in \mathbb{R}^{+}.$$
\end{enumerate}
\end{teo}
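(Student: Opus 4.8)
The plan is to reduce both statements to the interpolation inequality for variable Lebesgue spaces provided by Lemma~\ref{holderinterpolacion}, applied once in the Gaussian variable $x$ (with the exponents $p_j(\cdot)$) and once in the $t$-variable (with the exponents $q_j(\cdot)$, with respect to the measure $\mu$). Fix $0<\theta<1$ and let $k$ be the smallest integer larger than $\alpha=\alpha_0(1-\theta)+\alpha_1\theta$; note that a single $k$ may be used for all three spaces $B^{\alpha_0}_{p_0(\cdot),q_0(\cdot)}$, $B^{\alpha_1}_{p_1(\cdot),q_1(\cdot)}$, $B^{\alpha}_{p(\cdot),q(\cdot)}$ by the independence-of-$k$ Lemmas~\ref{lematec3}, \ref{lematec4} and \ref{TLind} (choose $k$ larger than $\alpha_0,\alpha_1,\alpha$ and use the equivalence of norms). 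Write $g(x,t)=t^{k}\bigl|\tfrac{\partial^{k}}{\partial t^{k}}P_t f(x)\bigr|$, so that the defining quantity for $B^{\alpha_j}_{p_j(\cdot),q_j(\cdot)}$ is $\bigl\|\,\|t^{-\alpha_j}g(x,t)\|_{q_j(\cdot),\mu}\,\bigr\|_{p_j(\cdot),\gamma_d}$ and similarly with the order of the norms swapped for the Triebel--Lizorkin case; also recall $f\in L^{p_0(\cdot)}(\gamma_d)\cap L^{p_1(\cdot)}(\gamma_d)$ implies $f\in L^{p(\cdot)}(\gamma_d)$ with the bound from Lemma~\ref{holderinterpolacion}, which handles the first term $\|f\|_{p(\cdot),\gamma_d}$ of each norm.

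For part (i) (Besov--Lipschitz), first work at fixed $x$ in the inner $q(\cdot),\mu$-norm. Since $\tfrac1{q(t)}=\tfrac{1-\theta}{q_0(t)}+\tfrac{\theta}{q_1(t)}$ and $t^{-\alpha}=(t^{-\alpha_0})^{1-\theta}(t^{-\alpha_1})^{\theta}$, the function $t\mapsto t^{-\alpha}g(x,t)$ factors as $\bigl(t^{-\alpha_0}g(x,t)\bigr)^{1-\theta}\bigl(t^{-\alpha_1}g(x,t)\bigr)^{\theta}$, so Lemma~\ref{holderinterpolacion} (applied on $X=\mathbb R^{+}$ with measure $\mu$) gives
\[
\bigl\|t^{-\alpha}g(x,\cdot)\bigr\|_{q(\cdot),\mu}\le 2\,\bigl\|t^{-\alpha_0}g(x,\cdot)\bigr\|_{q_0(\cdot),\mu}^{\,1-\theta}\,\bigl\|t^{-\alpha_1}g(x,\cdot)\bigr\|_{q_1(\cdot),\mu}^{\,\theta}.
\]
Denote $G_j(x)=\|t^{-\alpha_j}g(x,\cdot)\|_{q_j(\cdot),\mu}$; the above says $G(x):=\|t^{-\alpha}g(x,\cdot)\|_{q(\cdot),\mu}\le 2\,G_0(x)^{1-\theta}G_1(x)^{\theta}$ pointwise in $x$. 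Now take the $\gamma_d$-norm in $x$: by monotonicity of the variable Lebesgue norm and a second application of Lemma~\ref{holderinterpolacion} on $\mathbb R^d$ with the measure $\gamma_d$ and the exponents $p_j(\cdot)$ (legitimate because $1<p^-_j\le p^+_j<\infty$ and $\tfrac1{p(x)}=\tfrac{1-\theta}{p_0(x)}+\tfrac{\theta}{p_1(x)}$), applied to the function whose $(1-\theta)$- and $\theta$-powers are $G_0,G_1$,
\[
\|G\|_{p(\cdot),\gamma_d}\le 2\,\|G_0^{1-\theta}G_1^{\theta}\|_{p(\cdot),\gamma_d}\le 4\,\|G_0\|_{p_0(\cdot),\gamma_d}^{\,1-\theta}\,\|G_1\|_{p_1(\cdot),\gamma_d}^{\,\theta}<\infty,
\]
the finiteness being exactly the hypothesis $f\in B^{\alpha_j}_{p_j(\cdot),q_j(\cdot)}$. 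Hence $f\in B^{\alpha}_{p(\cdot),q(\cdot)}$, and in fact $\|f\|_{B^{\alpha}_{p(\cdot),q(\cdot)}}\le C\,\|f\|_{B^{\alpha_0}_{p_0(\cdot),q_0(\cdot)}}^{1-\theta}\|f\|_{B^{\alpha_1}_{p_1(\cdot),q_1(\cdot)}}^{\theta}$ after also interpolating $\|f\|_{p(\cdot),\gamma_d}$. Part (ii) is identical with the roles of the two norms exchanged: first interpolate the outer $q(\cdot),\mu$-norm after having the pointwise-in-$x$ factorization inside the $L^{p(\cdot)}(\gamma_d)$-norm — more precisely, write $H_j=\|t^{-\alpha_j}g(\cdot,t)\|_{q_j(\cdot),\mu}$ as a function of $x$ only after, at each $x$, splitting $t^{-\alpha}g(x,t)$ as the product of powers and invoking Lemma~\ref{holderinterpolacion} in $t$; then interpolate in $x$ exactly as above. (Equivalently one may first apply Lemma~\ref{holderinterpolacion} in $x$ to the function $t\mapsto$ the $p(\cdot)$-interpolant, then in $t$; the two orders give the same conclusion up to the constant $16$.)

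The one genuinely delicate point is the legitimacy of applying Lemma~\ref{holderinterpolacion} on $(\mathbb R^+,\mu)$ with the exponents $q_j(\cdot)$: that lemma requires $1<q_j^-\le q_j^+<\infty$, which is precisely why the theorem hypothesizes $1<p_j^-,q_j^-$ and $p_j^+,q_j^+<\infty$ rather than the weaker $\mathcal P_{0,\infty}$ membership used in the definition of the spaces. Beyond that, the only care needed is bookkeeping: checking that $t\mapsto t^{-\alpha_j}g(x,t)$ is $\mu$-measurable and $x\mapsto G_j(x)$ is $\gamma_d$-measurable (standard, since $(x,t)\mapsto \frac{\partial^k}{\partial t^k}P_t f(x)$ is jointly measurable, e.g. by the integral representation~(\ref{02}) and Lemma~\ref{lematec1}), and assembling the constants. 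I expect no other obstacle; everything reduces cleanly to two successive uses of the variable-exponent Hölder interpolation inequality, once in each variable.
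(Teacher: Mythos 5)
Your proposal is correct and follows essentially the same route as the paper: interpolate once in each variable, combining Lemma \ref{holderinterpolacion} with H\"older's inequality (\ref{Holder generalizada}) and the power rule of Lemma \ref{lema3.2.6harjhuleto}, and handling the term $\|f\|_{p(\cdot),\gamma_d}$ by the same lemma. Two small slips to fix in the write-up: you have the two norm-orders interchanged (the quantity $\left\|\left\|t^{-\alpha}g(x,\cdot)\right\|_{q(\cdot),\mu}\right\|_{p(\cdot),\gamma_d}$ that you attach to $B^{\alpha}_{p(\cdot),q(\cdot)}$ is in fact the Triebel--Lizorkin quantity, so your detailed argument proves part (ii) and your sketched ``part (ii)'' is really part (i)), and in the steps where the two factors are different functions (e.g. $t^{-\alpha_0}g$ versus $t^{-\alpha_1}g$, or $G_0$ versus $G_1$) you need H\"older (\ref{Holder generalizada}) together with Lemma \ref{lema3.2.6harjhuleto} rather than Lemma \ref{holderinterpolacion} verbatim, which is stated for a single function measured in three norms --- exactly the combination the paper uses.
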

\begin{proof}

\begin{enumerate}
\item[i)] Let $k$ be any integer greater than $\alpha_{0}$ and
$\alpha_{1}$, by using Lemma \ref{holderinterpolacion}, we obtain for $\alpha =\alpha_0 (1- \theta) +\alpha_1 \theta$,
$$
\left\|t^{k-\alpha}\left\|\frac{\partial^{k}P_{t}f}{\partial
t^{k}}\right\|_{p(\cdot),\gamma_d}\right\|_{q(\cdot),\mu}\hspace{5cm}.$$
\begin{eqnarray*}
&\leq&\left\|t^{k-(\alpha_0 (1- \theta) +\alpha_1
\theta)}2\left\|\frac{\partial^{k}P_{t}f}{\partial
t^{k}}\right\|_{p_{0}(\cdot),\gamma_d}^{1-\theta}\left\|\frac{\partial^{k}P_{t}f}{\partial
t^{k}}\right\|_{p_{1}(\cdot),\gamma_d}^{\theta}\right\|_{q(\cdot),\mu} \\
&=&2\left\|t^{(1-\theta)(k-\alpha_{0})+\theta(k-\alpha_{1})}\left\|\frac{\partial^{k}P_{t}f}{\partial
t^{k}}\right\|_{p_{0}(\cdot),\gamma_d}^{1-\theta}\left\|\frac{\partial^{k}P_{t}f}{\partial
t^{k}}\right\|_{p_{1}(\cdot),\gamma_d}^{\theta}\right\|_{q(\cdot),\mu}\\
&=&2\left\|\left(t^{k-\alpha_{0}}\left\|\frac{\partial^{k}P_{t}f}{\partial
t^{k}}\right\|_{p_{0}(\cdot),\gamma_d}\right)^{1-\theta}\left(t^{k-\alpha_{1}}\left\|\frac{\partial^{k}P_{t}f}{\partial
t^{k}}\right\|_{p_{1}(\cdot),\gamma_d}\right)^{\theta }\right\|_{q(\cdot),\mu}.
\end{eqnarray*}
Thus, by H\"older's inequality (\ref{Holder generalizada}) and Lemma \ref{lema3.2.6harjhuleto},
\begin{eqnarray*}
&&\left\|t^{k-\alpha}\left\|\frac{\partial^{k}P_{t}f}{\partial
t^{k} }\right\|_{p(\cdot),\gamma_d}\right\|_{q(\cdot),\mu}\\
&\leq&4\left\|t^{k-\alpha_{0}}\left\|\frac{\partial^{k}P_{t}f}{\partial
t^{k}}\right\|_{p_{0}(\cdot),\gamma_d}\right\|^{1-\theta}_{q_{0}(\cdot),\mu}\left\|t^{k-\alpha_{1}}\left\|\frac{\partial^{k}P_{t}f}{\partial
t^{k}}\right\|_{p_{1}(\cdot),\gamma_d}\right\|^{\theta}_{q_{1}(\cdot),\mu}<+\infty ,
\end{eqnarray*}
that is, $f\in B_{p(\cdot),q(\cdot)}^{\alpha}(\gamma_d)$.\\

\item[ii)] Analogously, by H\"older's inequality (\ref{Holder generalizada}) and Lemma \ref{lema3.2.6harjhuleto}, we obtain for $\alpha = \alpha_0 (1- \theta) +\alpha_1
\theta$,
\begin{eqnarray*}
\left\|t^{k-\alpha}\left|\frac{\partial^{k}P_{t}f(x)}{\partial
t^{k} }\right|\right\|_{q(\cdot),\mu}&=&\left\|\left(t^{k-\alpha_{0}}
\left|\frac{\partial^{k}P_{t}f(x)}{\partial t^{k}
}\right|\right)^{1-\theta}\left(t^{k-\alpha_{1}}\left|\frac{\partial^{k}P_{t}f(x)}{\partial
t^{k}}\right|\right)^{\theta}\right\|_{q(\cdot),\mu}\\
&\leq&2\left\|t^{k-\alpha_{0}}
\left|\frac{\partial^{k}P_{t}f(x)}{\partial
t^{k}}\right|\right\|^{1-\theta}_{q_{0}(\cdot),\mu}\left\|t^{k-\alpha_{1}}\left|\frac{\partial^{k}P_{t}f(x)}{\partial
t^{k} }\right|\right\|^{\theta}_{q_{1}(\cdot),\mu},
\end{eqnarray*}
a.e. $x\in \mathbb{R}^{d}.$
Therefore
\begin{eqnarray*}
\left\|\left\|t^{k-\alpha}\left|\frac{\partial^{k}P_{t}f}{\partial
t^{k}}\right|\right\|_{q(\cdot),\mu}\right\|_{p(\cdot),\gamma_d}&\leq&2
\left\|\left\|t^{k-\alpha_{0}}
\left|\frac{\partial^{k}P_{t}f}{\partial
t^{k}}\right|\right\|^{1-\theta}_{q_{0}(\cdot),\mu}\left\|t^{k-\alpha_{1}}\left|\frac{\partial^{k}P_{t}f}{\partial
t^{k} }\right|\right\|^{\theta}_{q_{1}(\cdot),\mu}\right\|_{p(\cdot),\gamma_d},
\end{eqnarray*}
and again by H\"older's inequality and Lemma \ref{lema3.2.6harjhuleto},
\begin{eqnarray*}
&&\left\|\left\|t^{k-\alpha}\left|\frac{\partial^{k} P_{t}f}{\partial
t^{k}}\right|\right\|_{q(\cdot),\mu}\right\|_{p(\cdot),\gamma_{d}}\\
&\leq&4 \left\|\left\|t^{k-\alpha_{0}}
\left|\frac{\partial^{k}P_{t}f}{\partial t^{k}}\right|\right\|_{q_{0}(\cdot),\mu}\right\|^{1-\theta}_{p_{0}(\cdot),\gamma_{d}}\left\|\left\|t^{k-\alpha_{1}}\left|\frac{\partial^{k}P_{t}f}{\partial
t^{k} }\right|\right\|_{q_{1}(\cdot),\mu}\right\|^{\theta}_{p_{1}(\cdot),\gamma_{d}}<+\infty .
\end{eqnarray*}
That is, $f\in F_{p(\cdot),q(\cdot)}^{\alpha}(\gamma_d)$.
\end{enumerate}
\end{proof}

\end{document}